\newtheorem{theorem}{Theorem}[section]
\newtheorem{cor}[theorem]{Corollary}
\newtheorem{lem}[theorem]{Lemma}
\newtheorem{prop}[theorem]{Proposition}
\theoremstyle{definition}
\newtheorem{defn}[theorem]{Definition}
\newtheorem{rem}[theorem]{Remark}
\newtheorem{example}[theorem]{Example}
\numberwithin{equation}{section}
\newcommand{\N}{\mathbb{N}}
\newcommand{\R}{\mathbb{R}}
\newcommand{\T}{\mathbb{T}}
\newcommand{\Z}{\mathbb{Z}}
\newcommand{\Q}{\mathbb{Q}}
\newcommand{\C}{\mathbb{C}}
\newcommand{\B}{\mathbb{B}}
\newcommand{\mS}{\mathbb{S}}
\newcommand{\cB}{\mathcal{B}}
\newcommand{\cD}{\mathcal{D}}
\newcommand{\cE}{\mathcal{E}}
\newcommand{\cG}{\mathcal{G}}
\newcommand{\cI}{\mathcal{I}}
\newcommand{\cJ}{\mathcal{J}}
\newcommand{\cN}{\mathcal{N}}
\newcommand{\cP}{\mathcal{P}}
\newcommand{\cR}{\mathcal{R}}
\newcommand{\cT}{\mathcal{T}}
\newcommand{\cO}{\mathcal{O}}
\newcommand{\cZ}{\mathcal{Z}}
\newcommand{\cW}{\mathcal{W}}
\newcommand{\cU}{\mathcal{U}}
\begin{document}

\title{A characterization of Benford's Law\\in discrete-time linear
  systems}

\author{Arno Berger and Gideon Eshun\\[2mm] Mathematical and
    Statistical Sciences\\University of Alberta\\Edmonton, Alberta,
  {\sc Canada}}

\maketitle


%
\begin{abstract}
A necessary and sufficient condition (``nonresonance'') is
  established for every solution of an autonomous linear difference
  equation, or more generally for every sequence $(x^\top A^n y)$ with
  $x,y\in \R^d$ and $A\in \R^{d\times d}$, to be either trivial or
  else conform to a strong form of Benford's Law (logarithmic
  distribution of significands). This condition contains all pertinent
  results in the literature as special cases. Its number-theoretical
  implications are discussed in the context of
  specific examples, and so are its possible extensions and
  modifications.
\end{abstract}
\hspace*{8.6mm}{\small {\bf Keywords.} Benford sequence, uniform distribution
mod $1$, $\Q$-independence,}\\[-1mm]
\hspace*{27.5mm}{\small nonresonant set.}

\noindent
\hspace*{8.6mm}{\small {\bf MSC2010.} 37A05, 37A45, 11J71, 62E20.}

\medskip

\section{Introduction}

The study of digits generated by dynamical processes is a
classical subject that continues to attract interest
from many disciplines, including ergodic and number theory
\cite{ARS,CK,DK,KM, LagSou}, analysis \cite{BumEll, MasSch} and
statistics \cite{diek,GW,MiNi}. A recurring theme across the disciplines is the surprising ubiquity of a
logarithmic distribution of digits often referred to as {\em Benford's Law\/}
(BL). The most well-known special case of BL is the so-called ({\em decimal\/})
{\em first-digit law\/} which asserts that
\begin{equation}\label{eq1}
\mathbb{P} (\mbox{\em leading digit}_{10}\, = d_1) = \log_{10} \left( 1 +
  d_1^{-1} \right) \, , \quad \forall d_1 = 1 , \ldots , 9 \, ,
\end{equation}
where {\em leading digit}$_{10}$ refers to the leading (or first
significant) decimal digit, and $\log_{10}$ is
the base-$10$ logarithm (see Section \ref{sec2} for rigorous definitions); for example, the leading decimal
digit of $e =2.718$ is $2$, whereas the leading digit of $-e^{e}=-15.15$ is
$1$. Note that (\ref{eq1}) is heavily skewed towards the smaller
digits: For instance, the leading decimal digit is almost seven
times as likely to equal $1$ (probability $\log_{10} 2=30.10$\%) as it is
to equal $9$ (probability $1- \log_{10} 9 =4.57$\%).

Ever since first recorded by Newcomb \cite{newcomb} in 1881
and re-discovered by Benford \cite{benford} in 1938, examples of
data and systems conforming to (\ref{eq1}) in one form or another
have been discussed extensively, for instance in real-life data (e.g.\
\cite{doC, sam}), stochastic processes (e.g.\ \cite{schuerg}) and deterministic
sequences (e.g.\ $(n!)$ and the prime numbers \cite{Dia}). There now
exists a large body of literature devoted to the mechanisms whereby
mathematical objects, such as e.g.\ sequences or random variables, do
or do not satisfy (\ref{eq1}) or variants thereof. As of this writing, an online database \cite{BOB} devoted exclusively to BL lists more
than 800 references.

Due to their important role as elementary models throughout science,
{\em linear difference equations\/} have, from very early on, been studied for their conformance to (\ref{eq1}). A simple but
prominent case in point is the sequence $(x_n) =(1,1,2,3,5,\ldots )$ of Fibonacci
numbers, which has long been known \cite{BD, duncan, kuipers,wlod}  to
conform to (\ref{eq1}) in the sense that
\begin{equation}\label{eq2}
\lim\nolimits_{N\to \infty} \frac{\# \{ n \le N : \mbox{\em leading
    digit}_{10} ( x_n) = d_1 \}}{N} = \log_{10} (1+d_1^{-1}) \, , \quad \forall d_1 =
1,\ldots 9 \, .
\end{equation}
Recall that $(x_n)$ is a solution of a (very simple) {\em
  autonomous linear difference equation}, namely $x_n =
x_{n-1}+x_{n-2}$ for all $n\ge 3$.
This article provides a comprehensive theory of BL for such
equations. Specifically, the central question addressed (and answered) herein is this: Given $d\in \N$ and real numbers
$a_1, a_2, \ldots, a_{d-1}, a_d$ with $a_d\ne 0$, consider the (autonomous,
$d$-th order) linear difference equation
\begin{equation}\label{eq3}
x_n = a_1 x_{n-1} + a_2 x_{n-2}+ \ldots + a_{d-1} x_{n-d+1} + a_d
x_{n-d}  \, , \quad \forall n
\ge d+1 \, .
\end{equation} 
Under which conditions on $a_1, a_2, \ldots , a_{d-1}, a_d$, and presumably
also on the initial values $x_1,  \ldots , x_d$, does the solution
$(x_n)$ of (\ref{eq3}) satisfy (\ref{eq2})? There already exists a
sizeable literature addressing this question; see e.g.\ \cite{BDCDSA, KNRS,
  NSh, Sch88}. All previous work, however, seems to have led merely to {\em sufficient\/} conditions that are
either restrictive or difficult to state. By contrast, the main
result in this paper (Theorem \ref{thm380}) provides an
easy-to-state, {\em necessary and sufficient\/} condition for every
non-trivial solution of (\ref{eq3}) to satisfy (\ref{eq2}), and in
fact to conform to (\ref{eq1}) in an even stronger sense. The classical results
in the literature are then but simple corollaries.

To illustrate the main result, consider specifically the second-order
difference equation 
\begin{equation}\label{eqrec1}
x_n = 2 \gamma x_{n-1} - 5 x_{n-2} \, , \quad \forall n \ge 3 \, ,
\end{equation}
where $\gamma$ is a real parameter with $|\gamma|<\sqrt{5}$. Given any initial
values $x_1, x_2 \in \R$, does the solution $(x_n)$ of (\ref{eqrec1})
satisfy (\ref{eq2})? Theorem \ref{thm380} asserts that the answer to
this question is positive provided that the set $\cZ_{\gamma} = \{z^2
= 2\gamma z - 5\}= \{\gamma \pm \imath \sqrt{5 - \gamma^2}\}$ has a
certain number-theoretical property (``nonresonance''). For example,
if $\gamma = \sqrt{5} \cos (\pi /\sqrt{8})=0.9928$ then $\cZ_{\gamma}$
turns out to be nonresonant, and (\ref{eq2}) holds for every solution $(x_n)$ of
(\ref{eqrec1}), unless $x_1 = x_2 =0$, in which case $x_n \equiv 0$. On the other hand, if $\gamma =
\sqrt{5} \cos (\frac12 \pi \log_{10}5)=1.018$ then $\cZ_{\gamma}$
fails to be nonresonant, and correspondingly (\ref{eq2}) does not hold
for {\em any\/} solution of (\ref{eqrec1}). Finally, if $\gamma = 1$
then $(x_n)$ either satisfies (\ref{eq2}) for {\em all\/} initial
values $x_1,x_2$ (unless $x_1 = x_2 = 0$) or for {\em none\/} at all,
and experimental evidence seems to support the former alternative; see
Figure \ref{fig1} and also Example \ref{ex316b} below.

\begin{figure}[ht]
\begin{center}
\psfrag{td1}[]{$1$}
\psfrag{td2}[]{$2$}
\psfrag{td3}[]{$3$}
\psfrag{td4}[]{$4$}
\psfrag{td5}[]{$5$}
\psfrag{td6}[]{$6$}
\psfrag{td7}[]{$7$}
\psfrag{td8}[]{$8$}
\psfrag{td9}[]{$9$}
\psfrag{tna1}[r]{\small $29.99$}
\psfrag{tna2}[r]{\small $17.43$}
\psfrag{tna3}[r]{\small $12.82$}
\psfrag{tna4}[r]{\small $9.58$}
\psfrag{tna5}[r]{\small $7.93$}
\psfrag{tna6}[r]{\small $6.70$}
\psfrag{tna7}[r]{\small $6.01$}
\psfrag{tna8}[r]{\small $5.02$}
\psfrag{tna9}[r]{\small $4.52$}
\psfrag{tnb1}[r]{\small $43.68$}
\psfrag{tnb2}[r]{\small $7.77$}
\psfrag{tnb3}[r]{\small $7.20$}
\psfrag{tnb4}[r]{\small $6.87$}
\psfrag{tnb5}[r]{\small $6.68$}
\psfrag{tnb6}[r]{\small $6.62$}
\psfrag{tnb7}[r]{\small $6.71$}
\psfrag{tnb8}[r]{\small $6.99$}
\psfrag{tnb9}[r]{\small $7.48$}
\psfrag{tnc1}[r]{\small $29.99$}
\psfrag{tnc2}[r]{\small $17.23$}
\psfrag{tnc3}[r]{\small $12.78$}
\psfrag{tnc4}[r]{\small $9.51$}
\psfrag{tnc5}[r]{\small $7.92$}
\psfrag{tnc6}[r]{\small $6.61$}
\psfrag{tnc7}[r]{\small $6.01$}
\psfrag{tnc8}[r]{\small $5.19$}
\psfrag{tnc9}[r]{\small $4.76$}
\psfrag{tnd1}[r]{\small $30.10$}
\psfrag{tnd2}[r]{\small $17.60$}
\psfrag{tnd3}[r]{\small $12.49$}
\psfrag{tnd4}[r]{\small $9.69$}
\psfrag{tnd5}[r]{\small $7.91$}
\psfrag{tnd6}[r]{\small $6.69$}
\psfrag{tnd7}[r]{\small $5.79$}
\psfrag{tnd8}[r]{\small $5.11$}
\psfrag{tnd9}[r]{\small $4.57$}
\psfrag{taa1}[l]{$\gamma = 0.9928$}
\psfrag{tbb1}[l]{$\gamma = 1.018$}
\psfrag{tcc1}[l]{$\gamma = 1$}
\psfrag{tdd1}[l]{exact BL}
\includegraphics{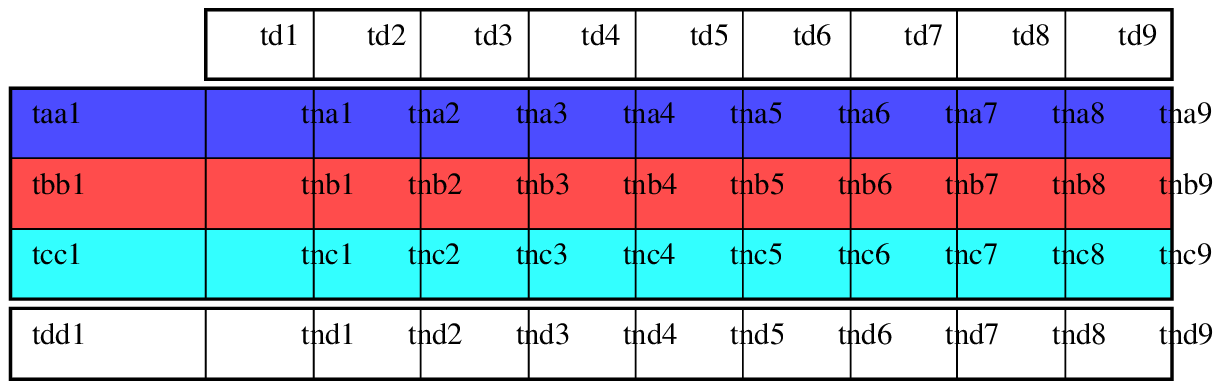}
%
%
\caption{Relative frequencies (in
    percent) of the leading decimal digits for the first $10000$ terms
  of the solution $(x_n)$ of (\ref{eqrec1}) with $x_1 = x_2 = 1$, for
  different values of the parameter $\gamma$; the
  bottom row shows the exact BL probabilities $100\cdot \log_{10}
  (1+d_1^{-1})$.}\label{fig1}
\end{center}
\end{figure}

This article is organized as follows. Section \ref{sec2} introduces
the formal definitions and analytic tools required for the
analysis. In Section \ref{sec3}, the main results are stated and
proved, based upon a tailor-made notion of nonresonance (Definition
\ref{def31}). Several examples are presented in order to illustrate this
notion as well as the main results. Finally, Section \ref{sec4}
briefly discusses possible extensions and modifications of the
latter. Given the widespread usage of discrete-time linear systems and
linear difference equations as models throughout the sciences, the
results of this article may contribute to a better understanding of,
and appreciation for BL and its applications in many disciplines. For
the reader's convenience, several analytical facts of an
auxiliary nature are deferred to an appendix, including the plausible
but lengthy-to-prove Theorem \ref{lemx} which in turn implies the crucial
Lemma \ref{lem2Omega}.

\section{Basic definitions and tools}\label{sec2}

Throughout this article, the following, mostly standard notation and
terminology is used. The symbols $\N$, $\N_0$, $\Z$, $\Q$,
$\R^+$, $\R$ and $\C$ denote the sets of, respectively, positive
integer, nonnegative integer, integer, rational, positive real, real
and complex numbers, and $\varnothing$ is the empty set. For every
integer $b\ge 2$, the logarithm base $b$ of $x\in \R^+$ is denoted
$\log_b x$, and $\ln x$ is the natural logarithm (base $e$) of $x$; for
convenience, let $\log_b0:=0$ for every $b$, and $\ln 0:= 0$. Given any $x\in \R$, the
largest integer not larger than $x$ is symbolized by $\lfloor x
\rfloor$. The real part, imaginary part, complex conjugate and
absolute value (modulus) of any $z\in \C$ is $\Re z$, $\Im z$,
$\overline{z}$ and $|z|$, respectively. For every $z\in \C\setminus
\{0\}$ there exists a unique number $-\pi < \arg z \le \pi$ with $z =
|z|e^{\imath \arg z}$. Given any
$w\in \C$ and $\cZ\subset \C$, define $w+\cZ:= \{w+z : z\in \cZ\}$ and $w\cZ:=
\{wz:z\in \cZ\}$. Thus with the unit circle $\mS:=\{ z\in \C : |z|=1\}$,
for example, $w + \mS = \{z\in \C: |z-w|=1 \}$ and $w\mS = \{z\in \C :|z| =|w|\}$ for every $w\in \C$.
The cardinality (number of
elements) of any finite set $\cZ\subset \C$ is $\#\cZ$.

The symbol $d$ throughout denotes a positive integer, usually unspecified or clear
from the context. The $d$-dimensional torus $\R^d /\Z^d$ is symbolized by
$\T^d$, its elements being represented as $\langle x \rangle = x+\Z^d$
with $x\in \R^d$; for simplicity write $\T$ instead of $\T^1$. The compact Abelian group $\T^d$ can be identified
with the $d$-fold product $\mS \times \ldots \times \mS$, via the
identification $\langle x \rangle = \langle (x_1, \ldots , x_d)\rangle \leftrightarrow (e^{2\pi \imath
  x_1}, \ldots , e^{2\pi \imath x_d})$ which is both a homeomorphism
(of compact spaces) and an isomorphism (of groups). Denote the Haar (probability) measure on $\T^d$ by
$\lambda_{\T^d}$. Call a set $\cJ\subset \T$ an {\em arc} if $\cJ=\langle
\cI \rangle := \{\langle x \rangle : x \in \cI\}$ for some interval
$\cI\subset \R$. With this, a sequence $(x_n)$ of real
numbers is {\em uniformly distributed modulo one}, henceforth
abbreviated as {\em u.d.} mod $1$, if
$$
\lim\nolimits_{N\to \infty} \frac{\# \{  n \le N : \langle x_n
  \rangle \in \cJ\}}{N} = \lambda_{\T}(\cJ) \quad \mbox{\rm for every
  arc $\cJ\subset \T$}\, .
$$
Equivalently, $\lim_{N\to \infty} \frac1{N}
\sum_{n=1}^N f(\langle x_n \rangle) = \int_{\T} f \, {\rm
  d}\lambda_{\T}$ holds for every continuous (or merely Riemann
integrable) function $f:\T \to \C$.

Recall that throughout $b$ is an integer with $b\ge 2$, informally
referred to as a {\em base}. Given a base $b$ and any $x\ne 0$, there exists a unique
real number $1\le S_b(x)<b$ and a unique integer $k$ such that $|x|=
S_b(x)b^k$. The number $S_b(x)$, referred to as the (base-$b$)
{\em significand} (or {\em mantissa}) of $x$, can be written
explicitly as
$$
S_b(x) = b^{\log_b|x| - \lfloor \log_b|x| \rfloor } \, ;
$$
in addition, let $S_b(0):=0$ for every base $b$. The integer $\lfloor
S_b(x) \rfloor$ is the {\em first significant digit\/} (base $b$) of
$x$; note that $\lfloor S_b(x)\rfloor \in \{1, \ldots , b-1\}$ whenever
$x\ne 0$. 

In this article, conformance to BL for sequences of real numbers is studied
via the following basic definition.

\begin{defn}
A sequence $(x_n)$ in $\R$ is a $b\,$-{\em Benford sequence}, or
$b\,$-{\em Benford\/} for short, with $b\in \N\setminus\{1\}$, if
$$
\lim\nolimits_{N\to \infty} \frac{\#\{ n \le N : S_b(x_n) \le
  s \}}{N} = \log_b s \, , \quad \forall s\in [1,b)  \, .
$$
The sequence $(x_n)$ is a {\em Benford sequence}, or simply {\em
  Benford}, if it is $b\,$-Benford for every $b\in \N \setminus \{1\}$.
\end{defn}

Specifically, note that (\ref{eq2}) holds whenever $(x_n)$ is
$10$-Benford, whereas the converse is not true in general since, for
instance, the sequence of first significant digits of $(2^n)$, i.e.\ $\bigl( \lfloor S_{10}(2^n)
\rfloor\bigr)=(2,4,8,1,3, \ldots)$, is
clearly not $10$-Benford yet can easily be shown to satisfy (\ref{eq2}).

Though very simple, the following observation is fundamental for the
purpose of this work because it enables the
application of a host of tools from the theory of uniform distribution.

\begin{prop}\label{prop_dia}{\rm $\!\!$\cite[Thm.1]{Dia}}
Let $b\in \N\setminus\{1\}$. A sequence $(x_n)$ in $\R$ is $b\,$-Benford if and
only if the sequence $(\log_b|x_n|)$ is u.d.\ {\rm mod} $1$.
\end{prop}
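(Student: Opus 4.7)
The plan is to unwind both definitions and connect them via the identity
\[
S_b(x) = b^{\log_b|x| - \lfloor \log_b|x|\rfloor} \qquad (x \ne 0),
\]
which is the explicit formula recorded in the excerpt. Write $\{y\} := y - \lfloor y \rfloor$ for the fractional part. Since $t \mapsto b^t$ is strictly increasing on $[0,1)$ and maps onto $[1,b)$, one has for each fixed $s \in [1,b)$ and each $x\ne 0$ the equivalence
\[
S_b(x) \le s \; \Longleftrightarrow \; \{\log_b|x|\} \le \log_b s.
\]
Indices $n$ with $x_n=0$ contribute to both sides equally: by the conventions $S_b(0):=0$ and $\log_b 0:=0$ stated earlier, one has $S_b(x_n) = 0 \le s$ and simultaneously $\{\log_b|x_n|\} = 0 \le \log_b s$, so these terms are counted in both counts for every $s\in[1,b)$.

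Substituting $t := \log_b s$, so that $t$ ranges over $[0,1)$ as $s$ ranges over $[1,b)$, the defining relation for a $b$-Benford sequence becomes
\[
\lim_{N\to\infty}\frac{\#\{n \le N : \{\log_b|x_n|\} \le t\}}{N} \;=\; t \qquad \text{for every } t \in [0,1),
\]
which I would recognize as precisely the statement that the sequence $(\log_b|x_n|)$ has asymptotic frequency $\lambda_\T(\cJ)$ in every arc of the form $\cJ = \langle [0,t]\rangle \subset \T$.

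It then remains to invoke the standard fact from uniform distribution theory that a sequence $(y_n) \subset \R$ is u.d.\ mod $1$ if and only if the above asymptotic frequency holds for all arcs $\langle [0,t]\rangle$ with $t \in [0,1)$. One direction is immediate (specialize the defining condition of u.d.\ mod $1$ to these arcs); the converse follows because an arbitrary arc $\langle [\alpha,\beta]\rangle \subset \T$ can be written as a set-theoretic difference (or, if it straddles $0$, a disjoint union) of two arcs of the form $\langle [0,\cdot]\rangle$, and asymptotic frequencies are additive under such finite Boolean operations.

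The proof is essentially a change of variable $s = b^t$ combined with this characterization of u.d.\ mod $1$ by initial arcs, so I anticipate no real obstacle; the only point that warrants a brief comment is the consistent treatment of the zero terms, which the adopted conventions already ensure.
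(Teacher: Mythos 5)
Your proof is correct. Note that the paper offers no argument for Proposition \ref{prop_dia} beyond the citation to \cite[Thm.1]{Dia}, so there is no internal proof to compare against; what you wrote is the standard derivation and agrees in substance with what the cited reference gives. The change of variable $s=b^t$, the strict monotonicity of $t\mapsto b^t$ from $[0,1)$ onto $[1,b)$, and the reduction of u.d.\ mod $1$ to convergence of the empirical frequencies on the initial arcs $\langle [0,t]\rangle$ (with general arcs then handled by taking differences and approximating endpoints) are exactly the right moves. Your observation that the conventions $S_b(0):=0$ and $\log_b 0:=0$ make the zero terms fall into both counts for every $s\in[1,b)$ is a small but genuine point worth making explicit, since it shows that the equivalence $S_b(x_n)\le s\Leftrightarrow \{\log_b|x_n|\}\le \log_b s$ holds term-by-term without any exclusion of indices.
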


To prepare for the application of Proposition \ref{prop_dia}, several basic facts from the theory of
uniform distribution are reviewed here for the convenience of the
reader who, for an authoritative account on
the theory in general, may also wish to consult \cite{DT, KN}.

\begin{lem}\label{lem200}
The following are equivalent for every sequence $(x_n)$ in
$\R$:
\begin{enumerate}
\item $(x_n)$ is u.d.\ {\rm mod} $1$;
\item For every $\varepsilon > 0$ there exists a uniformly distributed
  sequence $(\widetilde{x}_n)$ with
$$
\overline{\lim}_{N\to \infty} \frac{\# \{ n \le N
  :|x_{n} - \widetilde{x}_{n}|> \varepsilon\}}{N} < \varepsilon \, ;
$$
\item Whenever $(y_n)$ converges in $\R$ then $(x_n + y_n)$ is u.d.\
  {\rm mod} $1$;
\item $(kx_n)$ is u.d.\ {\rm mod} $1$ for every $k\in \Z \setminus
  \{0\}$;
\item $(x_n + \alpha \ln n)$ is u.d.\ {\rm mod} $1$ for every $\alpha
  \in \R$.
\end{enumerate}
\end{lem}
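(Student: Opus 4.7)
The plan is to pivot everything through (i), establishing (i) $\Leftrightarrow$ each of (ii)--(v). The reverse implications are all trivial: (ii) with $\widetilde{x}_n:=x_n$, (iii) with $y_n\equiv 0$, (iv) with $k=1$, (v) with $\alpha=0$. So the real work is (i) $\Rightarrow$ (ii)--(v) and (ii) $\Rightarrow$ (i). The unifying tool is the equivalent formulation stated just before the lemma: $(x_n)$ is u.d.\ mod $1$ iff $\frac1N\sum_{n=1}^N f(\langle x_n\rangle)\to\int_\T f\,{\rm d}\lambda_\T$ for every continuous $f:\T\to\C$, which on specializing to $f(\langle x\rangle)=e^{2\pi\imath m x}$ is Weyl's exponential sum criterion. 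For (i) $\Rightarrow$ (iv) this is immediate, since if (i) holds and $k\in\Z\setminus\{0\}$, then for any $m\in\Z\setminus\{0\}$ we have $mk\ne 0$, so $\frac1N\sum e^{2\pi\imath mk x_n}\to 0$, whence $(kx_n)$ is u.d.\ mod $1$. For (i) $\Rightarrow$ (iii), given continuous $f$ and $y_n\to y$, I would split
$$
\tfrac1N\!\sum_{n=1}^N f(\langle x_n+y_n\rangle) = \tfrac1N\!\sum_{n=1}^N f(\langle x_n+y\rangle) + \tfrac1N\!\sum_{n=1}^N\bigl(f(\langle x_n+y_n\rangle)-f(\langle x_n+y\rangle)\bigr);
$$
the first term tends to $\int_\T f\,{\rm d}\lambda_\T$ by translation invariance applied to (i), while the second is bounded by the modulus of continuity of $f$ at $|y_n-y|\to 0$.

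For (ii) $\Rightarrow$ (i), fix continuous $f:\T\to\C$ and $\delta>0$. Uniform continuity of $f$ on the compact $\T$ supplies an $\varepsilon\in(0,\delta)$ such that $|u-v|\le\varepsilon$ forces $|f(\langle u\rangle)-f(\langle v\rangle)|\le\delta$. Invoking (ii) with this $\varepsilon$ yields a u.d.\ sequence $(\widetilde{x}_n)$ with ``bad'' set of asymptotic density $<\varepsilon$. Splitting $\{1,\dots,N\}$ by $|x_n-\widetilde{x}_n|\le\varepsilon$ or not, and bounding the two contributions by $\delta$ and $2\|f\|_\infty$ respectively, I obtain
$$
\limsup_{N\to\infty}\left|\tfrac1N\!\sum_{n=1}^N f(\langle x_n\rangle)-\!\int_\T f\,{\rm d}\lambda_\T\right|\le\delta+2\|f\|_\infty\varepsilon,
$$
which, since $\delta>0$ is arbitrary, yields (i).

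For (i) $\Rightarrow$ (v), which I expect to be the main obstacle, Weyl reduces the task to showing $\frac1N\sum_{n=1}^N a_nb_n\to 0$ for every $k\in\Z\setminus\{0\}$, where $a_n:=e^{2\pi\imath k x_n}$ and $b_n:=n^{2\pi\imath k\alpha}$. By hypothesis the partial sums $S_N:=\sum_{n=1}^N a_n$ satisfy $|S_N|=o(N)$. Noting $|b_n|=1$ and, via a Taylor estimate, $|b_{n+1}-b_n|\le C|k\alpha|/n$, Abel summation gives
$$
\sum_{n=1}^N a_nb_n = S_Nb_N-\sum_{n=1}^{N-1}S_n(b_{n+1}-b_n),
$$
so that $\bigl|\sum_{n=1}^N a_nb_n\bigr|\le|S_N|+C|k\alpha|\sum_{n=1}^{N-1}|S_n|/n = o(N)$ by the standard Cesaro fact that $|S_n|=o(n)$ implies $\sum_{n\le N}|S_n|/n=o(N)$. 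The obstacle, compared with the other equivalences, is that the ``twist'' $\alpha\ln n$ is neither convergent nor small on a set of density one, so neither translation invariance nor the approximation argument of (ii) is directly available; partial summation is what converts the $o(N)$ cancellation hidden in $S_N$ into cancellation for the twisted sum, and it is essentially the slow growth $|b_{n+1}-b_n|=O(1/n)$ that makes this work.
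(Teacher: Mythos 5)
Your proof is correct. It is, however, a genuinely different route from the paper's, which dispenses with Lemma~2.2 in two sentences by pure citation: (i)$\Rightarrow$(ii) is declared obvious, (ii)$\Rightarrow$(i) is referred to \cite[Lem.2.3]{BDCDSA}, and (i)$\Rightarrow$(iii),(iv),(v) are referred to \cite[Thm.I.1.2]{KN}, \cite[Exc.I.2.4]{KN}, and \cite[Lem.2.8]{BDCDSA}, respectively. What you have done is reconstruct self-contained proofs of all four nontrivial implications, and each of your arguments is both correct and essentially the canonical one: (i)$\Rightarrow$(iv) is the immediate Weyl substitution $m\mapsto mk$; (i)$\Rightarrow$(iii) uses translation invariance of $\lambda_\T$ plus the uniform modulus of continuity of $f$ to absorb the vanishing perturbation $y_n-y$; (ii)$\Rightarrow$(i) is a density-$\varepsilon$ splitting bounded by $\delta+2\|f\|_\infty\varepsilon$ (the one small point worth being explicit about is that $|u-v|\le\varepsilon$ in $\R$ dominates the quotient metric on $\T$, so the Euclidean condition really does trigger uniform continuity on the torus); and (i)$\Rightarrow$(v) is handled by Abel summation with the crucial estimate $|b_{n+1}-b_n|=O(|k\alpha|/n)$ for $b_n=n^{2\pi\imath k\alpha}$, converting the $o(N)$ cancellation in $S_N=\sum a_n$ into $o(N)$ cancellation in $\sum a_nb_n$ via the Cesàro fact that $|S_n|=o(n)$ forces $\sum_{n\le N}|S_n|/n=o(N)$. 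Your approach trades the paper's brevity for transparency and self-containment; the trade-off is entirely reasonable, and (i)$\Rightarrow$(v) in particular nicely isolates the mechanism (slow variation of the twist $\alpha\ln n$) that makes the perturbation harmless even though it neither converges nor is small on a set of density one.
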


\begin{proof}
Clearly (i)$\Rightarrow$(ii), and the converse is analogous to
\cite[Lem.2.3]{BDCDSA}.
Also, each of the statements (iii), (iv), and (v) trivially implies
(i), while the reverse implication is \cite[Thm.I.1.2]{KN},
\cite[Exc.I.2.4]{KN}, and \cite[Lem.2.8]{BDCDSA}, respectively.
\end{proof}

\begin{lem}\label{lem230}
Let $(x_n)$ be a sequence in $\R$, and $L\in \N$. If $(x_{nL+\ell })$ is
u.d.\ {\rm mod} $1$ for every $\ell \in \{1,\ldots , L\}$ then $(x_n)$ is
u.d.\ {\rm mod} $1$ as well.
\end{lem}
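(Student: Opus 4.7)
The plan is to verify the characterization of uniform distribution mod $1$ in its functional form given just above the lemma, namely that $\tfrac{1}{N}\sum_{n=1}^N f(\langle x_n\rangle) \to \int_{\T} f\,\mathrm{d}\lambda_{\T}$ for every continuous $f:\T\to\C$. Fix such an $f$. The idea is to partition the index set $\{1,\ldots,N\}$ into the $L$ arithmetic progressions modulo $L$ and then apply the hypothesis to each one.

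To carry this out, I would first treat the case $N=ML$ with $M\in\N$, since every index $n\in\{1,\ldots,ML\}$ can then be written uniquely as $n=mL+\ell$ with $\ell\in\{1,\ldots,L\}$ and $m\in\{0,1,\ldots,M-1\}$. This yields the exact identity
\begin{equation*}
\frac{1}{ML}\sum_{n=1}^{ML} f(\langle x_n\rangle) \;=\; \frac{1}{L}\sum_{\ell=1}^{L}\frac{1}{M}\sum_{m=0}^{M-1} f(\langle x_{mL+\ell}\rangle).
\end{equation*}
By the hypothesis, each inner average converges, as $M\to\infty$, to $\int_{\T} f\,\mathrm{d}\lambda_{\T}$, so the outer average of $L$ such quantities converges to the same value.

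For an arbitrary $N\in\N$, write $N=ML+r$ with $0\le r<L$. Then
\begin{equation*}
\left|\sum_{n=1}^{N} f(\langle x_n\rangle) - \sum_{n=1}^{ML} f(\langle x_n\rangle)\right| \;\le\; r\|f\|_\infty \;\le\; L\|f\|_\infty,
\end{equation*}
so the difference between $\tfrac{1}{N}\sum_{n=1}^N f(\langle x_n\rangle)$ and $\tfrac{1}{ML}\sum_{n=1}^{ML} f(\langle x_n\rangle)$ tends to $0$ as $N\to\infty$ (note $M\to\infty$ as well, and $N/(ML)\to 1$). Hence $\tfrac{1}{N}\sum_{n=1}^N f(\langle x_n\rangle) \to \int_{\T} f\,\mathrm{d}\lambda_{\T}$, and $(x_n)$ is u.d.\ mod $1$.

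There is no genuine obstacle here; the only thing to watch is the indexing bookkeeping, ensuring the $L$ residue classes $\{mL+\ell : m\ge 0\}_{\ell=1}^{L}$ partition $\N$ exactly (which they do, since $\ell$ ranges over $\{1,\ldots,L\}$ rather than $\{0,\ldots,L-1\}$), and that the residual tail of at most $L$ terms is negligible upon dividing by $N$.
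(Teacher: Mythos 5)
Your proposal is correct and takes essentially the same approach as the paper: both split the index set into the $L$ residue classes modulo $L$, apply the hypothesis to each class, and discard a tail of at most $L$ terms. The only cosmetic difference is that the paper phrases the averaging condition via Weyl's criterion (exponential test functions), while you use the equivalent formulation with arbitrary continuous $f$.
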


\begin{proof}
This follows directly from Weyl's criterion \cite[Thm.I.2.1]{KN}: For every $k\in \Z\setminus \{0\}$,
\begin{align*}
\left|
\frac1{N} \sum\nolimits_{n = 1}^N e^{2\pi \imath k x_{n}}
\right| & \le
\left|
\frac1{N} \sum\nolimits_{n = 1}^{L \lfloor N/L \rfloor} e^{2\pi \imath
  k x_{n}}
\right| +
\left|
\frac1{N} \sum\nolimits_{n = L \lfloor N/L \rfloor + 1}^N e^{2\pi \imath k x_{n}}
\right| \\[1mm]
& \le \left|
\frac1{N} \sum\nolimits_{\ell =1}^L \sum\nolimits_{n = 0}^{\lfloor N/L
  \rfloor - 1} e^{2\pi \imath k x_{nL + \ell}}
\right| + \frac{L}{N} \\[1mm]
& \le \frac1{L} \sum\nolimits_{\ell=1}^L \left|
\frac1{\lfloor N/L \rfloor} \sum\nolimits_{n=0}^{\lfloor N /L
  \rfloor - 1} e^{2\pi \imath  k x_{nL + \ell}}
\right| + \frac{L}{N} \: \stackrel {N\to \infty}{\longrightarrow} \: 0
\, ,
\end{align*}
because $\displaystyle \lim\nolimits_{M\to \infty}\frac1{M} \sum\nolimits_{n=0}^{M-1} e^{2\pi \imath k
  x_{nL + \ell}} = 0$ for every $\ell$, by assumption.
\end{proof}

When combined with the well-known fact that
$(n\vartheta)$ is u.d.\ mod $1$ precisely if $\vartheta \in \R$ is
irrational \cite[Exp.I.2.1]{KN}, Lemma \ref{lem200} and \ref{lem230}  immediately yield

\begin{lem}\label{lem240}
Let $\alpha, \vartheta \in \R$, $L\in \N$, and assume the sequence $(y_{n})$ in
$\R$ has the property that $(y_{nL+\ell})$ converges for every $\ell
\in \{1, \ldots , L\}$. Then $(n\vartheta + \alpha \ln
n + y_n)$ is u.d.\ {\rm mod} $1$ if and only if $\vartheta \in \R
\setminus \Q$.
\end{lem}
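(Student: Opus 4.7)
My plan is to use Lemma \ref{lem200}(v) to strip off the $\alpha\ln n$ term, reducing to the case $\alpha=0$, and then prove the two directions of the resulting equivalence using Lemma \ref{lem200}(iii)--(iv) together with Lemma \ref{lem230}.

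\emph{Reduction.} Applying the equivalence (i)$\Leftrightarrow$(v) of Lemma \ref{lem200} once to the sequence $(n\vartheta+y_n)$ (with the parameter of (v) set to $\alpha$) and once to $(n\vartheta+\alpha\ln n+y_n)$ (with parameter $-\alpha$) shows that these two sequences are either both u.d.\ mod $1$ or neither. It therefore suffices to prove the lemma for $\alpha=0$.

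\emph{Sufficiency.} Suppose $\vartheta\in\R\setminus\Q$. For each $\ell\in\{1,\ldots,L\}$ I decompose the arithmetic-progression subsequence
\[
(nL+\ell)\vartheta+y_{nL+\ell}\;=\;nL\vartheta\,+\,\bigl(\ell\vartheta+y_{nL+\ell}\bigr)\, ,
\]
whose bracketed term converges in $n$ by hypothesis. Since $L\vartheta$ is irrational whenever $\vartheta$ is, $(nL\vartheta)$ is u.d.\ mod $1$ by the classical fact that $(n\beta)$ is u.d.\ mod $1$ precisely when $\beta\notin\Q$; Lemma \ref{lem200}(iii) then upgrades this to u.d.\ mod $1$ of the full subsequence. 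As this holds for every $\ell$, Lemma \ref{lem230} delivers u.d.\ mod $1$ for $(n\vartheta+y_n)$.

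\emph{Necessity.} Suppose $\vartheta=p/q$ with $q\in\N$, and assume for contradiction that $(n\vartheta+y_n)$ is u.d.\ mod $1$. Lemma \ref{lem200}(iv) with $k=q$, combined with $qn\vartheta=np\in\Z$, forces $(qy_n)$ to be u.d.\ mod $1$ as well. However, setting $y_\ell^*:=\lim_n y_{nL+\ell}$ and grouping by residue class mod $L$, a routine Ces\`aro argument (each class has asymptotic density $1/L$ in $\N$) yields, for every continuous $f:\T\to\C$,
\[
\frac{1}{N}\sum_{n=1}^N f\bigl(\langle qy_n\rangle\bigr)\;\longrightarrow\;\frac{1}{L}\sum_{\ell=1}^L f\bigl(\langle qy_\ell^*\rangle\bigr)\, .
\]
The limiting distribution of $(qy_n)$ on $\T$ is therefore an atomic probability measure supported on at most $L$ points, which cannot equal $\lambda_\T$; hence $(qy_n)$ is not u.d.\ mod $1$, a contradiction.

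The main obstacle here is really just bookkeeping: the arithmetic-progression convergence hypothesis on $(y_n)$ must do double duty, supplying the convergent remainder needed to invoke Lemma \ref{lem200}(iii) in the sufficiency half, and producing the finitely-supported limit distribution that blocks u.d.\ mod $1$ in the necessity half.
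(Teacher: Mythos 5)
Your proof is correct and fleshes out exactly the argument the paper condenses to a single sentence, namely that the lemma follows from Lemma \ref{lem200} (you invoke parts (iii), (iv), (v)), Lemma \ref{lem230}, and the classical fact that $(n\vartheta)$ is u.d.\ mod $1$ precisely when $\vartheta$ is irrational. Both directions are handled as the authors intended, so this matches the paper's proof.
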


The remaining two results in this section deal with sequences of a
particular form that are going to appear naturally in later sections. 
For a concise formulation, given any $\cZ\subset \C$, 
denote by $\mbox{\rm span}_{\Q}\cZ$
the smallest subspace of $\C$ (over $\Q$) containing $\cZ$;
equivalently, if $\cZ\neq \varnothing$ then $\mbox{\rm span}_{\Q}\cZ$ is the set of all finite
{\em rational\/} linear combinations of elements of $\cZ$, i.e.
$$
\mbox{\rm span}_{\Q}\cZ = \bigl\{  \rho_1 z_1 + \ldots + \rho_n z_n
: n \in \N , \rho_1, \ldots , \rho_n \in \Q , z_1, \ldots , z_n
\in \cZ \bigr\} \, ;
$$
note that $\mbox{\rm span}_{\Q}\varnothing=\{0\}$. With this terminology,
recall that $z_1, \ldots, z_n\in \C$ are $\Q$-{\em independent}
(or {\em rationally independent}) if $\mbox{\rm span}_{\Q}\{z_1, 
\ldots, z_n\}$ is $n$-dimension\-al, or equivalently if $\sum_{j=1}^{n} k_j z_j= 0$ with
integers $k_1, \ldots , k_n$ implies $k_1 = \ldots = k_n=0$.
The following result is a generalization of
\cite[Lem.2.9]{BDCDSA}.

\begin{lem}\label{lem220}
Let $d\in \N$, $\vartheta_0, \vartheta_1, \ldots , \vartheta_d\in \R$,
and assume $f:\T^d \to \C$ is continuous, and non-zero
$\lambda_{\T^d}$-almost everywhere. If the $d+2$ numbers $1, \vartheta_0, \vartheta_1,
\ldots, \vartheta_d$ are $\Q$-independent then the sequence
$$
\Bigl( 
n\vartheta_0 + \alpha \ln n + \beta \ln \big|f\bigl(\langle ( n\vartheta_1, \ldots ,
n\vartheta_d) \rangle \bigr)  + z_n\big|
\Bigr)
$$
is u.d.\ {\rm mod} $1$ for every $\alpha, \beta \in \R$ and every
sequence $(z_n)$ in $\C$ with $\lim_{n\to \infty} z_n = 0$.
\end{lem}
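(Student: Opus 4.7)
The plan is to apply part (ii) of Lemma \ref{lem200}: given $\ep_0>0$ I will exhibit an auxiliary sequence that is u.d.\ mod $1$ and differs from the target sequence by more than $\ep_0$ only on a set of upper density less than $\ep_0$. The central obstacle is the possible singular behaviour of $\ln\bigl|f(\langle n\vec\vartheta\rangle)+z_n\bigr|$ near zeros of $f$, which forces a truncation of $|f|$, to be controlled by the hypothesis $f\neq 0$ almost everywhere.

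Since $1,\vartheta_0,\vartheta_1,\ldots,\vartheta_d$ are $\Q$-independent, Weyl's theorem yields equidistribution of $\bigl(\langle(n\vartheta_0,n\vartheta_1,\ldots,n\vartheta_d)\rangle\bigr)$ on $\T^{d+1}$, and a fortiori of $(\langle n\vec\vartheta\rangle)$ on $\T^d$ with $\vec\vartheta:=(\vartheta_1,\ldots,\vartheta_d)$. For $\ep>0$ to be chosen later, introduce the continuous cut-off $\widetilde f_\ep:=\max(|f|,\ep)$, bounded below by $\ep$ on $\T^d$, and define
$$
\widetilde x_n\;:=\;n\vartheta_0+\alpha\ln n+\beta\ln\widetilde f_\ep\bigl(\langle n\vec\vartheta\rangle\bigr).
$$

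Step one: show $(\widetilde x_n)$ is u.d.\ mod $1$. By Lemma \ref{lem200}(v) it suffices to handle $y_n:=n\vartheta_0+g(\langle n\vec\vartheta\rangle)$, where $g:=\beta\ln\widetilde f_\ep$ is bounded and continuous on $\T^d$. For each $k\in\Z\setminus\{0\}$, the formula $G_k(\langle(t_0,\vec t)\rangle):=e^{2\pi\imath k(t_0+g(\langle\vec t\rangle))}$ defines a continuous function on $\T^{d+1}$ (well-defined because $k\in\Z$), and $e^{2\pi\imath ky_n}=G_k\bigl(\langle(n\vartheta_0,n\vec\vartheta)\rangle\bigr)$. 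Equidistribution on $\T^{d+1}$ then gives
$$
\frac1{N}\sum_{n=1}^N e^{2\pi\imath ky_n}\;\longrightarrow\;\int_{\T^{d+1}}G_k\,{\rm d}\lambda_{\T^{d+1}}\;=\;\Bigl(\int_\T e^{2\pi\imath kt_0}\,{\rm d}t_0\Bigr)\Bigl(\int_{\T^d}e^{2\pi\imath kg}\,{\rm d}\lambda_{\T^d}\Bigr)\;=\;0,
$$
since the first factor vanishes. Weyl's criterion shows that $(y_n)$, hence $(\widetilde x_n)$, is u.d.\ mod $1$.

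Step two: compare $(\widetilde x_n)$ with $(x_n)$. On the set where $|f(\langle n\vec\vartheta\rangle)|\ge 2\ep$ we have $\widetilde f_\ep(\langle n\vec\vartheta\rangle)=|f(\langle n\vec\vartheta\rangle)|$, and once $|z_n|<\ep$ (true for all but finitely many $n$) both $|f(\langle n\vec\vartheta\rangle)|$ and $|f(\langle n\vec\vartheta\rangle)+z_n|$ lie in $[\ep,\infty)$; the Lipschitz bound for $\ln$ on this interval yields $|\widetilde x_n-x_n|\le|\beta||z_n|/\ep\to 0$. Hence, apart from finitely many indices, $\{n:|\widetilde x_n-x_n|>\ep_0\}\subset\{n:\langle n\vec\vartheta\rangle\in\{|f|\le 2\ep\}\}$, whose upper density is at most $\lambda_{\T^d}(\{|f|\le 2\ep\})$ by the equidistribution of $(\langle n\vec\vartheta\rangle)$ on $\T^d$ applied to the closed set $\{|f|\le 2\ep\}$. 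Because $\{f=0\}$ has Haar measure zero, this bound tends to $0$ with $\ep$, so $\ep$ can be chosen small enough to make it less than $\ep_0$. Lemma \ref{lem200}(ii) then delivers the u.d.\ mod $1$ property of $(x_n)$.
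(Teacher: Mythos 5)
Your proof is correct, and it follows the same basic architecture as the paper's (construct an auxiliary u.d.\ sequence $(\widetilde x_n)$ and invoke Lemma \ref{lem200}(ii)), but the two proofs differ meaningfully in how they make Step 1 rigorous. The paper keeps $\widetilde x_n := n\vartheta_0 + \alpha\ln n + \beta\ln|f(\langle n\vec\vartheta\rangle)|$ (so $\beta\ln|f|$ is merely continuous on a set of full measure, and in particular unbounded near the zero set of $f$) and cites an external result \cite[Cor.2.6]{BDCDSA} for its uniform distribution; you instead truncate, replacing $|f|$ by $\widetilde f_\ep = \max(|f|,\ep)$, which makes $g=\beta\ln\widetilde f_\ep$ bounded continuous on $\T^d$ and lets you prove the u.d.\ of $(\widetilde x_n)$ directly from Weyl's criterion and equidistribution of $(\langle(n\vartheta_0,n\vec\vartheta)\rangle)$ on $\T^{d+1}$, at the price that the auxiliary sequence now depends on $\ep$ --- which is harmless, since Lemma \ref{lem200}(ii) only asks for \emph{some} u.d.\ sequence close to $(x_n)$ at each tolerance. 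In the comparison step the paper wraps $\{|f|\le\delta\}$ in a finite union of open balls $\cT$ (so that the visit frequency of $\cT$ has an exact limit $\lambda_{\T^d}(\cT)$), whereas you work directly with the closed set $\{|f|\le 2\ep\}$ and correctly observe that, for a closed set, upper visit density is bounded above by its Haar measure (by outer approximation with open sets or Urysohn functions). Both are sound; your version is more self-contained, trading the citation to \cite{BDCDSA} for the truncation bookkeeping, while the paper's is slightly shorter by delegating the hardest step.
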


\begin{proof}
For convenience, let
$$
x_n := n\vartheta_0 + \alpha \ln n + \beta \ln \big|f \bigl( \langle ( n\vartheta_1, \ldots ,
n\vartheta_d)\rangle \bigr)  + z_n\big| \, , \quad \forall n \in \N \, .
$$
The function $g:= \beta \ln |f|$
is continuous on a set of full $\lambda_{\T^{d}}$-measure, and so
\cite[Cor.2.6]{BDCDSA} together with Lemma \ref{lem200}(v) shows that the
sequence $(\widetilde{x}_n)$ with
$$
\widetilde{x}_n := n\vartheta_0 + \alpha \ln n + \beta \ln \big|f
\bigl( \langle (
n\vartheta_1 , \ldots , n\vartheta_d) \rangle\bigr) \big| \, , \quad \forall n \in \N \, ,
$$
is u.d.\ mod $1$ for every $\alpha, \beta \in \R$. Given $0<
\varepsilon \le 1$, choose $0< \delta < \frac12 \varepsilon/( 1 +|\beta|)$
so small that $\lambda_{\T^{d}}\bigl(  \{t\in \T^d : |f(t)| \le \delta
\}\bigr) < \varepsilon$. There exists $\cT\subset \T^d$ such that $\cT$ is a
finite union of open balls, 
$\cT \supset \{t\in \T^d :|f(t)|\le \delta \}$, and
$\lambda_{\T^{d}} (\cT)<\varepsilon$. Observe now
that if $\langle ( n\vartheta_1, \ldots , n \vartheta_d)\rangle \not \in
\cT$ and $|z_n|< \delta^2$ then
$$
|x_n - \widetilde{x}_n| =  |\beta| \left| \ln \bigg|  1 + 
    \frac{z_n}{f\bigl( \langle ( n\vartheta_1 , \ldots ,  n\vartheta_d)
      \rangle \bigr)} \bigg| \right|
\le 2 |\beta| \delta < \varepsilon \, .
$$
By the $\Q$-independence of $1, \vartheta_1, \ldots , \vartheta_d$,
the sequence $\bigl( ( n\vartheta_1 , \ldots,
n\vartheta_d) \bigr)$ is u.d.\ mod $1$ in $\R^d$, see e.g.\ \cite[Exp.I.6.1]{KN}, and so
$$
\lim\nolimits_{N\to \infty} \frac{\# \{ n \le N : \langle ( n
  \vartheta_1, \ldots , n \vartheta_d ) \rangle \in
  \cT \}}{N} =
\lambda_{\T^{d}}(\cT) < \varepsilon \, .
$$
With this and $\lim\nolimits_{n\to \infty} z_n = 0$, it follows that
\begin{align*}
& \overline{\lim}_{N\to \infty}  \frac{\# \{  n \le N : |x_{n}
  - \widetilde{x}_{n}| > \varepsilon\}}{N} \\[1mm]
& \le
\overline{\lim}_{N\to \infty}  \frac{\# \{ n \le N :  \langle ( n
  \vartheta_1 , \ldots , n \vartheta_d) \rangle  \in \cT
  \: \:  \mbox{\rm or} \: \: |z_{n}|\ge \delta^2 \}}{N} \\[1mm]
& \le
 \overline{\lim}_{N\to \infty}  \frac{\# \{ n \le N :  \langle (n
  \vartheta_1  , \ldots , n \vartheta_d) \rangle  \in \cT
  \}}{N}  + \overline{\lim}_{N\to \infty}  \frac{\# \{ n \le
  N :  |z_{n}|\ge \delta^2 \}}{N} \\[1mm]
& = \lambda_{\T^{d}}(\cT) + 0 < \varepsilon \, ,
\end{align*}
and an application of Lemma \ref{lem200}(ii) completes the proof.
\end{proof}

The assertion of the next, final lemma is very plausible indeed. Its
proof, however, is somewhat technical and hence deferred to an
appendix for the reader's convenience.

\begin{lem}\label{lem2Omega}
Let $d\in\N$, $p_1, \ldots , p_d\in \Z$, and $\beta \in \R \setminus
\{0\}$. Then there exists $u \in \R^d$
such that the sequence
$$
\Bigl( p_1 n \vartheta_1 + \ldots + p_d n \vartheta_d + \beta \ln
\big|u_1 \cos (2\pi n \vartheta_1) + \ldots + u_d \cos (2\pi n 
\vartheta_d) \big| \Bigr)
$$
is not u.d.\ {\rm mod} $1$ whenever $\vartheta_1, \ldots ,
\vartheta_d\in \R$ and the $d+1$ numbers $1, \vartheta_1, \ldots ,
\vartheta_d$ are $\Q$-independent.
\end{lem}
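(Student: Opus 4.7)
The plan is first to apply Weyl's equidistribution criterion to recast the claimed non-u.d.\ mod~$1$ property as the non-vanishing, for some $k\in\Z\setminus\{0\}$, of an explicit Fourier integral on $\T^d$. Setting $F_u(t):=\sum_{j=1}^d u_j\cos(2\pi t_j)$ and $\mathbf{p}:=(p_1,\ldots,p_d)$, for $u\neq 0$ the zero set $\{F_u=0\}\subset\T^d$ has $\lambda_{\T^d}$-measure zero, and the bounded integrand $t\mapsto e^{2\pi i k(\mathbf{p}\cdot t+\beta\ln|F_u(t)|)}$ is Riemann integrable. Equidistribution of $(\langle(n\vartheta_1,\ldots,n\vartheta_d)\rangle)$ in $\T^d$ (granted by $\Q$-independence of $1,\vartheta_1,\ldots,\vartheta_d$) therefore yields
\[
\lim_{N\to\infty}\frac{1}{N}\sum_{n=1}^N e^{2\pi i k x_n}\;=\;I(u,k)\;:=\;\int_{\T^d} e^{2\pi i k\mathbf{p}\cdot t}\,|F_u(t)|^{2\pi i k\beta}\,\mathrm{d}\lambda_{\T^d}(t),
\]
so by Weyl's criterion $(x_n)$ is u.d.\ mod~$1$ iff $I(u,k)=0$ for every $k\in\Z\setminus\{0\}$. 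The key observation is that $I(u,k)$ is independent of $\vartheta$, so exhibiting any pair $(u,k)$ with $I(u,k)\neq 0$ yields a $u$ that works simultaneously for all admissible $\vartheta$.

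The next step is to produce such a pair, starting from small~$d$. For $d=1$ I would take $u_1=1$: using that $|\cos(2\pi t)|$ has period $\tfrac12$ and is even, $I((1),k)$ vanishes for odd $kp_1$ but, for even $kp_1$, reduces after the substitution $\phi=\pi t$ to the Beta-type integral $\tfrac{2}{\pi}\int_0^{\pi/2}\cos(kp_1\phi)\,(\cos\phi)^{2\pi i k\beta}\,\mathrm{d}\phi$. The classical evaluation
$\int_0^{\pi/2}\cos(2m\phi)\cos^{\alpha}\phi\,\mathrm{d}\phi= \pi\Gamma(\alpha+1)/(2^{\alpha+1}\Gamma(1+\alpha/2+m)\Gamma(1+\alpha/2-m))$
exhibits this as a ratio of $\Gamma$-values whose arguments, for $\alpha=2\pi i k\beta$ with $\beta\neq 0$, have nonzero imaginary part and hence avoid the poles of $\Gamma$; since $\Gamma$ has no zeros, the ratio is nonzero. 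Taking $k=1$ if $p_1$ is even and $k=2$ otherwise finishes $d=1$. For $d=2$ I would use $u=(1,1)$, whereupon the sum-to-product identity $\cos(2\pi t_1)+\cos(2\pi t_2)=2\cos(\pi(t_1+t_2))\cos(\pi(t_1-t_2))$ makes $|F_u|^{2\pi i k\beta}$ a product of two functions, one in $t_1+t_2$ and one in $t_1-t_2$; Fourier-expanding each factor by the $d=1$ formula and matching to the linear phase isolates a single nonzero coefficient (after, if necessary, doubling $k$ to satisfy a parity constraint).

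The hard part will be $d\ge 3$, and this is where Theorem~\ref{lemx} of the appendix enters the picture. No analogue of the sum-to-product factorisation is available, and $|F_u|^{2\pi i k\beta}$, while of unit modulus a.e., oscillates rapidly near the codimension-one zero locus $\{F_u=0\}$, rendering direct Fourier analysis delicate. A natural route is induction on~$d$: given a $(d{-}1)$-dimensional configuration $u'$ with $I'(u',k)\neq 0$, adjoin a component $u_d$ and expand $I$ in $u_d$; the first potentially non-vanishing order is $|kp_d|$, with coefficient essentially $\int_{\T^{d-1}} e^{2\pi i k\mathbf{p}'\cdot t'}\,|F_{u'}(t')|^{2\pi i k\beta-|kp_d|}\,\mathrm{d}t'$. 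The analytic difficulties are that this expansion is non-uniform near $\{F_{u'}=0\}$ and that the negative real exponent on $|F_{u'}|$ may spoil absolute integrability; handling these (for instance via a Mellin-transform representation of $|\cdot|^{2\pi i k\beta}$, analytic continuation in an auxiliary parameter, or a careful truncation around $\{F_{u'}=0\}$) is precisely the lengthy technical work carried out in the appendix to prove Theorem~\ref{lemx}, which then delivers the required nonzero $I(u,k)$ and hence the lemma.
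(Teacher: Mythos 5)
Your reduction of the lemma to exhibiting a nonzero Fourier coefficient of $\lambda_{\T^d}\circ\Lambda_u^{-1}$ is exactly the paper's reduction of Lemma~\ref{lem2Omega} to Theorem~\ref{lemx}, and your correct observation that the resulting integral $I(u,k)$ is independent of $\vartheta$ is the point that makes a single $u$ work for all admissible $\vartheta$ simultaneously. Your treatments of $d=1$ and $d=2$ are likewise in the same spirit as the paper's: the $d=1$ evaluation reduces to the Gamma-function ratio of Lemma~\ref{lema2} (the paper simply fixes $k=2$ so the parity issue you discuss never arises), and the sum-to-product factorisation for $d=2$ is precisely the computation inside Lemma~\ref{lema6}.

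The genuine gap is $d\ge 3$. You identify the analytic obstacles honestly (non-uniformity of a $u_d$-expansion near $\{F_{u'}=0\}$, failure of absolute integrability from the effective exponent), but you do not resolve them; you instead gesture at Mellin transforms, analytic continuation, or truncation, and then invoke Theorem~\ref{lemx} itself as the place where ``the lengthy technical work'' is carried out. That makes the argument circular at its core, since Theorem~\ref{lemx} is precisely what needs proving. Moreover, the route you sketch — a direct induction expanding $I(u,k)$ in powers of $u_d$ and isolating the lowest-order coefficient — is not what the paper does and would indeed run into the difficulties you flag. The paper's actual mechanism is to normalise by the dominant coordinate and restrict to the cone $\cE_d$ so that the argument of $i_{p_d,2\pi\beta}$ stays in $(-1,1)$; there the one-variable function $i_{p,\beta}$ is shown to be real-analytic (Lemma~\ref{lema6}, via a Fourier-series identity and a ${}_3F_2$ analytic continuation), and then $u\mapsto J(u)$ is shown to be real-analytic and \emph{non-constant} on each component of $\cE_d$ (Lemma~\ref{lema7}), so its zero set there is nowhere dense (Lemma~\ref{lema8}). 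That non-constructive ``analytic-and-non-constant, hence nonzero somewhere'' step is what replaces the explicit evaluation you attempt, and it is the missing idea in your proposal for $d\ge 3$.
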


\begin{proof}
See Appendix A.
\end{proof}

\section{A Characterization of Benford's Law}\label{sec3}

Given a positive integer $d$ and real numbers $a_1, a_2, \ldots, a_{d-1}, a_d$ with
$a_d\ne 0$, consider the autonomous, $d$-th order linear difference equation (or recursion)
\begin{equation}\tag{\ref{eq3}}\label{eq3n1}
x_n = a_1 x_{n-1} + a_2 x_{n-2}+ \ldots + a_{d-1} x_{n-d+1} + a_d
x_{n-d}  \, , \quad \forall n
\ge d+1 \, . 
\end{equation} 
The goal of this section is to provide a necessary and sufficient
condition on the coefficients $a_1,
a_2, \ldots, a_{d-1}, a_d$ guaranteeing that every solution $(x_n)$ of (\ref{eq3n1})
is either Benford or trivial (identically zero); see Theorem
\ref{thm380} below. To make the analysis as transparent as possible, a
standard matrix-vector approach is utilized. Thus
associate with (\ref{eq3n1}) the matrix
\begin{equation}\label{eq3n2}
A= \left[
\begin{array}{ccccc}
a_1 & a_2 & \cdots & a_{d-1} & a_d \\
1 & 0 & \cdots & 0 & 0 \\
0 & 1 & 0 & \cdots & 0  \\
\vdots & \ddots & \ddots & \ddots & \vdots \\
0 & \cdots & 0 & 1 & 0
\end{array}
\right] \in \R^{d\times d} \, ,
\end{equation}
which is invertible since $a_d \ne 0$, and recall that, given
initial values $x_1, \ldots , x_d\in \R$, the solution $(x_n)$ of
(\ref{eq3n1}) can be expressed in the form
\begin{equation}\label{eq3n3}
x_n = (e^{(d)})^\top A^n y \, , \quad \mbox{where }\: y = A^{-1} \left[
\begin{array}{c}
x_d \\ \vdots \\ x_{1}
\end{array}
\right] \in \R^d \, ;
\end{equation}
here $e^{(1)}, \ldots , e^{(d)}$ represent the standard basis of $\R^d$;
$A^n$ is the $n$-th power of $A$, i.e.\ $A^n = A A^{n-1}$ for $n\ge 1$
and $A^0= I_d$, the $d\times d$-identity matrix; and $x^\top$ denotes the transpose of $x\in \R^d$, with $x^\top y$ being
understood as the real number $\sum_{j=1}^d x_j y_j$. As suggested by
(\ref{eq3n3}), in what follows, conditions are studied under which
$(x^\top A^n y)$ is $b$-Benford, where $x,y\in \R^d$ and $A$ is any given real $d\times
d$-matrix. Towards the end of this section, these conditions are, via
(\ref{eq3n2}) and (\ref{eq3n3}), specialized to solutions $(x_n)$ of
the linear difference equation (\ref{eq3n1}). Note that with $A$ given
by (\ref{eq3n2}), the sequence $(x^\top A^n y)$ is a solution of
(\ref{eq3n1}) for {\em every\/} $x,y\in \R^d$; see also the proof of
Lemma \ref{lem35} below.

As throughout the entire article, in the subsequent analysis of powers
of matrices, $d$ always denotes a fixed but otherwise unspecified positive integer.
For every $x\in \R^d$, the number $|x|\ge 0$ is the
{\em Euclidean norm\/} of $x$, i.e.\ $|x|=
\sqrt{x^\top x} = \sqrt{\sum_{j=1}^d x_j^2}$. A vector $x\in \R^d$ is
a {\em unit\/} vector if $|x|=1$. For every matrix $A\in\R^{d\times d}$, its spectrum, i.e.\ the set of its eigenvalues, is denoted by $\sigma(A)$.
Thus $\sigma(A)\subset \C$ is non-empty, contains at most $d$ numbers and is symmetric w.r.t.\ the real axis, i.e., all
non-real elements of $\sigma(A)$ come in complex-conjugate pairs. The number $r_{\sigma}(A):= \max\{|\lambda|:\lambda \in \sigma(A)\}\ge 0$
is the {\em spectral radius\/} of $A$. Note that $r_{\sigma}(A)>0$ unless $A$ is {\em nilpotent},
i.e.\ unless $A^N =0$ for some $N\in \N$; in the latter case $A^d = 0$
as well. For every $A\in \R^{d\times
  d}$, the number $|A|$ is the ({\em spectral\/})
{\em norm\/} of $A$ as induced by $|\cdot|$, i.e.\ $|A|
=\max \{|Ax| : |x|=1\}$. It is
well-known that $|A| = \sqrt{r_{\sigma} (A^\top A)}\ge r_{\sigma}(A) =
\lim_{n\to \infty} |A^n|^{1/n}$.

As will become clear shortly, some Benford properties related to
linear difference equations can be characterized in terms of the
spectrum of an associated matrix. The following
terminology turns out to be useful in this context.

\begin{defn}\label{def31}
Let $b\in \N\setminus \{1\}$. A non-empty set $\cZ \subset \C$ with
$|z|=r$ for some $r>0$ and all $z\in \cZ$, i.e.\ $\cZ\subset r\mS$, is
$b${\em -nonresonant\/} if the associated set 
\begin{equation}\label{eq31}
\Delta_{\cZ}:= \left\{
1 + \frac{\arg z - \arg w}{2\pi} : z,w \in \cZ
\right\} \subset \R 
\end{equation}
satisfies both of the following conditions:
\begin{enumerate}
\item $\Delta_{\cZ} \cap \Q = \left\{ 1 \right\}$;
\item $\log_b r \not \in \mbox{\rm span}_{\Q} \Delta_{\cZ}$.
\end{enumerate}
An arbitrary set $\cZ\subset \C$ is $b$-nonresonant if, for
every $r>0$, the set $\cZ\cap r\mS$ is either $b$-nonresonant or empty;
otherwise, $\cZ$ is $b${\em -resonant}.
\end{defn}

Note that the set $\Delta_{\cZ}$ in (\ref{eq31}) automatically satisfies
$1 \in \Delta_{\cZ} \subset (0,2)$ and is
symmetric w.r.t.\ the point $1$, i.e.\ $\Delta_{\cZ} = 2 -
\Delta_{\cZ}$. The empty set $\varnothing$ and the singleton $\{0\}$ are
$b$-nonresonant for every $b\in \N\setminus \{1\}$. Also, if $\cZ$ is
$b$-nonresonant then so is every $\cW \subset \cZ$. On the other hand,
$\cZ\subset \C$ is certainly $b$-resonant for every $b$ if either $\# (\cZ \cap r
\mS \cap \R) = 2$ for some $r>0$, in which case (i) is violated, or 
$\cZ\cap \mS\ne \varnothing$, which causes (ii) to fail. 

\begin{example}\label{exa32}
The singleton $\{z\}$ with $z\in \C$ is $b$-nonresonant if and only if either $z=0$
or $\log_b |z| \not \in \Q$. Similarly, any set $\{z, \overline{z}\}$
with $z\in \C \setminus \R$ is $b$-nonresonant if and only if $1$,
$\log_b |z|$ and $\frac1{2 \pi}\arg z$ are $\Q$-independent.
\end{example}

\begin{rem}\label{rem33}
(i) If $\cZ\subset
r\mS$ then, for every $z \in \cZ$,
$$
\mbox{\rm span}_{\Q} \Delta_{\cZ} = \mbox{\rm span}_{\Q} \left( 
\left\{ 1 \right\} \cup \left\{ 
\frac{\arg z - \arg w}{2\pi} : w \in \cZ
\right\}
\right) \, ,
$$
which shows that the dimension of $\mbox{\rm span}_{\Q} \Delta_{\cZ}$ as
a linear space over $\Q$ is at most $\# \cZ$. Also, if $\cZ\subset r\mS$ is symmetric w.r.t.\ the real axis, i.e.\
if $\overline{\cZ} = \cZ$, then condition (ii) in Definition
\ref{def31} is equivalent to $\log_b r \not \in \mbox{\rm span}_{\Q}
(\{1\} \cup \{\frac1{2 \pi} \arg z : z \in \cZ\})$; cf.\ \cite[Def.3.1]{BDCDSA}. 

(ii) The number $1$ in (\ref{eq31}) and part (i) of Definition
\ref{def31} has been chosen for convenience only; for the purpose of this
work, it could be replaced by any non-zero rational number.
\end{rem}

Recall that for the sequence $(xa^n y)$ with any $x,y\in \R$ and $a\in \R \setminus \{0\}$
to be either $b$-Benford (if $xy\ne 0$) or trivial (if
$xy=0$) it is necessary and sufficient that $\log_b|a|$ be
irrational. (This follows immediately e.g.\ from Proposition \ref{prop_dia} and
Lemma \ref{lem240}.) The following theorem, the first main result of
this article, extends this simple fact to arbitrary (finite) dimension by
characterizing the $b$-Benford property of $(x^\top A^n y)$ for any
$x,y\in \R^d$ and $A\in \R^{d\times d}$. To concisely formulate this and
subsequent results, call $(x^\top A^n y)$ and $(|A^nx|)$ with $x,y\in
\R^d$ and $A\in \R^{d\times d}$ {\em terminating\/} if, respectively, $x^\top A^n y = 0$
or $A^nx=0$ for all $n\ge d$; similarly, $(|A^n|)$ is terminating if
$A^n=0$ for all $n\ge d$. Also, recall that the asymptotic behaviour of $(A^n)$ is
completely determined by the eigenvalues of $A$, together with the
corresponding (generalized) eigenvectors. As far as Benford's Law base $b$ is
concerned, the key question turns out to be whether or not the set $\sigma
(A)$ is $b$-nonresonant. Notice that for $A=[a]\in \R^{1\times 1}$
with $a\ne 0$ the set $\sigma(A)=\{a\}$ is $b$-nonresonant if and only
if $\log_b|a|$ is irrational.

\begin{theorem}\label{thm34}
Let $A\in \R^{d\times d}$ and $b\in \N \setminus \{1\}$. Then
the following are equivalent:
\begin{enumerate}
\item For every $x,y\in \R^d$ the sequence $(x^\top A^n y)$
  is either $b$-Benford or terminating;
\item The set $\sigma(A)$ is $b$-nonresonant.
\end{enumerate}
\end{theorem}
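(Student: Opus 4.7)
The overall strategy is to reduce, via the real Jordan decomposition of $A$, the sequence $(x^\top A^n y)$ to a standard asymptotic shape in which Benford-ness (through Proposition~\ref{prop_dia}) becomes a uniform-distribution statement handled by Lemma~\ref{lem220}, Lemma~\ref{lem230}, and Lemma~\ref{lem2Omega}, with the two clauses of Definition~\ref{def31} playing distinct roles. The starting point is the expansion
\[
x^\top A^n y \;=\; \sum_{\lambda\in\sigma(A)} P_\lambda(n)\,\lambda^n,
\]
with polynomials $P_\lambda$ depending linearly on $x,y$ and satisfying $P_{\overline\lambda}=\overline{P_\lambda}$; $\Q$-linear independence of the functions $n\mapsto n^j\lambda^n$ makes the sequence terminating precisely when every $P_\lambda\equiv 0$. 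Otherwise, let $R=\max\{|\lambda|:P_\lambda\ne 0\}$, $k=\max\{\deg P_\lambda:|\lambda|=R\}$, and $\cZ^{*}=\{\lambda\in\sigma(A)\cap R\mS:\deg P_\lambda=k\}$, so that
\[
x^\top A^n y \;=\; R^n n^k\bigl(G(n)+o(1)\bigr),\qquad G(n)=\sum_{\lambda\in\cZ^{*}} c_\lambda (\lambda/R)^n,
\]
with nonzero conjugate-symmetric coefficients $c_\lambda$. Consequently $\log_b|x^\top A^n y|=n\log_b R+k\log_b n+\log_b|G(n)|+o(1)$, and Proposition~\ref{prop_dia} reduces everything to a uniform-distribution question on $\R$.

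\textbf{Forward direction (ii)$\Rightarrow$(i).} Assuming $\sigma(A)$ is $b$-nonresonant, fix $z_0\in\cZ^{*}$, set $\eta_i=(\arg z_i-\arg z_0)/(2\pi)$, and choose a $\Q$-basis $\vartheta_1,\ldots,\vartheta_K$ of $\mbox{\rm span}_{\Q}\Delta_{\cZ^{*}}$ modulo $\Q$. Writing $\eta_i=r_i+\sum_j r_{ij}\vartheta_j$ with $r_i,r_{ij}\in\Q$, clear a common denominator $L$ and split into arithmetic progressions $n=mL+\ell$; on each subsequence $G$ takes the form $F^{(\ell)}(\langle m\vartheta_1,\ldots,m\vartheta_K\rangle)$ for a trigonometric polynomial $F^{(\ell)}$ on $\T^K$. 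Here Definition~\ref{def31}(i) is exactly what keeps the Fourier modes indexed by distinct $z_i$ separate (no two $\eta_i$ differ by a rational), so the nonzero $c_\lambda$ survive and $F^{(\ell)}$ vanishes only on a $\lambda_{\T^K}$-null set; Definition~\ref{def31}(ii) gives that $1,L\log_b R,\vartheta_1,\ldots,\vartheta_K$ are $\Q$-independent. Lemma~\ref{lem220} (applied with $\vartheta_0=L\log_b R$, $\alpha=k/\ln b$, $\beta=1/\ln b$) then forces each subsequence to be u.d.\ mod $1$, Lemma~\ref{lem230} reassembles them, and Proposition~\ref{prop_dia} yields the $b$-Benford property.

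\textbf{Reverse direction (i)$\Rightarrow$(ii).} Argue contrapositively: suppose some $\cZ=\sigma(A)\cap R\mS$ is $b$-resonant. \emph{Case (a).} If Definition~\ref{def31}(i) fails, pick distinct $\lambda_1,\lambda_2\in\cZ$ with $\lambda_1/\lambda_2=e^{2\pi\imath p/q}$, and choose $x,y$ supported on the Jordan blocks of $\{\lambda_1,\overline{\lambda_1},\lambda_2,\overline{\lambda_2}\}$ with constant polynomial parts $P_{\lambda_j}=c_j$ satisfying $c_1+c_2=0$. A direct calculation gives $x^\top A^{mq}y=0$ for every $m\in\N$ while the sequence is non-terminating; since $S_b(0)=0$, the relative frequency of $n\le N$ with $S_b(x^\top A^n y)\le 1$ stays $\ge 1/q>0=\log_b 1$, so $(x^\top A^n y)$ is not $b$-Benford. \emph{Case (b).} If (i) holds but (ii) fails, the relation $\log_b R=q_0+\sum_j q_j\eta_j$ with $q_j\in\Q$ allows, after clearing denominators and restricting to $n=mL$, an application of Lemma~\ref{lem2Omega} with $p_j=Lq_j$ and $\beta=1/\ln b$; selecting $x,y$ so that $G(n)$ realizes the requisite cosine combination $\sum_j u_j\cos(2\pi n\vartheta_j)$, the sequence $\log_b|x^\top A^n y|$ fails to be u.d.\ mod $1$, so by Proposition~\ref{prop_dia} the sequence is not $b$-Benford.

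\textbf{Main obstacle.} The subtlest step is the bookkeeping in the forward direction that lines up the two clauses of Definition~\ref{def31} with the single $\Q$-independence hypothesis of Lemma~\ref{lem220}: (i) secures nondegeneracy of the limit trigonometric polynomial $F^{(\ell)}$ (distinct Fourier modes), while (ii) adjoins $\log_b R$ to the angle basis, with subsequence-splitting via Lemma~\ref{lem230} needed to clear rational denominators in the decomposition of the $\eta_i$. In Case (b) of the converse, the technical heart is realizing the explicit cosine combination required by Lemma~\ref{lem2Omega} as the genuine leading-order factor of some $(x^\top A^n y)$, which forces a careful choice of $x,y$ in the generalized eigenspaces of the resonant block.
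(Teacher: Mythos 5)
Your plan mirrors the paper's proof in all essentials: both directions rest on the leading-order expansion $x^\top A^n y = R^n n^k(G(n)+o(1))$ from the (real) Jordan decomposition, with the forward implication delegated to Lemma~\ref{lem220} after splitting into arithmetic progressions (Lemma~\ref{lem230}), and the converse delegated to Lemma~\ref{lem2Omega} when part (ii) of Definition~\ref{def31} fails. One genuine divergence: in your Case~(a) you manufacture a positive-density zero set directly, whereas the paper routes this step through the more elaborate Lemma~\ref{lem36}, which characterizes when $\rho(\cN_{A,x,y})\in\{0,1\}$; your shortcut suffices for Theorem~\ref{thm34} and is arguably more transparent, though the lemma earns its keep elsewhere in the paper.

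There are two points you should tighten. First, both Lemma~\ref{lem220} and Lemma~\ref{lem2Omega} are stated for $d\ge 1$, so your argument silently assumes $K\ge 1$ (equivalently $L_0\ge 1$ in the paper's notation); the degenerate case $K=0$, where $\cZ^{*}$ is a single real eigenvalue (forward direction) or $\log_b R$ is itself rational (Case~(b)), needs a separate short argument via Lemma~\ref{lem240}/periodicity, exactly as the paper gives. Second, in Case~(b) it is essential that $G(n)$ equal the cosine combination \emph{exactly}, with no $o(1)$ term: unlike Lemma~\ref{lem220}, Lemma~\ref{lem2Omega} carries no perturbation $z_n\to 0$, and an $o(1)$ error inside $\ln|\cdot|$ is not harmless near the zeros of the cosine sum. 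The paper achieves exactness by taking $x,y$ in genuine eigenspaces via Lemma~\ref{lem24a}, so that $P_\lambda$ is constant and no other eigenvalues contribute; ``selecting $x,y$ so that $G(n)$ realizes the requisite cosine combination'' should be spelled out in that way, otherwise the argument does not close.
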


The proof of Theorem \ref{thm34} is facilitated by two simple
observations, the first of which is an elementary fact from linear
algebra.

\begin{lem}\label{lem24a}
Let $L\in \{1, \ldots , d\}$ and assume $y^{(1)}, \ldots , y^{(L)}\in \R^d$ are linearly
independent. Then, given any $u \in \R^L$, there exists $x\in \R^d$
such that $x^\top y^{(\ell )} = u_{\ell}$ for every $1\le
\ell \le L$.
\end{lem}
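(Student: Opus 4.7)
The plan is to reduce the statement to the surjectivity of a linear map by collecting the $L$ scalar conditions into a single matrix equation. Specifically, I would let $Y \in \R^{d \times L}$ denote the matrix whose $\ell$-th column is $y^{(\ell)}$; then the requirement that $x^\top y^{(\ell)} = u_\ell$ for every $\ell$ is equivalent to the linear equation $Y^\top x = u$ in the unknown $x \in \R^d$. So the lemma reduces to showing that the map $x \mapsto Y^\top x$ from $\R^d$ to $\R^L$ is surjective.

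For the surjectivity, the key input is the hypothesis that $y^{(1)}, \ldots, y^{(L)}$ are linearly independent, which says exactly that $Y$ has rank $L$. Since the row rank of a matrix equals its column rank, $Y^\top$ also has rank $L$, and as an $L \times d$ matrix this forces its image to be all of $\R^L$. Hence for any prescribed $u \in \R^L$ the equation $Y^\top x = u$ admits a solution.

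As an alternative, entirely equivalent, route, I would extend $y^{(1)}, \ldots, y^{(L)}$ to a basis $y^{(1)}, \ldots, y^{(d)}$ of $\R^d$ (using $L \le d$), take the associated dual basis $z^{(1)}, \ldots, z^{(d)} \in \R^d$ characterised by $(z^{(i)})^\top y^{(j)} = \delta_{ij}$, and set
$$
x \;:=\; \sum_{\ell=1}^L u_\ell\, z^{(\ell)}.
$$
A direct computation then gives $x^\top y^{(\ell)} = u_\ell$ for $1 \le \ell \le L$, producing an explicit witness.

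There is essentially no obstacle here: the statement is a textbook consequence of the rank-nullity theorem (or of the existence of dual bases), and both arguments above are entirely routine. I would present whichever of the two phrasings meshes better with the subsequent proof of Theorem \ref{thm34}, where the lemma is presumably invoked to pick $x$ realising prescribed inner products against a linearly independent family of eigenvectors or generalized eigenvectors of $A$.
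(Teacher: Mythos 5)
Your first argument and the paper's proof both reduce the claim to the surjectivity of the same linear map $\Phi : x \mapsto Y^\top x = \sum_{\ell=1}^L (x^\top y^{(\ell)}) e^{(\ell)}$, so the overall strategy coincides. The only point of divergence is the justification of surjectivity: you appeal to the equality of row and column rank to conclude that $Y^\top \in \R^{L\times d}$ has rank $L$ and hence full range, whereas the paper evaluates $\Phi$ on $y^{(1)}, \ldots, y^{(L)}$ themselves and observes that $\det[\Phi(y^{(1)}), \ldots, \Phi(y^{(L)})] = \det[(y^{(\ell)})^\top y^{(k)}]_{\ell,k=1}^L$ is the Gram determinant, which is non-zero precisely because the $y^{(\ell)}$ are linearly independent; thus the image of $\Phi$ already contains a basis of $\R^L$. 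Either textbook fact does the job; the Gram-determinant route has the small aesthetic advantage of avoiding the row-rank/column-rank theorem and of exhibiting $L$ explicit preimages whose images span $\R^L$. Your dual-basis alternative is likewise correct and is the most concrete of the three, giving a closed-form witness $x$.
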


\begin{proof}
The function
$$
\Phi : \left\{
\begin{array}{ccl}
\R^d & \to & \R^L \, , \\
x & \mapsto & \sum_{\ell=1}^L (x^\top y^{(\ell )}) e^{( \ell )} \, ,
\end{array}
\right.
$$
is linear, and since the (Gram) determinant
$$
\det [\Phi(y^{(1)}), \ldots
, \Phi (y^{(L)})] = \det[(y^{( \ell )})^\top y^{( k )}]_{\ell, k =
  1}^L
$$ is non-zero, $\Phi$ is also onto.
\end{proof}

A second observation clarifies the role of condition (i) in Definition \ref{def31} and
may also be of independent interest. Recall that a set $\cN \subset \N$
{\em has density\/} if 
$$
\rho(\cN) := \lim\nolimits_{N\to \infty} \frac{\#  \{ n \le N : n
  \in \cN \} }{N}
$$
exists. In this case, $\rho(\cN)$ is called the {\em density\/} of
$\cN$. Clearly, $0\le \rho(\cN) \le 1$ whenever $\cN$ has density. Not all
subsets of $\N$ have density, but those most relevant for Theorem
\ref{thm34} do.

\begin{lem}\label{lem35}
For every $A\in \R^{d\times d}$ and $x,y \in \R^d$, let
\begin{equation}\label{eq33}
\cN_{A,x,y}:= \{ n \in \N : x^\top A^n y = 0 \} \, .
\end{equation}
Then $\cN_{A,x,y}$ has density, and $\rho (\cN_{A,x,y}) \in \Q \cap [0,1]$.
\end{lem}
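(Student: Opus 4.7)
The plan is to observe that $a_n := x^\top A^n y$ is a linear recurrence sequence and to apply the Skolem--Mahler--Lech theorem to its zero set.

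First I would note that $(a_n)$ satisfies a linear recurrence over $\R$ of order at most $d$. Indeed, by the Cayley--Hamilton theorem, if $p(t)=t^d + c_{d-1}t^{d-1}+ \cdots + c_0$ is the characteristic polynomial of $A$, then $p(A)=0$; multiplying by $A^{n-d}$ and contracting with $x$ on the left and $y$ on the right yields
\[
a_n = -c_{d-1} a_{n-1} - \cdots - c_0 a_{n-d} \qquad \text{for all } n \ge d.
\]
Equivalently, the generating function $\sum_{n\ge 0} a_n z^n = x^\top (I- zA)^{-1} y$ is rational in $z$, so $(a_n)$ is a classical linear recurrence sequence over the field $\R\subset\C$ of characteristic zero.

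Next I would invoke Skolem--Mahler--Lech: the zero set of such a sequence is the union of a finite set and finitely many complete arithmetic progressions. Hence there exist $n_0 \in \N$ and $L\in\N$ such that, for every $n\ge n_0$, membership $n\in\cN_{A,x,y}$ depends only on $n \bmod L$. Letting $S\subset\{0,1,\ldots,L-1\}$ be the set of residues $\ell$ for which $a_n = 0$ for all large $n\equiv\ell \pmod L$, the finitely many terms with $n<n_0$ contribute nothing in the limit and I get
\[
\rho(\cN_{A,x,y}) \; = \; \lim_{N\to\infty} \frac{\#\{ n\le N : n\in \cN_{A,x,y}\}}{N} \; = \; \frac{\#S}{L} \; \in \; \Q\cap [0,1],
\]
which is exactly the claim.

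The main obstacle is of course the appeal to Skolem--Mahler--Lech, but it is available as a standard black-box. Should a self-contained proof be preferred in the paper's style, I would instead use the Jordan decomposition of $A$ to write $a_n = \sum_j P_j(n)\lambda_j^n$ with distinct complex $\lambda_j$ and polynomials $P_j$, choose $L\in\N$ so that any ratio $\lambda_j/\lambda_k$ that is a root of unity satisfies $(\lambda_j/\lambda_k)^L = 1$, and then analyze each arithmetic progression $n = Lm+\ell$ separately. On each such progression, grouping terms by modulus $|\lambda_j^L|$ and considering the dominant exponential shows that $a_{Lm+\ell}$ is either identically zero in $m$ or nonzero for all large $m$; this reduces the density computation to counting how many of the $L$ residues are of the first type, again yielding a rational number in $[0,1]$.
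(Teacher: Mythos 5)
Your proof is correct and follows essentially the same route as the paper: Cayley--Hamilton to establish a linear recurrence for $(x^\top A^n y)$, then Skolem--Mahler--Lech to conclude that the zero set is a finite set together with finitely many arithmetic progressions, whence the density is rational. The paper cites the same theorem from the reference by Myerson and van der Poorten and draws the identical conclusion.
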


\begin{proof}
By the Cayley--Hamilton Theorem, there exist
$a_1, a_2, \ldots , a_{d-1}, a_d \in \R$ such that
$$
A^d = a_1 A^{d-1} + a_2 A_{d-2} + \ldots + a_{d-1} A + a_d I_d \, .
$$
Thus, for every $n\in \N$ and $x,y\in \R^d$,
\begin{align*}
x^\top  A^{n+d} y & = x^\top  (a_1 A^{n+d-1}  + a_2 A^{n+d-2} + \ldots + a_{d-1}A^{n+1} +
a_dA^n)y  \\
& = a_1 x^\top A^{n+d -1 } y + a_2 x^\top A^{n+d -2} y   + \ldots + a_{d-1} x^\top 
A^{n+1} y  + a_d x^\top A^n y  \, ,
\end{align*}
showing that $(x^\top A^n y)$ satisfies a linear $d$-step recursion relation
with constant coefficients. By the Skolem--Mahler--Lech Theorem
\cite[Thm.A]{MvdP}, the set $\cN_{A,x,y}$ is the union of a finite (possibly empty) set $\cN_0$
and a finite (possibly zero)
number of lattices, i.e.
\begin{equation}\label{eq34}
\cN_{A,x,y} = \cN_0 \cup \bigcup\nolimits_{\ell =1}^{L} \{ n N_{\ell} +
M_{\ell} : n\in \N \} \, ,
\end{equation}
where $L$ is a nonnegative integer, and $M_{\ell},N_{\ell} \in \N$
for $1\le \ell \le L$. From (\ref{eq34}) it is clear that $\cN_{A,x,y}$
has density, and $\rho (\cN_{A,x,y})$ is a rational number, in fact
$\rho(\cN_{A,x,y}) \cdot \mbox{\rm lcm} \{ N_1 , \ldots , N_{L} \} $
is a (nonnegative) integer.
\end{proof}

By using information about $\sigma(A)$, more can be said about the
possible values of $\rho (\cN_{A,x,y})$ in Lemma \ref{lem35}. In order
to concisely state the following observation, call a set $\cN \subset \N$ {\em co-finite\/} if $\N
\setminus \cN$ is finite. With this, $(x^\top A^n y)$ is
terminating precisely if $\cN_{A,x,y}$ is co-finite.

\begin{lem}\label{lem36}
For every $A\in \R^{d\times d}$ the following three statements are
equivalent:
\begin{enumerate}
\item For every $x,y\in \R^d$ the set $\cN_{A,x,y}$ in {\rm
    (\ref{eq33})} is either finite or co-finite;
\item $\rho (\cN_{A,x,y}) \in \{0,1\}$ for every $x,y \in \R^d$;
\item For every $r>0$ either $\Delta_{\sigma(A) \cap r\mS} \cap \Q =
  \{  1 \}$ or $\sigma (A) \cap r\mS = \varnothing$.
\end{enumerate}
\end{lem}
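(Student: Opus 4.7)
My plan is to establish the chain (i) $\Leftrightarrow$ (ii) $\Leftrightarrow$ (iii), with the first equivalence being a direct structural consequence of Lemma \ref{lem35} and the second being the substantive content. For (i) $\Leftrightarrow$ (ii), I would use the decomposition $\cN_{A,x,y} = \cN_0 \cup \bigcup_{\ell=1}^L \{nN_\ell + M_\ell : n \in \N\}$ already established in the proof of Lemma \ref{lem35}: with $N := \mbox{\rm lcm}\{N_1, \ldots, N_L\}$, membership of $n$ in $\cN_{A,x,y}$ depends only on $n \bmod N$ once $n > \max_\ell M_\ell$, so $\rho(\cN_{A,x,y})$ equals the fraction of residue classes mod $N$ that are eventually covered. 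Density $0$ then forces $L = 0$ (so $\cN_{A,x,y} = \cN_0$ is finite), while density $1$ forces every residue class to be covered (so $\cN_{A,x,y}$ is co-finite); the converse implications are immediate.

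For (iii) $\Rightarrow$ (ii), fix $x, y \in \R^d$; I may assume $\cN_{A,x,y}$ is infinite, whence Lemma \ref{lem35} provides an arithmetic progression $\{nN + M : n \in \N\} \subset \cN_{A,x,y}$. Writing the complex Jordan expansion $c_n := x^\top A^n y = \sum_{\lambda \in \sigma(A)} p_\lambda(n) \lambda^n$ with polynomials $p_\lambda$, the vanishing along this progression yields
$$
\sum_{\lambda \in \sigma(A)} \lambda^M\, p_\lambda(nN + M)\, (\lambda^N)^n = 0 \qquad \text{for all } n \in \N.
$$
Condition (iii) forces the powers $\lambda^N$ to be pairwise distinct across $\sigma(A)$: if $\lambda^N = \mu^N$ with $\lambda \ne \mu$, then $|\lambda| = |\mu| = r$ and $1 + (\arg\lambda - \arg\mu)/(2\pi) \in \Q \cap \Delta_{\sigma(A) \cap r\mS} \setminus \{1\}$, contradicting (iii). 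Linear independence of the sequences $(n^j \mu^n)_{n \in \N}$ over distinct $\mu$ and $j \ge 0$ then forces $p_\lambda \equiv 0$ for every nonzero $\lambda$; the $\lambda = 0$ contribution vanishes for $n \ge d$ by nilpotency. Hence $c_n = 0$ for all $n \ge d$, so $\cN_{A,x,y}$ is co-finite and $\rho(\cN_{A,x,y}) = 1$.

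The contrapositive of (ii) $\Rightarrow$ (iii) is the main task. Assuming (iii) fails, I fix distinct $\lambda, \mu \in \sigma(A) \cap r\mS$ with $(\arg\lambda - \arg\mu)/(2\pi) = p/q$ in lowest terms, $q \ge 2$. Working inside the real Jordan form of $A$, I restrict $y$ to the proper real eigenvectors (for real eigenvalues) or real rotation-invariant planes (for complex-conjugate pairs) associated with $\lambda$ and $\mu$, and use Lemma \ref{lem24a} on the resulting real basis to pick $x$ with prescribed real components. This yields $x^\top A^n y = r^n g(n)$ with $g$ a nontrivial trigonometric polynomial: either $g(n) = \cos(n\alpha) - \cos(n\beta)$ (where $\alpha = \arg\lambda$, $\beta = \arg\mu$) in the non-degenerate sub-cases, or $g(n) = \sin(n\alpha)$ in the degenerate sub-case $\mu = \bar\lambda$. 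In both settings the factor $\sin(n\pi p/q)$ (appearing directly in the degenerate case, or via the product-to-sum identity $\cos(n\alpha) - \cos(n\beta) = -2\sin(n(\alpha+\beta)/2)\sin(n\pi p/q)$) vanishes precisely on $q\Z$, giving zero-density at least $1/q > 0$; conversely $g \not\equiv 0$ by a direct check exploiting the distinctness of $\lambda$ from $\mu$ and from $\bar\mu$. Since a nontrivial sequence governed by a reversible linear recurrence cannot vanish on a set of density $1$, the zero-density of $g$ is strictly less than $1$. Hence $\rho(\cN_{A,x,y}) \in (0,1)$, contradicting (ii).

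The principal obstacle is this last step: carrying out the real construction uniformly across the several sub-cases (two real eigenvalues of opposite sign, mixed real/complex, two distinct complex-conjugate pairs, or a single self-resonant conjugate pair) while keeping all inner products real, and verifying both bounds on the zero-density of $g$. The lower bound $\ge 1/q$ is immediate from the resonance factor; the strict upper bound rests on the short but essential observation that a nontrivial sequence obeying a reversible linear recurrence (provided here by the invertibility of $A$ on the invariant subspace spanned by the eigenvectors for $\lambda$, $\mu$ and their conjugates) cannot vanish on a density-$1$ set.
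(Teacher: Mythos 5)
Your proposal is correct but takes a genuinely different route on the sufficiency direction (iii)$\Rightarrow$(i)/(ii). The paper isolates the dominant ($\sigma^{++}$) terms in the Jordan expansion of $x^\top A^n y$, passes to the limit along the arithmetic progression supplied by Skolem--Mahler--Lech, and then invokes Lemma~\ref{lem34c} twice in a rather delicate multi-stage case analysis. You instead observe that (iii) forces $\lambda\mapsto\lambda^N$ to be injective on $\sigma(A)\setminus\{0\}$ for \emph{every} $N\in\N$ (otherwise some $1+\frac{\arg\lambda-\arg\mu}{2\pi}$ would lie in $\Q\cap\Delta_{\sigma(A)\cap r\mS}\setminus\{1\}$), so that the vanishing of $\sum_\lambda \lambda^M p_\lambda(nN+M)(\lambda^N)^n$ along the progression forces \emph{all} polynomial coefficients $p_\lambda$ to vanish, by linear independence of the exponential-polynomial sequences $(n^j\mu^n)$ for pairwise distinct bases $\mu$. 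This bypasses Lemma~\ref{lem34c} and the dominant-term bookkeeping entirely and is arguably cleaner, at the modest cost of handling the nilpotent ($\lambda=0$) contribution for $n\geq d$, which you do note. You also close the equivalence as (i)$\Leftrightarrow$(ii) directly from the Skolem--Mahler--Lech decomposition (density as fraction of covered residue classes) rather than via the cycle (i)$\Rightarrow$(ii)$\Rightarrow$(iii)$\Rightarrow$(i) as in the paper; both work. For (ii)$\Rightarrow$(iii) your plan matches the paper's in substance --- cases (\ref{eqss1})--(\ref{eqss3}), explicit $x,y$ via Lemma~\ref{lem24a} --- though the paper pins down the zero set explicitly as a union of arithmetic progressions to get $0<\rho<1$, whereas you obtain the upper bound $\rho<1$ more conceptually by noting that a nontrivial solution of a reversible linear recurrence (here, $A$ restricted to the invariant subspace for $\lambda,\mu,\overline{\lambda},\overline{\mu}$, which excludes $0$) cannot vanish co-finitely; this is a valid and slightly slicker alternative, provided you verify $g\not\equiv 0$ in each sub-case as you indicate.
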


\begin{proof}
Clearly (i)$\Rightarrow$(ii), because $\rho(\cN)=0$ or $\rho(\cN)=1$
whenever $\cN$ is finite or co-finite, respectively.

Next, to establish the implication (ii)$\Rightarrow$(iii), assume (ii)
but suppose (iii) did not hold. (Note that this is possible only if $d\ge2$.) Thus $\#
(\Delta_{\sigma(A) \cap r\mS} \cap \Q)\ge 2$ for some $r>0$, which in
turn entails one of the following three possibilities: Either
\begin{equation}\label{eqss1}
\mbox{\rm both $-r$ and $r$ are eigenvalues of $A$,}
\end{equation}
or
\begin{equation}\label{eqss2}
\mbox{\rm $A$ has an eigenvalue $\lambda \in \C \setminus \R$ with
  $|\lambda| = r$ and $\frac1{2\pi} \arg \lambda>0$ rational,}
\end{equation}
or
\begin{align}\label{eqss3}
& \mbox{\rm $A$ has two eigenvalues $\lambda_1 , \lambda_2 \in \C \setminus \R$
  with $|\lambda_1| = |\lambda_2| = r$ and $\arg \lambda_1 > \arg \lambda_2 > 0$ }
\nonumber \\
& \mbox{\rm such
  that at least one of the two numbers $\frac1{2\pi} (\arg \lambda_1 \pm
  \arg \lambda_2)$ is rational.} 
\end{align}
Note that these cases are not mutually
exclusive, and (\ref{eqss3}) can occur only for $d\ge 4$. 

In case
(\ref{eqss1}), let $u,v\in \R^d$ be eigenvectors of $A$ corresponding
to the eigenvalues $-r,r$, respectively. Pick $x\in \R^d$ such that
$x^\top u = x^\top v = 1$. This is possible
because $u,v$ are linearly independent; see Lemma \ref{lem24a}. Then,
with $y:= u+v$,
$$
x^\top A^n y  = x^\top \bigl( (-r)^n u + r^n v \bigr) = r^n
\bigl( (-1)^n + 1 \bigr) \, , \quad \forall n \in \N \, ,
$$
showing that $\cN_{A,x,y} = \{2n-1 : n \in \N\}$. Thus $\rho (\cN_{A,x,y})
= \frac12\not \in \{0,1\}$, contradicting (ii). 

In case
(\ref{eqss2}), let $w\in \C^d$ be an
eigenvector of $A$ corresponding to the eigenvalue $\lambda$, and
observe that, for
every $n\in \N$,
\begin{align}\label{eqstar}
A^n \Re w  & = r^n \bigl(  \cos (n \arg \lambda) \Re w  -  \sin (n \arg
\lambda) \Im w \bigr) \, , \nonumber\\[-2.8mm]
& \\[-2.8mm]
A^n \Im w  & = r^n \bigl(  \sin (n \arg \lambda)  \Re w +  \cos (n \arg
\lambda) \Im w \bigr) \, . \nonumber
\end{align}
Again, since $\Re w , \Im w \in \R^d$ are linearly independent, it is possible to choose
$x\in \R^d$ such that $x^\top \Re w  = 1$ and $x^\top 
\Im w = 0$. With $y:= \Im w $, therefore,
$$
x^\top A^n y  = r^n \sin (n \arg \lambda) \, , \quad
\forall n \in \N \, .
$$
Since $\frac1{\pi} \arg \lambda$ is rational and strictly between
$0$ and $1$, the set $\cN_{A,x,y}$ equals $N\N$ for some integer
$N \ge 2$. Thus
$0< \rho (\cN_{A,x,y}) = \frac1{N} < 1$, again contradicting (ii). 

Lastly, in case (\ref{eqss3}) let $w^{(1)} ,w^{(2)}\in \C^d$ be eigenvectors of $A$
corresponding to the eigenvalues $\lambda_1, \lambda_2$, respectively.
As seen in (\ref{eqstar}) above, for
every $n\in \N$,
\begin{align*}
A^n\Re (w^{(1)}  + w^{(2)}) =  r^n \bigl( & \cos (n\arg \lambda_1) \Re w^{(1)}  -  \sin
(n\arg \lambda_1) \Im w^{(1)}   + \\
& + \cos (n\arg \lambda_2 ) \Re w^{(2)} -  \sin (n \arg \lambda_2
) \Im w^{(2)} \bigr) \, .
\end{align*}
Again, $\Re w^{(1)} , \Im w^{(1)}  , \Re w^{(2)} , \Im w^{(2)} \in \R^d$ are linearly
independent, and so by Lemma \ref{lem24a} it is possible to choose $x \in \R^d$ such that
$x^\top  \Re w^{(1)}   = -1$, $x^\top \Im w^{(1)}   = x^\top  \Im w^{(2)} = 0$, and
$x^\top \Re w^{(2)} = 1$. Then, with $y:= \Re ( w^{(1)} + w^{(2)} )$,
\begin{align*}
x^\top A^n y  & = r^n \bigl( \cos (n \arg \lambda_2)  - \cos (n
\arg \lambda_1) \bigr) \\
& = 2 r^n \sin \left( \pi n \frac{\arg \lambda_1
    - \arg \lambda_2}{2\pi} \right) \sin \left( \pi n \frac{\arg \lambda_1
    + \arg \lambda_2}{2\pi} \right) \, .
\end{align*}
Since both numbers $\frac1{2\pi} (\arg \lambda_1 \pm \arg
\lambda_2)$ are strictly between $0$ and $1$ and at least one of them is
rational, the set $\cN_{A,x,y}$ once more has a rational density that equals neither
$0$ nor $1$: From $\cN_{A,x,y} = N_1\N \cup N_2\N$ with two (not
necessarily different) integers $N_1,N_2\ge 2$, it follows that
$$
0 < \frac1{\min \{N_1,N_2\}} \le \rho (\cN_{A,x,y}) \le 1 - \frac1{\mbox{\rm
  lcm} \{N_1,N_2 \}} < 1
\, .
$$
Once again this contradicts (ii) and hence completes the
proof that indeed (ii)$\Rightarrow$(iii).

Finally, to show that (iii)$\Rightarrow$(i), denote the
``upper half'' of $\sigma(A)$ by
$$
\sigma^+(A) := \{ \lambda \in \sigma (A) : \Im \lambda \ge 0 \}
\setminus \{0 \} \, .
$$
Note that $\sigma^+(A)= \varnothing$ if and only if $A$ is nilpotent,
in which case clearly
$\cN_{A,x,y}$ is co-finite for all $x,y,\in \R^d$. From now on,
therefore, assume that $\sigma^+(A)\ne \varnothing$.
Recall that $A^n$ can be written in the form
\begin{equation}\label{prnew1}
A^n = \Re \left( \sum\nolimits_{\lambda \in \sigma^+(A)}
  P_{\lambda}(n) \lambda^n \right) \, , \quad \forall n \ge d \, ,
\end{equation}
where $P_{\lambda}$ is, for every $\lambda\in \sigma^+(A)$, a
(possibly non-real) matrix-valued polynomial of degree at most $d-1$,
i.e.\ $P_{\lambda}\in \C^{d\times d}$, and for all $j,k\in \{1,
\ldots, d\}$ the entry $[P_{\lambda}]_{jk} = (e^{(j)})^{\top} P_{\lambda}
e^{(k)}$ is a complex polynomial in $n$ of degree at most
$d-1$. Moreover, $P_{\lambda}$ is real, i.e.\ $P_{\lambda}\in
\R^{d\times d}$, whenever $\lambda \in \R$. The representation
(\ref{prnew1}) follows for instance from the Jordan Normal Form
Theorem. Deduce from (\ref{prnew1}) that
\begin{equation}\label{prrelast}
x^\top A^n y = \Re \left( \sum\nolimits_{\lambda \in \sigma^+(A)}
  x^\top P_{\lambda}(n)y \lambda^n  \right) =:  \Re \left( \sum\nolimits_{\lambda \in \sigma^+(A)}
  p_{\lambda}(n) \lambda^n  \right) \, , \quad \forall n \ge d \, ,
\end{equation}
with $p_{\lambda} = x^\top P_{\lambda} y$ being, for every $\lambda
\in \sigma^+(A)$, a (possibly non-real) polynomial in $n$ of degree at
most $d-1$. Clearly, if
$p_{\lambda}=0$ for every $\lambda\in \sigma^+(A)$ then $\cN_{A,x,y}$
is co-finite. From now on, therefore, assume that $p_{\lambda} \ne 0$
for at least one $\lambda\in \sigma^+ (A)$, i.e.
$$
r:= \max \{ |\lambda| : \lambda \in \sigma^+(A), p_{\lambda} \ne 0 \} > 0 \, .
$$
Denote by $k\in \N_0$ the maximal degree of the polynomials
$p_{\lambda}$ for which $|\lambda|=r$, i.e., let $k= \max\{ \mbox{\rm
  deg}\, p_{\lambda} : \lambda \in \sigma^+(A), |\lambda| = r \}$, and
consider the (non-empty) subset $\sigma^{++}$ of $\sigma^+(A)$ given by
$$
\sigma^{++} = \{\lambda \in \sigma^+(A) : |\lambda|=r, \mbox{\rm
  deg}\, p_{\lambda} = k \} \, .
$$
Note that $c_{\lambda}:= \lim_{n\to \infty}
p_{\lambda}(n)/n^k$ exists for every $\lambda \in \sigma^{++}$ and is
non-zero. With this, it follows from (\ref{prrelast}) that
\begin{equation}\label{eq3pfn1}
x^\top A^n y = r^n n^k \Re \left( 
\sum\nolimits_{\lambda \in \sigma^+(A)} \! \frac{p_{\lambda}(n)}{n^k} \left( \frac{\lambda}{r}\right)^n
\right)
=  r^n n^k \Re \left( 
\sum\nolimits_{\lambda \in \sigma^{++}} c_{\lambda} e^{\imath n \arg \lambda }
+ z_n
\right) \, ,
\end{equation}
where $(n z_n)$ is a bounded sequence in $\C$. Assume now that (iii)
holds but suppose $\cN_{A,x,y}$ was infinite. Then, by
(\ref{eq34}),
\begin{equation}\label{eq36}
\cN_{A,x,y} \supset \{nN+M : n\in \N \} \, , 
\end{equation}
with the appropriate $M,N\in \N$. Since $\lim_{n\to \infty} z_n
=0$, it follows from (\ref{eq3pfn1}) and (\ref{eq36}) that
\begin{equation}\label{eq3pfn2}
\lim\nolimits_{n\to \infty}\Re \left( 
\sum\nolimits_{\lambda \in \sigma^{++}} c_{\lambda} e^{\imath M \arg \lambda }
\left( e^{\imath N \arg \lambda}\right)^n
\right) = 0 \, .
\end{equation}
Since $c_{\lambda} e^{\imath M \arg \lambda }\ne 0$ for every $\lambda
\in \sigma^{++}$, Lemma \ref{lem34c} implies that either $e^{\imath N
  \arg \lambda_1} = e^{\pm \imath N \arg \lambda_2}$ for some
$\lambda_1, \lambda_2\in \sigma^{++}$ with $\lambda_1 \ne \lambda_2$,
or else $e^{\imath N \arg \lambda_1} = \pm 1$ for some $\lambda_1 \in
\sigma^{++}$. In the former case, at least one of the two numbers
$\frac{N}{2\pi}(\arg \lambda_1 \pm \arg \lambda_2)$ is a non-zero
integer, which in turn shows that $\# (\Delta_{\sigma(A) \cap r \mS}
\cap \Q)\ge 2$ and hence contradicts the assumed validity of (iii). In
the latter case, note first that $\# \sigma^{++}\ge 2$ because otherwise
either $c_{\lambda_1}=0$ (if $\sigma^{++} = \{\lambda_1\}\subset \R$),
which is impossible by the very definition of $\sigma^{++}$, or else
$\frac{N}{\pi} \arg\lambda_1$ is a non-zero integer (if $\sigma^{++} =
\{\lambda_1\}\subset \C \setminus \R$), which again contradicts
(iii). But $e^{\imath N \arg \lambda_1}=\pm 1$, together with
$\#\sigma^{++}\ge 2$ and (\ref{eq3pfn2}), leads to
$$
\lim\nolimits_{n\to \infty}\Re \left( 
\sum\nolimits_{\lambda \in \sigma^{++} \setminus \{\lambda_1\}}
c_{\lambda} e^{\imath M \arg \lambda }
\left( e^{ 2 \imath  N \arg \lambda}\right)^n
\right) = - \Re \left( c_{\lambda_1} e^{\imath M \arg
    \lambda_1}\right) \, ,
$$
and hence by Lemma \ref{lem34c} either $e^{2\imath N \arg \lambda_2}
=e^{\pm 2 \imath N \arg \lambda_3}$ for some $\lambda_2, \lambda_3 \in
\sigma^{++}\setminus \{\lambda_1\}$ with $\lambda_2\ne \lambda_3$, or else
$e^{2\imath N \arg\lambda_2} = \pm 1$ for some $\lambda_2 \in
\sigma^{++}\setminus \{\lambda_1\}$. As before, in the former case at least one
of the two numbers $\frac{N}{\pi} (\arg \lambda_2 \pm \arg \lambda_3)$
is a non-zero integer, contradicting (iii) again. Similarly, in
the latter case, $\frac{N}{\pi} \arg \lambda_1$ and
$\frac{2 N}{\pi}\arg \lambda_2$ are both integers, hence
$\frac1{2\pi}(\arg \lambda_1 - \arg \lambda_2)$ is rational and non-zero,
and this once more violates (iii). In summary, if (iii) holds then the set $\cN_{A,x,y}$ is necessarily
finite whenever $p_{\lambda}\ne 0$ for at least one $\lambda\in \sigma^+(A)$,
and, as seen earlier, it is co-finite otherwise. Thus (iii)$\Rightarrow$(i), and the proof is complete.
\end{proof}

\noindent
{\em Proof of Theorem \ref{thm34}:\/} To prove (i)$\Rightarrow$(ii),
assume $\sigma(A)$ is $b$-resonant. Then, for some $r>0$, either $\# (\Delta_{\sigma(A)
  \cap r \mS} \cap \Q)\ge 2$ or $\log_b r \in \mbox{\rm span}_{\Q}
\Delta_{\sigma(A) \cap r \mS}$, or both. In the former case, Lemma
\ref{lem36} guarantees the existence of $x,y\in \R^d$ for which
$0< \rho (\cN_{A,x,y})< 1$ and hence $(x^\top A^n y)$ is neither $b$-Benford nor terminating. 
As this clearly contradicts (i), it only remains to consider the case where $\# (\Delta_{\sigma(A)
  \cap r \mS} \cap \Q)\le 1$ for every $r>0$ yet $\log_b r_0 \in
\mbox{\rm span}_{\Q} \Delta_{\sigma (A) \cap r_0 \mS}$ for some
$r_0>0$. Label the elements
of $\sigma (A) \cap r_0 \mS$ as $\lambda_1, \ldots ,
\lambda_L$. Since $\overline{\sigma(A)} = \sigma(A)$,
$$
\log_b r_0 \in \mbox{\rm span}_{\Q} \Delta_{\sigma(A)\cap r_0 \mS}  = 
\mbox{\rm span}_{\Q} \left(  \{1\} \cup \left\{ \frac{\arg
      \lambda_{\ell}}{2\pi} : 1\le \ell \le L  \right\} \right) \, ;
$$
see Remark \ref{rem33}(i).
Let $L_0 + 1$ be the dimension (over $\Q$) of $\mbox{\rm
  span}_{\Q} \Delta_{\sigma (A) \cap r_0 \mS}$. Hence $L_0 \le L$, and
$L_0 \in \N$ unless $\frac1{2\pi} \arg \lambda_{\ell}$ is rational for
every $1\le \ell \le L$, in which case $L_0 = 0$. (For instance, the
latter inevitably occurs if $d=1$.) 

First consider the case of $L_0 = 0$. Here, $\log_b r_0$ and
$\frac1{2\pi}\arg \lambda_1$ are both rational, and
in fact $\lambda_1 \in \R$ because otherwise $\# (\Delta_{\sigma (A)
  \cap r_0 \mS} \cap \Q)\ge 2$. But then taking $x$ to be any eigenvector of $A$
corresponding to the eigenvalue $\lambda_1$ yields
$$
\log_b |x^\top A^n x| = \log_b (r_0^n |x|^2) = n \log_b r_0
+ 2 \log_b |x| \, ,
$$
which is periodic modulo one. Hence $(x^\top A^n x)$
is neither $b$-Benford nor terminating, a fact obviously contradicting
(i).

Assume from now on that $L_0 \ge 1$. In this case, by
re-labelling the eigenvalues $\lambda_1 , \ldots , \lambda_L$, it
can be assumed that $1, \frac1{2\pi} \arg \lambda_1 , \ldots ,
\frac1{2\pi} \arg \lambda_{L_0}$ are $\Q$-independent, and so
\begin{equation}\label{eq38}
\log_b r_0  = \frac{p_0}{q} + \frac{p_1}{q} \frac{\arg \lambda_1}{2\pi}
+ \ldots + \frac{p_{L_0}}{q} \frac{\arg \lambda_{L_0}}{2\pi} \, ,
\end{equation}
with the appropriate $p_0, p_1, \ldots, p_{L_0}\in \Z$ and $q\in
\N$. Let $w^{(1)} , \ldots ,
w^{(L_0)}\in \C^d$ be eigenvectors of $A$ corresponding to the
eigenvalues $\lambda_1, \ldots , \lambda_{L_0}$, respectively. 
Note that $\lambda_1 , \ldots, \lambda_{L_0}$
are all non-real, and consequently the $2L_0$ vectors $\Re w^{(1)} , \Im
w^{(1)} ,$ $ \ldots, $ $\Re w^{(L_0)}, \Im w^{(L_0)}$ are linearly
independent. Lemma \ref{lem24a} guarantees that, given any $u \in \R^{L_0}$, it is possible
to pick $x\in \R^d$ such that $x^\top \Re w^{(\ell )} = u_{\ell}$
and $x^\top  \Im w^{(\ell )} =0$ for all
$1 \le \ell \le L_0$. With $y:= \Re (w^{(1)}  + \ldots + w^{(L_0 )})$,
therefore, 
$$
x^\top A^n y  = r_0^n \bigl( 
u_1  \cos (n \arg \lambda_1) + \ldots + u_{L_0} \cos (n \arg \lambda_{L_0}) 
\bigr) \, , \quad \forall n \in \N \, .
$$
As $(x^\top A^n y)$ is not terminating whenever $u\ne 0 \in \R^{L_0}$,
Lemma \ref{lem36} shows that $x^\top A^n y \ne 0$ for all
sufficiently large $n$, and (\ref{eq38}) leads to 
\begin{align*}
q \log_b  |x^\top A^n y| = p_0 &  n  + p_1  n \frac{\arg \lambda_1}{2\pi}
+ \ldots + p_{L_0} n \frac{\arg \lambda_{L_0}}{2\pi}  + \\ & 
+
\frac{q}{\ln b} \ln \left| u_1 \cos \left( 2\pi n \frac{\arg
      \lambda_1}{2\pi} \right) + \ldots + u_{L_0}  \cos \left( 2\pi n \frac{\arg \lambda_{L_0}}{2\pi}\right)  \right| \, .
\end{align*}
Since $1, \frac1{2\pi} \arg \lambda_1, \ldots , \frac1{2\pi} \arg
\lambda_{L_0}$ are $\Q$-independent, by Lemma \ref{lem2Omega} one can specifically choose $u \in \R^{L_0}$ such that
$(q \log_b |x^\top A^n y| )$ is not u.d.\ mod 1, and hence
$( \log_b |x^\top A^n y|)$ is not u.d.\ mod 1 either, by
Lemma \ref{lem200}(iv). Thus $(x^\top A^n y)$ is neither Benford nor
terminating, a fact once again contradicting (i). Overall, therefore,
(i)$\Rightarrow$(ii), as claimed.

To prove the reverse implication (ii)$\Rightarrow$(i), let $\sigma(A)$ be
$b$-nonresonant. Given $x,y\in \R^d$, deduce from (\ref{prrelast})
that $(x^\top A^n y)$ is either terminating, or else
\begin{equation}\label{eq39}
x^\top A^n y  = |\lambda_1|^n n^k \Re \left( c_1 e^{\imath n
    \arg \lambda_1} + \ldots + c_L e^{\imath n \arg \lambda_L} + z_n
\right) \, , \quad \forall n \in \N  \, ,
\end{equation}
where $k\in \N_0$ and $L\in \N$; the numbers $\lambda_1 , \ldots ,
\lambda_L$ are appropriate (different) eigenvalues of $A$ with $|\lambda_1| =
\ldots = |\lambda_L| >0$ and $\Im \lambda_{\ell} \ge 0$ for all $1\le
\ell \le L$; the numbers $c_1, \ldots , c_L \in \C$ are all non-zero;
and $(n z_n)$ is a bounded sequence in $\C$. By the assumption of
$\sigma(A)$ being $b$-nonresonant,
$$
\log_b |\lambda_1| \not \in \mbox{\rm span}_{\Q} \Delta_{\sigma(A)
  \cap |\lambda_1| \mS} \supset \mbox{\rm span}_{\Q} \left(
\{1\} \cup \left\{ \frac{\arg \lambda_{\ell}}{2\pi} : 1 \le \ell \le L
\right\}
\right) \, .
$$ 
As before, let $L_0 + 1$ be the
dimension of $\mbox{\rm span}_{\Q} \left(\{1\} \cup
  \{\frac1{2\pi} \arg \lambda_{\ell} : 1\le \ell \le L \} \right)$, and
consider first the case of $L_0 = 0$, that is, $\frac1{2\pi} \arg
\lambda_{\ell}$ is rational for every $1\le \ell\le L$. As $\sigma
(A)$ would be $b$-resonant otherwise, this implies that $L=1$ and
$\lambda_1\in \R$. Since $\lambda_1$ is real, so is $c_1$, and for all
$n\in \N$, 
$$
|x^\top A^n y| = |\lambda_1|^n n^k \left|
\Re \left( 
c_1 e^{\imath n \arg \lambda_1} + z_n
\right)
\right| = |\lambda_1|^n n^k |c_1| \, \left| 1 + c_{1}^{-1} e^{-\imath
    n \arg \lambda_1} \Re z_n \right| \, .
$$
For all sufficiently large $n$, therefore,
$$
\log_b |x^\top A^n y | = n \log_b |\lambda_1| +
\frac{k}{\ln b} \ln n + \log_b|c_1| + \log_b \left| 1 + c_{1}^{-1} e^{-\imath
    n \arg \lambda_1} \Re z_n \right| 
$$
and since $\log_b |\lambda_1|$ is irrational, Lemmas \ref{lem200} and
\ref{lem240} imply that $(x^\top A^n y)$
is $b$-Benford.

It remains to consider the case of $L_0 \ge 1$. In this case, assume w.l.o.g.\ that $1,
\frac1{2\pi}
\arg \lambda_{1}, \ldots , \frac1{2\pi} \arg \lambda_{L_0}$ are
$\Q$-independent. Hence there exists $q\in \N$ and, for every $\ell \in
\{L_0+1 , \ldots , L\}$, an integer $p_{0{\ell}}$ as well as a vector
$p^{(\ell)}\in \Z^{L_0}$ such that
\begin{equation}\label{eq38a}
\frac{\arg \lambda_{\ell}}{2\pi} = \frac{p_{0{\ell}}}{q} + \frac{p_1^{(\ell)}}{q}
\frac{\arg \lambda_1}{2\pi} + \ldots + \frac{p_{L_0}^{(\ell)}}{q}
\frac{\arg \lambda_{L_0}}{2\pi} \, , \quad \forall  \ell \in \{ L_0 +
1 , \ldots , L \}  \, .
\end{equation}
Note that $p^{(\ell)}=0 \in\Z^{L_0}$ for at most one $\ell$, and
the $2L- L_0$ vectors 
$$
qe^{(1)}, \ldots , qe^{(L_0)}, \pm p^{(L_0 + 1)},
\ldots , \pm p^{(L)}\in \Z^{L_0}
$$ 
are all different because otherwise
$\sigma (A)$ would be $b$-resonant. As a consequence, for every $w\in \C^{L}$ the
multi-variate trigonometric polynomial $f_{w}:\T^{L_0} \to \R$ given by
$$
f_{w}(t)  = \Re \left(
\sum\nolimits_{\ell = 1}^{L_0} w_{\ell} e^{2\pi \imath q t_{\ell} } + \sum\nolimits_{\ell = L_0 +1}^L w_{\ell} e^{2\pi \imath
t^\top p^{(\ell)} } 
\right) 
$$
is non-constant, and so $f_{w}(t)\ne 0$ for
$\lambda_{\T^{L_0}}$-almost all $t\in \T^{L_0}$, provided that at least
one of the $L_0$ numbers $w_1, \ldots , w_{L_0}$ is non-zero. 

Fix now any $m \in \{1, \ldots , q\}$ and
deduce from (\ref{eq39}) and (\ref{eq38a}) that 
\begin{align*}
x^\top A^{nq+m}y  & = |\lambda_1|^{nq+m}  (nq+m)^k  \Re
\left( 
\sum\nolimits_{\ell = 1}^{L} c_{\ell} e^{\imath (nq + m ) \arg \lambda_{\ell}}
+ z_{nq+m}
\right) \\
& = |\lambda_1|^{nq} n^k |\lambda_1|^m \left( q + \frac{m}{n}
\right)^k  \Re
\left( 
\sum\nolimits_{\ell = 1}^{L_0} c_{\ell} e^{\imath m \arg \lambda_{\ell}}
e^{\imath nq \arg \lambda_{\ell}} \, +  \right. \\
& \qquad \qquad \qquad \quad 
\left. + \sum\nolimits_{\ell =L_0 + 1}^L c_{\ell} e^{\imath m \arg \lambda_{\ell}}
\prod\nolimits_{k = 1}^{L_0 }e^{\imath n p_{k}^{(\ell)} \arg
  \lambda_{k}}
+ z_{nq+m}
\right) \\
& =  |\lambda_1|^{nq} n^k |\lambda_1|^m \left( q + \frac{m}{n}
\right)^k \left( f_{w}\left( n \frac{\arg \lambda_1}{2\pi} ,
    \ldots ,  n \frac{\arg \lambda_{L_0}}{2\pi}\right)  + \Re
  z_{nq+m}\right) \, ,
\end{align*}
where $w \in \C^L$ is given by $w_{\ell} = c_{\ell} e^{\imath m \arg
  \lambda_{\ell}} \ne 0$ for all $\ell \in \{1, \ldots , L\}$.
Recall that by assumption the $L_0 + 2$ numbers $1, q \log_b |\lambda_1|, \frac1{2\pi}
\arg \lambda_1 , \ldots , \frac1{2\pi} \arg \lambda_{L_0}$ are
$\Q$-independent. Since $\lim_{n\to \infty} z_{nq+m} =0$ as well, Lemma
\ref{lem200} and \ref{lem220} applied to
\begin{align*}
\log_b |x^\top A^{nq+m} y| = nq \log_b|\lambda_1| & +
\frac{k}{\ln b} \ln n + m \log_b |\lambda_1| + k \log_b \left( q 
+  \frac{m}{n}\right) + \\
& + \frac1{\ln b} \ln \left| 
f_{w} \left( n \frac{\arg \lambda_1}{2\pi} ,
    \ldots ,  n \frac{\arg \lambda_{L_0}}{2\pi}\right)  + \Re z_{nq+m}
\right| 
\end{align*}
show that $(\log_b |x^\top A^{nq+m}  y  | )$ is u.d.\ mod
$1$. As $m\in \{1,\ldots , q\}$ was arbitrary, $(\log_b |x^\top A^{n} y|)$ is u.d.\ mod
$1$, by Lemma \ref{lem230}, i.e., $(x^\top A^n y)$ is
$b$-Benford. In summary, therefore, (ii)$\Rightarrow$(i), and the proof is complete.\hfill $\Box$

\begin{rem}\label{rem34a}
For {\em invertible\/} $A$ the important formula (\ref{prnew1}) holds
for all $n\in \N$. In this case, ``terminating'' 
in Theorem \ref{thm34}(i) can be replaced by ``identically zero''; see
also Corollary \ref{cor370} below.
\end{rem}

\begin{example}\label{ex39}
(i) The spectrum of $A=\left[ \begin{array}{cc} 1 & 1 \\ 1 & 0 \end{array} \right]$ is $\sigma (A) = \{\varphi , - \varphi^{-1}\}$ with
$\varphi = \frac12 (1+\sqrt{5})$. Since $A$ is invertible and $\log_b
\varphi$ is irrational (in fact, transcendental) for every $b\in \N
\setminus \{1\}$, the sequence $(x^\top A^n y)$ is, for every $x,y\in \R^2$, either Benford or identically zero. The latter
alternative occurs if and only if $x$ and $y$ are multiples of the (orthogonal) eigenvectors corresponding, respectively, to the eigenvalues 
$\varphi$ and $-\varphi^{-1}$, or vice versa.

(ii) Consider the (integer) $3\times 3$-matrix
$$
B = \left[
\begin{array}{rcc}
-3 & 1 & 0 \\
1 & 0 & 1 \\
0 & 1 & 6
\end{array}
\right] \, ,
$$
the characteristic polynomial of which is
$$
\chi_B (\lambda) = \det (B- \lambda I_3) = -\lambda^3 + 3\lambda^2 + 20 \lambda - 3 \, .
$$
Since $B$ is symmetric, all three eigenvalues of $B$ are real, and from $\chi_B(0)<0< \chi_B(1)$ it is clear
that they are all different. They also have different absolute values. To show that $\sigma (B)$ is $b$-nonresonant
for every $b\in \N \setminus \{1\}$, assume that $|\lambda|=b^{p/q}$
for some $\lambda \in \sigma(B)$ and relatively prime $p\in \Z\setminus \{0\}$, $q\in \N$.
If $p>0$ then $b^p$ is an eigenvalue of $B^q$ or $-B^q$ and hence divides $|\det B^q| =3^q$. This is only possible if $b=3^N$ for some
$N\in \N$. Similarly, if $p<0$ then $3^q b^{|p|}$ is an eigenvalue of
one of the two integer matrices $\pm (3B^{-1})^q$ and hence divides
$|\det (3B^{-1})^q|=3^{2q}$. Again, this leaves only the possibility of $b=3^N$ for some $N\in \N$. To analyse the
latter, assume now that $|\lambda|=3^{p/q}$ with relatively prime
$p\in \Z\setminus \{0\}$, $q\in \N$, possibly different from
before. Consider first the case of $p>0$. In this case, $\lambda$ is a root of one of the two irreducible polynomials
$\lambda^q \pm 3^p$ which in turn is a factor of $\chi_B$. Thus $q\le 3$, and since $3^{p}$ is an
eigenvalue of one of the two matrices $\pm B^q$, it follows that $p\le
q$. It can now be checked easily,
e.g.\ by computing $\chi_{B^2}$ and $\chi_{B^3}$, or by means of row reductions, that none
of the four numbers $\pm 3 ,\pm3^{2}$ is an eigenvalue of any of the
three matrices $B, B^2,B^3$. The possibility of $|\lambda|=3^{p/q}$
with $p<0$ is ruled out in a completely similar manner.
In summary, $\log_b |\lambda|$ is irrational for every $\lambda \in \sigma (B)$ and every $b\in \N \setminus \{1\}$, and $\sigma (B)$
is $b$-nonresonant. 
(Note that in order to draw this conclusion, it is not necessary to
explicitly know any eigenvalue of $B$.)
By Theorem \ref{thm34} and Remark \ref{rem34a}, the sequence $(x^\top B^n y)$ is, for every $x,y\in \R^3$, either
Benford or identically zero. As in (i), the latter case occurs
precisely if $x$ and $y$ (or vice versa) are, respectively, proportional and
orthogonal to the {\em same\/} eigenvector of $B$.

(iii) For the (invertible) matrix $C = \frac12 \left[
\begin{array}{cc}
1 + \pi  & 1 - \pi \\ 1- \pi & 1 + \pi
\end{array}
\right]$ the spectrum $\sigma(C)= \{1,\pi \}$ is $b$-resonant for every
$b\in \N \setminus \{1\}$. By Theorem \ref{thm34}, there exist $x,y\in
\R^2$ for which $(x^\top C^n y)$ is neither $b$-Benford nor identically
zero. Indeed, with $x=y= e^{(1)} + e^{(2)}$, for instance, $x^\top C^n y\equiv 2$. Similarly, $|C^n
x|\equiv \sqrt{2}$, so $(|C^nx|)$ as well is neither $b$-Benford nor
trivial. On the other hand, $(|C^n|)=(\pi ^n)$ is Benford. Theorems
\ref{thm350} and \ref{thm360} below relate these two simple observations to
the facts that $\sigma(C^n)=\{1,\pi ^n\}$ is $b$-resonant for every $n\in
\N$, whereas $\sigma(C) \cap r_{\sigma}(C)\mS = \{ \pi \}$ is not.
\end{example}

In addition to sequences of the form $(x^\top A^n y)$ in Theorem
\ref{thm34}(i), which may be thought of as {\em linear\/} observables of the
process $(A^n)$, some {\em non-linear\/} observables may also be of
interest. The next theorem establishes the Benford property
specifically for $(|A^n x|)$ with $x\in \R^d$. For the formulation of
the result, note that if $\cZ \subset \C$ is $b$-nonresonant then so is
$\cZ^n := \{z^n : z \in \cZ\}$ for every $n\in \N$. The converse does
not hold in general (unless $\# \cZ \le 1$), as the example of the $b$-resonant
set $\cZ= \{-\pi, \pi\}$ shows, for which $\cZ^2 = \{\pi^2\}$ is
$b$-nonresonant. Furthermore, this example illustrates the easily established
fact that $\cZ \subset r \mS$ satisfies (ii) of Definition
\ref{def31} if and only if $\cZ^N$ is $b$-nonresonant for some $N\in \N$.
Also, recall that $\sigma (A^n) =\sigma(A)^n$ for every
$A\in \R^{d\times d}$ and $n\in \N$.

\begin{theorem}\label{thm350}
Let $A\in \R^{d\times d}$ and $b\in \N \setminus \{1\}$. If $\sigma
(A^N)$ is $b$-nonresonant for some $N\in \N$ then, for every $x\in
\R^d$, the sequence $(|A^n x|)$ is either $b$-Benford or terminating.
\end{theorem}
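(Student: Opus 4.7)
I plan to adapt the proof of Theorem \ref{thm34}(ii)$\Rightarrow$(i), replacing the bilinear form $x^\top A^n y$ throughout by the Euclidean norm $|A^n x|$, and to handle the hypothesis ``for some $N$'' by a subsequence reduction. First, I reduce to the case $N = 1$: since $A^{n_0}x = 0$ implies $A^n x = 0$ for every $n \ge n_0$, the sequence $(A^n x)$ is either terminating, or $A^m x \neq 0$ for every $m \in \N_0$. In the latter case, if the $N=1$ version of the theorem is granted for the matrix $A^N$ and initial vector $A^m x$ (for $m = 0, 1, \ldots, N-1$), each of the $N$ subsequences $(|A^{nN+m}x|)_n = (|(A^N)^n A^m x|)_n$ is $b$-Benford, whence Lemma \ref{lem230} yields that $(\log_b|A^n x|)$ is u.d.\ mod $1$. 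It therefore suffices to treat the case in which $\sigma(A)$ itself is $b$-nonresonant.

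\textbf{Jordan decomposition and $q$-subsequence.} Assume $\sigma(A)$ is $b$-nonresonant and $(|A^n x|)$ is not terminating. Using (\ref{prnew1}), write $A^n x = \Re \sum_{\lambda \in \sigma^+(A)} P_{\lambda}(n)x\, \lambda^n$ for $n \ge d$, and define $r,k,\sigma^{++}$ in the obvious $\C^d$-valued analogue of the scalar definitions in the proof of Theorem \ref{thm34}(ii)$\Rightarrow$(i), so that
\[
A^n x = r^n n^k \Bigl(\Re \!\!\sum_{\lambda \in \sigma^{++}} c_\lambda e^{i n \arg \lambda} + z_n\Bigr), \qquad n \ge d,
\]
with non-zero $c_\lambda \in \C^d$ and $(nz_n)$ bounded in $\R^d$. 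Squaring,
\[
|A^n x|^2 = r^{2n} n^{2k}\Bigl(\bigl| \Re {\textstyle \sum_{\lambda \in \sigma^{++}}}c_\lambda e^{i n \arg \lambda}\bigr|^2 + O(1/n)\Bigr).
\]
Exactly as in (\ref{eq38a}), let $L_0 + 1$ be the $\Q$-dimension of $\mbox{\rm span}_\Q\!\bigl(\{1\} \cup \{\arg\lambda/(2\pi) : \lambda \in \sigma^{++}\}\bigr)$, pick a basis $1, \theta_1/(2\pi),\ldots,\theta_{L_0}/(2\pi)$ and the common denominator $q \in \N$. For each $m \in \{1,\ldots, q\}$ the subsequence $n \mapsto nq + m$ collapses to the form
\[
|A^{nq + m}x| = r^{nq + m}(nq+m)^k \bigl| g_m \bigl(\langle (n\theta_1/(2\pi),\ldots, n\theta_{L_0}/(2\pi))\rangle\bigr) + w_{n,m}\bigr|,
\]
with $w_{n,m} \to 0$ in $\R^d$ and $g_m : \T^{L_0} \to \R^d$ an explicit real trigonometric-polynomial map whose integer Fourier frequencies are $\pm q e^{(1)}, \ldots, \pm q e^{(L_0)}$ from the basis eigenvalues together with $\pm p^{(\lambda)} \in \Z^{L_0}$ from the remaining ones (exactly as in (\ref{eq38a})).

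\textbf{Completion via Lemma \ref{lem220}.} The critical point, obtained by repeating verbatim the last paragraph of the proof of Theorem \ref{thm34}(ii)$\Rightarrow$(i), is that $b$-nonresonance of $\sigma(A)$ forces all these frequency vectors to be pairwise distinct, so $g_m$ is a non-trivial trigonometric polynomial and $|g_m| > 0$ $\lambda_{\T^{L_0}}$-a.e. Taking $\log_b$ and invoking Lemma \ref{lem200}(iii) to absorb convergent offsets, the problem reduces to showing that
\[
\Bigl(nq\log_b r + \frac{k}{\ln b}\ln n + \frac{1}{\ln b}\ln \bigl|g_m(\langle \cdots \rangle) + w_{n,m}\bigr|\Bigr)
\]
is u.d.\ mod $1$. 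By $b$-nonresonance the $L_0 + 2$ numbers $1, q\log_b r, \theta_1/(2\pi), \ldots, \theta_{L_0}/(2\pi)$ are $\Q$-independent, so Lemma \ref{lem220} applies once one notes that its proof uses only continuity of $\ln |f|$ off a $\lambda_{\T^{L_0}}$-null set together with the smallness of $\lambda_{\T^{L_0}}(\{|f| \le \delta\})$—properties that remain valid when $f$ is replaced by the $\R^d$-valued $g_m$ and $|\cdot|$ by Euclidean norm. A final application of Lemma \ref{lem230} to the $q$ subsequences completes the proof.

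\textbf{Main obstacle.} The genuinely non-trivial step is the verification that $g_m$ is not identically zero on $\T^{L_0}$. This re-plays the delicate case distinctions (one dominant real eigenvalue, two or more dominant non-real eigenvalues, etc.) that occupy the end of the proof of Theorem \ref{thm34}(ii)$\Rightarrow$(i), with $b$-nonresonance of $\sigma(A)$ precisely excluding each type of coincidence that would collapse two frequency vectors. Everything else is bookkeeping of error terms and the essentially notational extension of Lemma \ref{lem220} from complex modulus to Euclidean norm.
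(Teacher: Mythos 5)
Your proposal is correct and follows essentially the same strategy as the paper's proof: fix $m\in\{1,\ldots,N\}$ and use (\ref{prnew1}) with $A$ replaced by $A^N$ (which is what your reduction to $N=1$ amounts to), pass to the $q$-subsequences from (\ref{eq38a}), and conclude via Lemmas \ref{lem200}, \ref{lem220}, and \ref{lem230}. Two minor remarks: the paper avoids extending Lemma \ref{lem220} to $\R^d$-valued $f$ by applying it directly to the scalar composition $t\mapsto|f_{w^{(1)},\ldots,w^{(L)}}(t)|$ with the scalar offsets $z_n:=\bigl|f_{w^{(1)},\ldots,w^{(L)}}(\cdots)+u_{nq+l}\bigr|-\bigl|f_{w^{(1)},\ldots,w^{(L)}}(\cdots)\bigr|$, which satisfy $|z_n|\le|u_{nq+l}|\to 0$; and in the degenerate case $L_0=0$ (where $\sigma^{++}$ reduces to a single real eigenvalue and no trigonometric polynomial appears) one invokes Lemma \ref{lem240} instead of Lemma \ref{lem220}, a case your outline should state explicitly.
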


\begin{proof}
Assume that $\sigma (A^N)$ is $b$-nonresonant and, as in the proof of
Lemma \ref{lem36}, consider the set
$\sigma^+(A) = \{\lambda \in \sigma (A): \Im \lambda \ge 0\}\setminus \{0\}$. If $\sigma^+ (A)=\varnothing$
then $A$ is nilpotent, $A^n x = 0$ for all $n\ge d$, and $(|A^n x|)$ is terminating. From now on,
therefore, assume that $\sigma^+ (A)\ne \varnothing$, and hence also $\sigma^+ (A^N)\ne \varnothing$.
Fix any $m\in \{1,\ldots , N\}$. Given $x\in \R^d$, deduce from
(\ref{prnew1}) with $A$ replaced by $A^N$ that
\begin{equation}\label{eq3t2n1}
A^{nN+m}x = 
\Re \left( 
\sum\nolimits_{\lambda\in \sigma^+(A^N)} P_{\lambda}(n)A^m x \lambda^n 
\right)
=:\Re \left(
\sum\nolimits_{\lambda \in \sigma^+(A^N)} q_{\lambda}(n) \lambda^n 
\right) \, , 
\end{equation}
for all $n\ge d$, where each $q_{\lambda}$ now is a (possibly non-real) vector-valued polynomial of degree at most
$d-1$, i.e., $q_{\lambda}(n)\in \C^{d}$, and every component of $q_{\lambda}$ is a polynomial in $n$ of
degree no larger than $d-1$. By the identical reasoning as in the
proof of the (iii)$\Rightarrow$(i) part in Lemma \ref{lem36}, deduce from (\ref{eq3t2n1}) that either
$A^{nN+m}x=0$ for all $n\ge d$, in which case $(|A^n x|)$ is terminating, or else, with the appropriate non-empty set $\sigma^{++}\subset \sigma^+(A^N)$ and
$c_{\lambda}\in \C^d \setminus \{0\}$ for every $\lambda \in \sigma^{++}$,
\begin{equation}\label{eq3t2n2}
A^{nN+m}x = r^n n^k \left(
\Re \left( \sum\nolimits_{\lambda\in \sigma^{++}} c_{\lambda} e^{\imath n \arg \lambda}\right) + u_n
\right) \, ,
\end{equation}
where $r>0$, $k\in \N_0$, and $(u_n)$ is a sequence in $\R^d$ for which $(n|u_n|)$ is bounded. (Note that $\sigma^{++}$,
and hence $r$, $k$, $c_{\lambda}$ and $(u_n)$ as well, may depend on $x$ and $m$.) Since $\sigma (A^N)$ is $b$-nonresonant,
$$
\log_b r \not \in \mbox{\rm span}_{\Q} \Delta_{\sigma (A^N) \cap r\mS} \supset
\mbox{\rm span}_{\Q} \left(
\{1\} \cup \left\{ \frac{\arg \lambda}{2\pi} : \lambda \in \sigma^{++}\right\} 
\right) \, .
$$
The argument now proceeds as in the proof of Theorem \ref{thm34}: Let $L_0+1$ be the dimension
of $\mbox{\rm span}_{\Q} \left(
\{1\} \cup \left\{ \frac1{2\pi}\arg \lambda  : \lambda \in \sigma^{++}\right\} 
\right)$. If $L_0=0$ then $\sigma^{++}=\{\lambda_1\}$ for some $\lambda_1 \in \R \setminus \{0\}$. (Otherwise
$\sigma(A^N)$ would be $b$-resonant.) In this case, $c_{\lambda_1}$ is
real as well, i.e.\ $c_{\lambda_1}\in \R^d$, and (\ref{eq3t2n2}) implies
$$
\log_b |A^{nN+m}x| = n \log_b r + \frac{k}{\ln b} \ln n + \log_b|c_{\lambda_1}+ e^{-\imath n \arg \lambda_1}u_n| \, ,
\quad \forall n \ge d \, .
$$
Since $\log_b r$ is irrational, $(\log_b |A^{nN+m}x|)$ is u.d.\ mod
$1$ by Lemma \ref{lem240}. The same argument can be
applied for every $m\in \{1,\ldots , N\}$, and so $(\log_b |A^n x|)$ is u.d.\ mod $1$ as well, by Lemma
\ref{lem230}. In other words, $(|A^n x|)$ is $b$-Benford.

Consider in turn the case of $L_0 \ge 1$. Label the elements of $\sigma^{++}$ as $\lambda_1, \ldots , \lambda_L$ and assume
w.l.o.g.\ that $1, \frac1{2\pi} \arg \lambda_1, \ldots , \frac1{2\pi}\arg \lambda_{L_0}$ are $\Q$-independent.
With the same notation as in (\ref{eq38a}), and given any vectors
$w^{(1)}, \ldots , w^{(L)} \in \C^d$, the vector-valued trigonometric polynomial $f:\T^{L_0}\to \R^d$ given by
$$
f_{w^{(1)}, \ldots , w^{(L)}}(t) = \Re\left(
\sum\nolimits_{\ell = 1}^{L_0} w^{(\ell )} e^{2\pi \imath q t_{\ell}} +
\sum\nolimits_{\ell = L_0+1}^L w^{( \ell )} e^{2\pi \imath t^\top p^{(\ell)}}
\right)
$$
is non-constant, provided that $w^{(\ell )}\ne 0$ for at least one
$\ell \in \{1, \ldots , L_0\}$. In this case, $f_{w^{(1)}, \ldots ,
  w^{(L)}} (t)\ne 0$, and hence also $|f_{w^{(1)}, \ldots ,
  w^{(L)}} (t)|\ne 0$ for $\lambda_{\T^{L_0}}$-almost all
$t\in \T^{L_0}$. Note that $|f_{w^{(1)}, \ldots ,
  w^{(L)}}|:\T^{L_0}\to \R$ is continuous. Fix now any $l \in \{1,
\ldots , q\}$, and deduce from (\ref{eq3t2n2}) that
\begin{align*}
& A^{(nq +l)N +m} x  = r^{nq} n^k r^l \left( q + \frac{l}{n}\right)^k
\biggl(
\Re \biggl(
\sum\nolimits_{\ell = 1}^{L_0} c_{\lambda_{\ell}} e^{\imath l \arg
  \lambda_{\ell}} e^{\imath n q \arg \lambda_{\ell}} +\\ 
& \qquad \qquad \qquad \quad 
 + \sum\nolimits_{\ell = L_0 + 1}^L c_{\lambda_{\ell}} e^{\imath l \arg
  \lambda_{\ell}} \prod\nolimits_{\nu = 1}^{L_0} e^{\imath n p^{(\ell)}_{\nu}
\arg \lambda_{\nu}}
\biggr)
+u_{nq+l}
\biggr)\\
& \qquad =  r^{nq} n^k r^l \left( q + \frac{l}{n}\right)^k \left(
f_{w^{(1)}, \ldots , w^{(L)}} \! \left( n \frac{\arg \lambda_1}{2\pi} ,
  \ldots , n \frac{\arg \lambda_{L_0}}{2\pi} \right) + u_{nq + l}
\right) \, ,
\end{align*}
with $w^{(\ell)} = c_{\lambda_{\ell}} e^{\imath l \arg
  \lambda_{\ell}}\in \C^d \setminus \{0\}$ for every $\ell \in \{ 1,
\ldots , L \}$. It follows that
$$
|A^{(nq +l)N +m}x| =  r^{nq} n^k r^l \left( q + \frac{l}{n}\right)^k
\Bigg|
\bigg| 
f_{w^{(1)}, \ldots , w^{(L)}} \! \left( n \frac{\arg \lambda_1}{2\pi} ,
  \ldots , n \frac{\arg \lambda_{L_0}}{2\pi} \right) 
\bigg| + z_n
\Bigg|\, ,
$$
where the (real) sequence $(z_n)$ is given by
\begin{align*}
z_n = & \left|  f_{w^{(1)}, \ldots , w^{(L)}} \! \left( n \frac{\arg \lambda_1}{2\pi} ,
  \ldots , n \frac{\arg \lambda_{L_0}}{2\pi} \right) + u_{nq + l}
\right| \\
& -
\left|  f_{w^{(1)}, \ldots , w^{(L)}} \! \left( n \frac{\arg \lambda_1}{2\pi} ,
  \ldots , n \frac{\arg \lambda_{L_0}}{2\pi} \right) 
\right| \, .
\end{align*}
Clearly, $|z_n|\le |u_{nq+l}|$, and so $\lim_{n\to
  \infty}z_{n}=0$. Lemmas \ref{lem200} and \ref{lem220}
now show that $(\log_b |A^{(nq + l)N + m}x|)$ is u.d.\ mod $1$. Since the number $l\in \{1, \ldots
q\}$ was arbitrary, $(\log_b |A^{nN+m}x|)$ is u.d.\ mod $1$ as well,
by Lemma \ref{lem230}. Moreover, the same argument can be applied for
every $m\in \{1, \ldots , N\}$, hence $(\log_b |A^n x|)$, too, is
u.d.\ mod $1$, i.e., $(|A^n x|)$ is $b$-Benford.
\end{proof}

In analogy to Theorem \ref{thm350}, the next result adresses the $b$-Benford property of the sequence
$(|A^n|)$. For a concise statement, the following terminology is
useful. Given any eigenvalue $\lambda$ of $A\in \R^{d\times d}$, let
$k(\lambda)\in \{0, \ldots , d-1\}$ be the largest integer for which
$$
\mbox{\rm rank} (A - \lambda I_d)^{k+1} < \mbox{\rm rank} (A - \lambda
I_d)^{k} \quad  \mbox{\rm if } \lambda \in \R \, ,
$$ 
and
$$
\mbox{\rm rank} (A^2 - 2\Re
\lambda A + |\lambda|^2 I_d)^{k+1} < \mbox{\rm rank} (A^2  - 2\Re \lambda
A + |\lambda|^2 I_d)^{k}
\quad  \mbox{\rm if } \lambda \in \C \setminus
\R\, . 
$$
Equivalently, $k(\lambda)+1$ is the size of the largest block
associated with the eigenvalue $\lambda$ in the Jordan Normal Form
(over $\C$) of $A$. With this, define the
{\em extremal peripheral spectrum\/} of $A$, henceforth denoted
$\sigma_{EP}(A)$, to be the set
\begin{equation}\label{eq359}
\sigma_{EP}(A) = \bigl\{
\lambda \in \sigma(A) \cap r_{\sigma}(A) \mS : k(\lambda) = k_{\max}
\bigr\} \, ,
\end{equation}
where $k_{\max} = k_{\max}(A) = \max \{ k(\lambda) : \lambda \in \sigma(A) \cap r_{\sigma}(A) \mS \}$.
Clearly $\sigma_{EP}(A) \subset \sigma(A)$, and just as
$\sigma(A)$, the set $\sigma_{EP}(A)$ is non-empty and symmetric
w.r.t.\ the real axis. Also, $\sigma_{EP}(A^n) = \sigma_{EP}(A)^n$ for
every $n\in \N$.

\begin{theorem}\label{thm360}
Let $A\in \R^{d\times d}$ and $b\in \N \setminus \{1\}$. If
$\sigma_{EP}(A^N)$ is $b$-nonresonant for some $N\in \N$ then either
$(|A^n|)$ is $b$-Benford or $A$ is nilpotent.
\end{theorem}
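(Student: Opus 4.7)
The plan is to mirror the proof of Theorem~\ref{thm350}, with the matrix norm $|A^n|$ in place of the vector norm $|A^nx|$ and the extremal peripheral spectrum $\sigma_{EP}(A^N)$ in place of $\sigma(A^N)$. If $A$ is nilpotent, there is nothing to show, so assume $r:=r_\sigma(A)>0$, and write $\sigma_{EP}^+:=\sigma_{EP}(A^N)\cap\sigma^+(A^N)$, which is non-empty. Fix $m\in\{1,\ldots,N\}$ and apply~\eqref{prnew1} to $A^N$ at exponent $\nu$, pre-multiplied by $A^m$, to obtain, for every $\nu\ge d$,
\begin{equation*}
A^{\nu N+m}=r^{\nu N}\,\nu^{k_{\max}}\bigl(M_\nu^{(m)}+U_\nu^{(m)}\bigr),
\end{equation*}
where $k_{\max}=k_{\max}(A)$, $M_\nu^{(m)}=\Re\bigl(\sum_{\mu\in\sigma_{EP}^+}\widetilde C_\mu^{(m)}\,e^{\imath\nu\arg\mu}\bigr)$, each $\widetilde C_\mu^{(m)}\in\C^{d\times d}$ is $A^m$ applied to the leading $\nu^{k_{\max}}$-coefficient of the Jordan polynomial of $A^N$ at $\mu$, and $(\nu|U_\nu^{(m)}|)$ is bounded. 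Crucially, $\widetilde C_\mu^{(m)}\ne 0$ for every $\mu\in\sigma_{EP}^+$: the Jordan coefficient is non-zero by the very definition of $\sigma_{EP}$, and $A^m$ is invertible on that generalized eigenspace since $\mu\ne 0$.

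Let $L_0+1$ denote the $\Q$-dimension of $\mbox{\rm span}_\Q\bigl(\{1\}\cup\{\tfrac{\arg\mu}{2\pi}:\mu\in\sigma_{EP}^+\}\bigr)$. If $L_0=0$, $b$-nonresonance of $\sigma_{EP}(A^N)$ forces $\sigma_{EP}(A^N)=\{\mu_1\}$ with $\mu_1\in\{r^N,-r^N\}$ and $\log_b r\notin\Q$; here $\widetilde C^{(m)}$ is real, so $|M_\nu^{(m)}|\equiv|\widetilde C^{(m)}|>0$, and $|A^{\nu N+m}|=r^{\nu N}\nu^{k_{\max}}(|\widetilde C^{(m)}|+\epsilon_\nu)$ with $\epsilon_\nu\to 0$, whence Lemmas~\ref{lem200} and~\ref{lem240} give $(\log_b|A^{\nu N+m}|)_\nu$ u.d.\ mod~$1$. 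For $L_0\ge 1$, label $\sigma_{EP}^+=\{\mu_1,\ldots,\mu_L\}$ so that $1,\tfrac{\arg\mu_1}{2\pi},\ldots,\tfrac{\arg\mu_{L_0}}{2\pi}$ are $\Q$-independent, represent the remaining arguments via integers $p_{0\ell}$, $p^{(\ell)}\in\Z^{L_0}$, and $q\in\N$ as in~\eqref{eq38a}, and split $\nu=\xi q+l$ with $l\in\{1,\ldots,q\}$. A direct computation exhibits $M_{\xi q+l}^{(m)}$ as the evaluation at $\bigl(\xi\tfrac{\arg\mu_1}{2\pi},\ldots,\xi\tfrac{\arg\mu_{L_0}}{2\pi}\bigr)$ of a continuous matrix-valued trigonometric polynomial $G_{l,m}:\T^{L_0}\to\R^{d\times d}$ whose $2L-L_0$ frequencies $qe^{(1)},\ldots,qe^{(L_0)},\pm p^{(L_0+1)},\ldots,\pm p^{(L)}\in\Z^{L_0}$ are pairwise distinct, thanks to the $b$-nonresonance of $\sigma_{EP}(A^N)$.

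The crux is that $|G_{l,m}|\ne 0$ on a set of full $\lambda_{\T^{L_0}}$-measure. If $G_{l,m}\equiv 0$ as a matrix polynomial, matching Fourier coefficients at the distinct frequencies would force every $\widetilde C_{\mu_\ell}^{(m)}e^{\imath l\arg\mu_\ell}$ to vanish, contradicting the non-vanishing established above; hence some entry of $G_{l,m}$ is a non-trivial real trigonometric polynomial on $\T^{L_0}$ with zero set of measure zero, and the (smaller) matrix zero set $\{t:G_{l,m}(t)=0\}$ has measure zero as well. Writing
\begin{equation*}
|A^{(\xi q+l)N+m}|=r^{(\xi q+l)N}(\xi q+l)^{k_{\max}}\bigl(|G_{l,m}(\cdots)|+z_\xi\bigr)
\end{equation*}
with $|z_\xi|\le|U_{\xi q+l}^{(m)}|\to 0$ by the reverse triangle inequality for the spectral norm, taking $\log_b$ puts the time-varying part in precisely the form demanded by Lemma~\ref{lem220}. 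Since $N\log_b r\notin\mbox{\rm span}_\Q\Delta_{\sigma_{EP}(A^N)}$ by $b$-nonresonance, the $L_0+2$ numbers $1,qN\log_b r,\tfrac{\arg\mu_1}{2\pi},\ldots,\tfrac{\arg\mu_{L_0}}{2\pi}$ are $\Q$-independent, and Lemma~\ref{lem220} applied with $f=|G_{l,m}|$ and $\beta=1/\ln b$ shows $(\log_b|A^{(\xi q+l)N+m}|)_\xi$ is u.d.\ mod~$1$. Two applications of Lemma~\ref{lem230}, first over $l\in\{1,\ldots,q\}$ and then over $m\in\{1,\ldots,N\}$, yield $(\log_b|A^n|)$ u.d.\ mod~$1$, i.e., $(|A^n|)$ is $b$-Benford. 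The main obstacle, absent from Theorem~\ref{thm350}, is the matrix-level non-vanishing of $G_{l,m}$: one must rule out identical vanishing as a matrix polynomial before its spectral norm inherits the a.e.\ non-vanishing needed by Lemma~\ref{lem220}.
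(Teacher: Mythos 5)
Your proof is correct and follows the same overall architecture as the paper's: fix the residue classes mod $N$ and mod $q$, factor out the dominant growth $r_\sigma(A)^{\nu N}\nu^{k_{\max}}$, identify the remaining bounded part (up to a decaying perturbation) with a matrix-valued trigonometric polynomial whose frequencies are controlled by $b$-nonresonance, establish that its spectral norm is non-zero $\lambda_{\T^{L_0}}$-a.e., and close via Lemma~\ref{lem220} and Lemma~\ref{lem230}. Where you genuinely depart from the paper is in how the dominant-order coefficients are shown to be non-zero. The paper sandwiches $|A^{nN+m}|$ between an a priori upper bound read off the representation~(\ref{eq3t3n1}) and a lower bound provided by the existence of $x,y$ for which $x^\top A^n y$ attains the maximal growth rate; this pins down $r=r_\sigma(A)^N$ and $k=k_{\max}$ and yields only that $C_\lambda\ne 0$ for \emph{some} $\lambda\in\sigma_{EP}^+(A^N)$. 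You instead read the non-vanishing directly off the Jordan structure: the $\nu^{k_{\max}}$-coefficient of $P_\mu$ is non-zero for every $\mu\in\sigma_{EP}(A^N)$ by definition of the extremal peripheral spectrum, and left (or right) multiplication by $A^m$ does not annihilate it because $A^m$ acts invertibly on the $\mu$-generalized eigenspace of $A^N$ (all eigenvalues of $A$ there are $N$-th roots of $\mu\ne 0$). This buys the stronger statement that \emph{all} $\widetilde C_\mu^{(m)}\ne 0$, which in turn makes the coefficients at the frequencies $qe^{(1)},\ldots,qe^{(L_0)}$ automatically non-zero and hence $G_{l,m}$ non-constant — a point the paper delegates to the phrase ``completely analogous arguments as in the proof of Theorem~\ref{thm350}''. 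Your approach is arguably a touch cleaner as it avoids the auxiliary $x,y$ and the norm sandwich.

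One small imprecision, shared with the paper's own phrasing: you assert the $2L-L_0$ frequencies are ``pairwise distinct'', but as the paper notes in the analogous passage, $p^{(\ell)}$ may vanish for (at most) one $\ell>L_0$, in which case $+p^{(\ell)}$ and $-p^{(\ell)}$ coincide at $0$. This does not damage the argument, since $qe^{(1)},\ldots,qe^{(L_0)}$ are non-zero, mutually distinct, and distinct from $\pm p^{(\ell)}$ for all $\ell$, so the non-zero coefficient at frequency $qe^{(1)}$ already forces $G_{l,m}$ non-constant — but the claim as stated is slightly too strong.
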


\begin{proof}
Clearly, $(|A^n|)$ is terminating if and only if $A$ is nilpotent.
Assume henceforth that $A$ is not nilpotent, thus $r_{\sigma}(A)>0$,
and let $\sigma_{EP}(A^N)$ be $b$-nonresonant. Fix any $m\in \{1,\ldots ,N\}$ and recall from
(\ref{prnew1}) that, in analogy to (\ref{eq3t2n1}) and (\ref{eq3t2n2}) above,
\begin{align}\label{eq3t3n1}
A^{nN+m}  & = \Re \left(
\sum\nolimits_{\lambda \in \sigma^+(A^N)} P_{\lambda}(n) A^m \lambda^n
\right)\nonumber \\
&  = r^n n^k 
 \left( \Re \left( 
\sum\nolimits_{\lambda \in \sigma^+(A^N) \cap r \mS} C_{\lambda} e^{i n \arg \lambda}
\right) + D_n \right)
\, , \quad \forall n\ge d \, ,
\end{align}
where $0<r \le r_{\sigma}(A^N) = r_{\sigma}(A)^N$ and $k\in \{ 0, \ldots
, d-1\}$ with $k\le k_{\max}(A) = k_{\max}(A^N)= :k_{\max}$, $C_{\lambda}\in
\C^{d\times d}$, and $(D_n)$ is a sequence in $\R^{d\times d}$ for
which $(n|D_n|)$ is bounded. (As in (\ref{eq3t2n2}) the quantities
$r$, $k$, $C_{\lambda}$ and $(D_n)$ may all depend on $m$.) From
(\ref{eq3t3n1}), it follows that
\begin{equation}\label{eq3t3n10}
|A^{nN+m}| \le r^n n^k a \, , \quad \forall n \in \N \, ,
\end{equation}
with the appropriate $a>0$. On the other hand, there exist $x,y\in \R^d$
for which
\begin{equation}\label{eq3t3n11}
\limsup\nolimits_{n\to \infty} \frac{|x^\top A^{nN + m}
  y|}{r_{\sigma}(A)^{nN+m} (nN + m)^{k_{\max}}} \ge 1 \, .
\end{equation}
Combining (\ref{eq3t3n10}) and (\ref{eq3t3n11}) yields
\begin{align*}
1 & \le \limsup\nolimits_{n\to \infty} \frac{|x^\top A^{nN + m}
  y|}{r_{\sigma}(A)^{nN+m} (nN + m)^{k_{\max}}}  \\
& \le
\frac{|x||y|a}{r_{\sigma}(A)^m N^{k_{\max}}} \limsup\nolimits_{n\to
\infty} \left( \frac{r}{r_{\sigma}(A)^N}\right)^n n ^{k - k_{\max}}
\, ,
\end{align*}
which in turn shows that $r = r_{\sigma}(A)^N$ and $k = k_{\max}$. With
$\sigma_{EP}^+(A^N) := \{\lambda \in \sigma_{EP}(A^N): \Im \lambda \ge
0\}$, therefore, (\ref{eq3t3n1}) can be re-written as
\begin{equation}\label{eq3t3n12}
A^{nN+m} = r_{\sigma}(A)^{nN} n^{k_{\max}} 
 \left( \Re \left( 
\sum\nolimits_{\lambda \in \sigma_{EP}^+(A^N)} C_{\lambda} e^{i n \arg \lambda}
\right) + E_n \right)
\, , \quad \forall n\in \N  \, ,
\end{equation}
where $C_{\lambda} \ne 0$ for some $\lambda \in
\sigma_{EP}^+(A^N)$, and $(n|E_n|)$ is bounded. Using (\ref{eq3t3n12}) and the $b$-nonresonance
of $\sigma_{EP}(A^N)$, completely analogous arguments as in the proof
of Theorem \ref{thm350} show
that $(\log_b|A^{nN+m}|)$ is u.d.\ mod $1$. Since $m\in \{1, \dots , N\}$ was arbitrary, $(\log_b |A^n|)$ is u.d.\ mod $1$
as well, i.e., $(|A^n|)$ is $b$-Benford.
\end{proof}

\begin{cor}\label{cor370}
Let $A\in \R^{d\times d}$ and $b\in \N \setminus \{1\}$. Assume that
$A$ is invertible and $\sigma(A)$ is $b$-nonresonant. Then:
\begin{enumerate}
\item For every $x,y\in \R^d$ the sequence $(x^\top A^n y)$ is
  $b$-Benford or identically zero;
\item For every $x\in \R^d \setminus \{0\}$ the sequence $(|A^n x|)$
  is $b$-Benford;
\item The sequence $(|A^n|)$ is $b$-Benford.
\end{enumerate}
\end{cor}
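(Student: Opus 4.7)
The plan is to derive each of the three conclusions as a direct application of one of the preceding main results (Theorem \ref{thm34}, Theorem \ref{thm350}, Theorem \ref{thm360}), with invertibility of $A$ used solely to upgrade the ``terminating'' alternative to ``identically zero'' (for (i)) and to rule it out entirely (for (ii) and (iii)).

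For (i), I would invoke Theorem \ref{thm34}: since $\sigma(A)$ is $b$-nonresonant, for every $x,y\in\R^d$ the sequence $(x^\top A^n y)$ is either $b$-Benford or terminating. Because $A$ is invertible, Remark \ref{rem34a} applies, so ``terminating'' can be strengthened to ``identically zero'', giving exactly the desired dichotomy.

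For (ii), I would apply Theorem \ref{thm350} with $N=1$; the hypothesis is satisfied because $\sigma(A^1) = \sigma(A)$ is $b$-nonresonant. This gives that $(|A^n x|)$ is either $b$-Benford or terminating. But if $x \ne 0$, invertibility of $A$ forces $A^n x \ne 0$ for all $n\in\N$, so the sequence cannot be terminating and must be $b$-Benford.

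For (iii), I would apply Theorem \ref{thm360} with $N=1$. Here the key small observation is that $\sigma_{EP}(A)\subset \sigma(A)$, and since any subset of a $b$-nonresonant set is $b$-nonresonant (as noted immediately following Definition \ref{def31}), $\sigma_{EP}(A)$ is $b$-nonresonant. Theorem \ref{thm360} then yields that either $(|A^n|)$ is $b$-Benford or $A$ is nilpotent; but an invertible matrix is never nilpotent, so $(|A^n|)$ must be $b$-Benford. There is no real obstacle here — the corollary is a packaging statement that simply collects the previous three theorems under the cleanest hypothesis (invertibility plus nonresonance of $\sigma(A)$ itself), and the only thing to check carefully is that each ``terminating / nilpotent'' escape clause is closed off by invertibility.
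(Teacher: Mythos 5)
Your proof is correct and takes exactly the route the paper intends: Corollary \ref{cor370} is stated without proof precisely because it falls out of Theorems \ref{thm34}, \ref{thm350}, and \ref{thm360} in the way you describe, with invertibility ruling out the terminating/nilpotent alternatives (and you correctly use the remark following Definition \ref{def31} that subsets of $b$-nonresonant sets are $b$-nonresonant to conclude $\sigma_{EP}(A)$ is $b$-nonresonant for part (iii)).
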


\begin{rem}
(i) Theorems \ref{thm350} and \ref{thm360}, and hence also Corollary
\ref{cor370}(ii,iii), hold similarly with $|\cdot |$ replaced by any norm on
$\R^d$ and $\R^{d\times d}$, respectively.

(ii) When comparing Theorems \ref{thm350} and \ref{thm360} to Theorem
\ref{thm34}, the reader may wonder what would happen to the latter if
in its statement (ii) $b$-nonresonance was assumed merely for $\sigma (A^N)$ with
some $N\ge 2$, rather than for $\sigma (A)$. The answer is simple:
With (ii) thus modified, (i)$\Rightarrow$(ii) of Theorem \ref{thm34}
would remain unchanged whereas the converse (ii)$\Rightarrow$(i) would
fail because unlike its analogues (\ref{eq3t2n2}) and
(\ref{eq3t3n12}), the representation (\ref{eq39}) with $A^n$ replaced
by $A^{nN+m}$ may no longer be valid. Note that this is in perfect agreement with the
fact, following from Lemma \ref{lem36}, that if $\sigma(A^N)$ is
nonresonant for some $N\ge 2$ yet $\sigma (A)$ is resonant then there
exist $x,y\in \R^d$ with $0< \rho (\cN_{A,x,y})<1$.
\end{rem}

The converses of Theorems \ref{thm350} and \ref{thm360} do not hold in
general: Even if $\sigma (A^n)$ and $\sigma_{EP}(A^n)$, respectively, are $b$-resonant for all $n\in \N$, the
sequence $(|A^nx|)$ nevertheless may, for every $x\in \R^d$, be
$b$-Benford or terminating, and $(|A^n|)$ may be $b$-Benford. In fact,
as the next example shows, it is impossible to characterize the
$b$-Benford property of $(|A^n x|)$ and $(|A^n|)$ solely
in terms of $\sigma (A)$ and $\sigma_{EP}(A)$, respectively ---
except, of course, for the trivial case of $d=1$.

\begin{example}\label{exa314}
For convenience, fix $b=10$ and consider the (invertible) $2\times 2$-matrix
$$
A = 10^{\pi}  \left[
\begin{array}{rr}
\cos (\pi^2) & - \sin (\pi^2)  \\
\sin (\pi^2) & \cos (\pi^2) 
\end{array}
\right] \, .
$$
The set $\sigma(A^n) =
\sigma_{EP} (A^n) =
\{10^{\pi n} e^{\pm \pi^2 \imath  n}\}$ is $b$-resonant for every $n\in
\N$ because
$$
\pi n  =   \log_{10}  10^{\pi n} \in \mbox{\rm
  span}_{\Q} \Delta_{\sigma(A^n)} = \mbox{\rm span}_{\Q} \{1,\pi \}
\, .
$$
Nevertheless, $10^{-\pi n} A^n$ is simply a rotation, hence $|A^n
x|=10^{\pi n}|x|$ for every $x\in \R^2$, and since $\log_{10}
10^{\pi} = \pi$ is irrational, $(|A^n x|)$ is $10$-Benford
whenever $x\ne 0$. Similarly, $(|A^n|)=(10^{\pi n})$ is
$10$-Benford. Thus the nonresonance assumptions in Theorems \ref{thm350} and \ref{thm360},
respectively, are not necessary for the conclusion.

Consider now also the (invertible) matrix
$$
B = \frac{10^{\pi }}{\sqrt{3}}  \left[
\begin{array}{rr}
\sqrt{3} \cos (\pi^2 ) & -  3 \sin ( \pi^2 )  \\[1mm]
  \sin ( \pi^2 )  & \sqrt{3} \cos (\pi^2 ) 
\end{array}
\right] \, ,
$$
for which $\sigma(B) = \sigma_{EP}(B) = \{10^{\pi} e^{\pm \pi^2
  \imath  }\} = \sigma (A)$, and so $\sigma (B^n) =\sigma_{EP}(B^n)= \sigma_{EP}(A^n)$
is $b$-resonant for every $n\in \N$. As far as spectral data are
concerned, therefore, the matrices $A$ and $B$ are
indistinguishable. (In fact, they are similar.) However, from
$$
B^n = \frac{10^{\pi n}}{\sqrt{3}}  \left[
\begin{array}{rr}
\sqrt{3 }\cos (\pi^2 n ) & - 3 \sin (\pi^2 n )  \\[1mm]
 \sin (\pi^2 n )   & \sqrt{3} \cos (\pi^2 n )   
\end{array}
\right] \, , \quad \forall n\in \N_0 \, ,
$$
it follows for instance that
$$
|B^n e^{(2)}| = 10^{\pi n} \sqrt{2 - \cos (2\pi^2 n)} \, , \quad
\forall n\in \N_0 \, ,
$$
and consequently
$$
\bigl\langle  \log_{10} |B^n e^{(2)}| \bigr\rangle = \left\langle
  \pi n   +{\textstyle \frac12} 
\log_{10} \bigl( 2 - \cos (2\pi^2 n  ) \bigr)\right\rangle = f( \langle n\pi
\rangle ) \, ,
$$
with the smooth function $f:\T \to \T$ given by
$$
f(t) = t + {\textstyle \frac12} \log_{10} \bigl( 2 - \cos (2\pi t)\bigr) \, .
$$
Recall that
$(n\pi)$ is u.d.\ mod $1$. Since $f$ is a diffeomorphism of $\T$ with
non-constant derivative, it follows that $\bigl(f(\langle n \pi
\rangle) \bigr)$ is not u.d.\ mod $1$, basically because $\lambda_{\T}
\circ f^{-1} \ne \lambda_{\T}$ (cf.\ Appendix A). Thus $(|B^n
e^{(2)}|)$, and in fact $(|B^n x|)$ for {\em every\/} $x\in
\R^2\setminus \{0\}$, is
neither $10$-Benford nor identically zero. Similarly,
$$
|B^n| = \frac{10^{\pi n}}{\sqrt{3}} \sqrt{ 4 - \cos (2\pi^2 n)
 + |\sin (\pi^2 n )|\sqrt{14 - 2
    \cos (2\pi^2 n )}} \, , \quad \forall n \in \N_0 \, ,
$$
and a completely analogous argument shows that $(|B^n|)$ is not
$10$-Benford either.
\end{example}

\begin{example}\label{exa314a}
Let again $b=10$ for convenience and consider the $6\times 6$-matrix
$$
A = \mbox{\rm diag} \left[
\left[
\begin{array}{cc} 2 & 1 \\ 0 & 2 \end{array}
\right]
,
\left[
\begin{array}{rr} -2 & 1 \\ 0 & -2 \end{array}
\right]
,
\frac{2}{\sqrt{3}}\left[
\begin{array}{rr} \sqrt{3} \cos (\pi \log_{10}2) & - 3 \sin (\pi
  \log_{10}2) \\ \sin (\pi \log_{10}2) & \sqrt{3} \cos (\pi \log_{10}2) \end{array}
\right]
\right] \! \, ,
$$
for which $\sigma(A) = \{\pm 2, 2 e^{\pm \pi \imath
  \log_{10}2}\}\subset 2 \mS$. Since
$$
\log_{10} 2^n = n \log_{10}2 \in \mbox{\rm span}_{\Q} \{ 1 ,
\log_{10}2 \} \subset \mbox{\rm span} _{\Q} \Delta_{\sigma(A^n)} \, , 
$$
the set $\sigma (A^n)$ is $b$-resonant for every $n\in
\N$. Correspondingly, there exist $x,y\in \R^6$ for which the
sequence $(x^\top A^n y)$, and in fact $(|A^n x|)$ as well, is neither $10$-Benford
nor terminating. Essentially the same calculation as in Example
\ref{exa314} shows that one can take for instance $x=y =
e^{(6)}$. Note, however, that $(x^\top A^n y)$ is $10$-Benford
whenever $|x_1y_2|\ne |x_3 y_4|$, hence for {\em most\/} $x,y\in
\R^6$; see also Theorem \ref{thm41} below.

On the other hand, since $k(\pm 2)=2$ and $k(2e^{\pm \pi \imath
  \log_{10}2})=1$, the set $\sigma_{EP}(A)$ equals $\{\pm 2\}$ which
is also $b$-resonant, yet $\sigma_{EP}(A^2)=\{4\}$ is $b$-nonresonant. By Theorem
\ref{thm360}, therefore, the sequence $(|A^n|)$ is $10$-Benford. This
could also have been demonstrated by means of Lemma \ref{lem240} and
an explicit calculation yielding
$$
|A^n| = 2^{n-1} n (1 +\alpha_n) \, , \quad \forall n \in \N \, ,
$$ 
where $(\alpha_n)$ is a sequence in $\R$ with $\lim_{n\to \infty} n^2
\alpha_n = 4$.
\end{example}

The final theorem in this section characterizes the $b$-Benford
property of solutions $(x_n)$ to linear difference equations
(\ref{eq3n1}). The result, which has informally been mentioned already
in the Introduction, follows directly from Theorem \ref{thm34}.

\begin{theorem}\label{thm380}
Let $a_1, a_2, \ldots , a_{d-1}, a_d$ be real numbers with $a_{d}\ne 0$, and $b\in \N \setminus \{1\}$. Then
the following are equivalent:
\begin{enumerate}
\item Every solution $(x_n)$ of {\rm (\ref{eq3n1})} is $b$-Benford
  unless $x_1 = x_2 = \ldots = x_d = 0$;
\item With the polynomial $p(z) = z^d - a_1 z^{d-1} - a_2 z^{d-2} - \ldots -
  a_{d-1}z - a_d$, the set $\{ z\in \C: p(z)=0\}$ is $b$-nonresonant.
\end{enumerate}
\end{theorem}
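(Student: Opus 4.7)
My plan is to reduce the theorem to Theorem \ref{thm34} applied to the companion matrix $A$ in (\ref{eq3n2}). Two setup observations will be needed: first, $a_d\ne 0$ forces $A$ to be invertible; second, a standard computation (expansion of $\det(zI_d - A)$ along the last column, or induction on $d$) shows that $p$ is the characteristic polynomial of $A$, so $\sigma(A) = \{z\in \C : p(z)=0\}$. Hence condition (ii) of the present theorem is precisely $b$-nonresonance of $\sigma(A)$, which is the form required to invoke Theorem \ref{thm34}.

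For the direction (ii)$\Rightarrow$(i), I will combine Theorem \ref{thm34} with the representation (\ref{eq3n3}). Under (ii), Theorem \ref{thm34} tells me that $((e^{(d)})^\top A^n y)$ is $b$-Benford or terminating for every $y\in \R^d$. Since $A$ is invertible, the map $y\mapsto (x_1, \ldots , x_d)$ implicit in (\ref{eq3n3}) is a bijection, so every solution of (\ref{eq3n1}) arises this way. Invertibility also lets me appeal to Remark \ref{rem34a}, which replaces ``terminating'' by ``identically zero for all $n\in \N$''; any terminating solution therefore has $x_1 = \ldots = x_d = 0$. This yields (i).

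For the reverse direction (i)$\Rightarrow$(ii), I plan to verify condition (i) of Theorem \ref{thm34} directly and then appeal to that theorem. Fix $x,y\in \R^d$ and set $z_n := x^\top A^n y$. By Cayley--Hamilton applied to the characteristic polynomial $p$, one has $A^d = a_1 A^{d-1} + \ldots + a_d I_d$. Right-multiplying by $A^{n-d}$ and then sandwiching by $x^\top$ on the left and $y$ on the right shows that $(z_n)$ itself solves (\ref{eq3n1}) with initial conditions $z_1, \ldots , z_d$. Hypothesis (i) then says either $(z_n)$ is $b$-Benford, or $z_1 = \ldots = z_d = 0$, in which case the recursion immediately forces $z_n = 0$ for all $n\in \N$ and in particular $(z_n)$ is terminating. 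Thus condition (i) of Theorem \ref{thm34} is satisfied, and Theorem \ref{thm34} delivers (ii).

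I expect the most delicate spot to be the ``terminating $\Rightarrow$ $x_1 = \ldots = x_d = 0$'' step in the first implication. Since (\ref{eq3n1}) is posed in forward time only, one needs either Remark \ref{rem34a} (which relies on $A$ being invertible, hence on $a_d\ne 0$) or, equivalently, the backward recursion $x_{n-d} = a_d^{-1}(x_n - a_1 x_{n-1} - \ldots - a_{d-1} x_{n-d+1})$ to propagate the zeros from $n\ge d$ all the way back to the initial values. Once that observation is in place, the rest of the argument is essentially a translation between the matrix formulation of Theorem \ref{thm34} and the scalar formulation of the present theorem.
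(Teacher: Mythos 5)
Your proposal is correct and follows essentially the same route as the paper: identify $\{z : p(z)=0\}$ with $\sigma(A)$ for the companion matrix $A$, then translate between the scalar recursion and the matrix form via (\ref{eq3n3}), using invertibility of $A$ (i.e.\ $a_d\ne 0$) to upgrade ``terminating'' to ``identically zero'' as in Remark \ref{rem34a} and Corollary \ref{cor370}. The only cosmetic difference is that the paper proves (i)$\Rightarrow$(ii) by contraposition (producing a counterexample pair $x,y$ from $b$-resonance), while you verify condition (i) of Theorem \ref{thm34} directly and then invoke that theorem; both are equally valid.
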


\begin{proof}
For convenience, let $\cZ:= \{z\in \C: p(z) = 0\}$. Note that $\cZ =
\sigma(A)$ for the matrix $A$ associated with (\ref{eq3n1}) via
(\ref{eq3n2}) because
\begin{align*}
\chi_A(z) & = \det (A - zI_d) = (-1)^d (z^d - a_1 z^{d-1} - a_2 z^{d-2} - \ldots -
  a_{d-1}z - a_d)\\
& = (-1)^d p(z) \, .
\end{align*}

To prove (i)$\Rightarrow$(ii), assume $\cZ$ is $b$-resonant. By
Theorem \ref{thm34} there exist $x,y\in \R^d$ for which $(x^\top A^n
y)$ is neither $b$-Benford nor terminating. Recall (e.g.\ from the
proof of Lemma \ref{lem35}) that $(x_n)$ with $x_n:= x^\top A^n y$
for all $n\in \N$ is a solution of (\ref{eq3n1}). By the choice of
$x,y$, the sequence $(x_n)$ is neither $b$-Benford nor terminating, let
alone identically zero. Hence (i) fails whenever (ii) fails, that is, (i)$\Rightarrow$(ii).

To establish the reverse implication (ii)$\Rightarrow$(i), recall from
(\ref{eq3n3}) that 
$$
x_n = (e^{(d)})^\top A^{n-1}y \, , \quad \forall n\in \N\, ,
$$
where $y = \sum_{j=1}^d x_{d+1 - j} e^{(j)}$. As $A$ is invertible, if $\cZ = \sigma (A)$ is $b$-nonresonant then, by Corollary \ref{cor370}, 
$(x_n)$ is either $b$-Benford or identically zero.
\end{proof}

\begin{example}\label{ex16a}
The set associated, via Theorem \ref{thm380}, with the familiar
difference equation
\begin{equation}\label{eq3ex316a}
x_n = x_{n-1} + x_{n-2} \, , \quad \forall n \ge 3 \, ,
\end{equation}
i.e.\ $\{ z\in \C : z^2 - z - 1 =0 \} = \{\varphi , - \varphi^{-1}\}$, is
$b$-nonresonant for every $b\in \N \setminus \{1\}$, see Example
\ref{ex39}(i). Except for the trivial solution $x_n \equiv 0$,
therefore, every solution $(x_n)$ of (\ref{eq3ex316a}) is Benford. This
contains as special cases the well-known sequences of Fibonacci and
Lucas numbers corresponding to the initial values $x_1
=x_2 =1$ and $x_1=2$, $x_2 = 1$, respectively.
\end{example}

\begin{example}\label{ex316b}
This example reviews, in the light of Theorem \ref{thm380}, the
second-order difference equation (\ref{eqrec1})
for the three specific values of the parameter $\gamma\in \R$ already
considered in the Introduction (recall Figure \ref{fig1}). For convenience, let
$b=10$ throughout. Note that the set associated with
(\ref{eqrec1}) is $\cZ = \cZ_{\gamma} = \{ z\in \C : z^2 = 2 \gamma z - 5\} =
\{\gamma \pm \imath \sqrt{5 - \gamma ^2}\}$, and so for
$|\gamma|<\sqrt{5}$ equals $\{\sqrt{5} e^{\pm \imath \arg z}\}\subset
\sqrt{5} \mS$ with
$\arg z = \arccos ( \gamma / \sqrt{5})  \in (0, \pi)$.

(i) Let $\gamma = \sqrt{5} \cos (\pi/ \sqrt{8} ) = 0.9928$. Then $\arg
z = \pi/\sqrt{8}$, and since
$$
  \log_{10} 5 \not \in \mbox{\rm span}_{\Q}
\Delta_{\cZ_{\gamma}} = \mbox{\rm span}_{\Q} \{1,\sqrt{2}\} \, ,
$$
the set $\cZ_{\gamma}$ is $b$-nonresonant. By Theorem \ref{thm380}, except for
$x_n \equiv 0$, {\em every\/} solution $(x_n)$ of (\ref{eqrec1}) is $10$-Benford.

(ii) Next, consider the case of $\gamma = \sqrt{5} \cos (\frac12 \pi
\log_{10}5)=1.018$. Now $\arg z = \frac12 \pi \log_{10}5$, and since
obviously
$$
 \log_{10}5 \in \mbox{\rm span}_{\Q}
\Delta_{\cZ_{\gamma}} = \mbox{\rm span}_{\Q} \{1,\log_{10} 5\} \, ,
$$
the set $\cZ_{\gamma}$ is $b$-resonant. It is clear
  that {\em no\/} solution of
(\ref{eqrec1}) is $10$-Benford in this case.

(iii) Finally, let $\gamma =1$. Here $\arg z = \arccos (1/\sqrt{5} ) =
\arctan 2$. It is not hard to see that $\frac1{\pi} \arctan 2$ is
irrational (as is, of course, $\log_{10} 5$). Thus, the
$b$-nonresonance of $\cZ_{\gamma}$ is equivalent to $\log_{10}5 \not \in
\mbox{\rm span}_{\Q} \{1, \frac1{\pi} \arctan 2\}$. It appears to be
unknown, however, whether the three numbers $1$, $\log_{10}5$, $\frac1{\pi}
\arctan 2$ are $\Q$-independent. If they are, then every non-trivial
solution of (\ref{eqrec1}) is $10$-Benford; otherwise none
is. As seen in Figure \ref{fig2}, numerical evidence seems to be in support
of the former alternative. (Rational independence of $1$, $\log_{10}
5$, and $\frac1{\pi} \arctan 2$, and thus $10$-nonresonance of
$\cZ_{\gamma}$ for $\gamma =1$ would follow immediately from Schanuel's
conjecture, a prominent but as yet unproven assertion in number theory
\cite[Sec.1.4]{wald}.)
\end{example}

\begin{figure}[ht]
\begin{center}
\psfrag{tp1}[]{\footnotesize $1$}
\psfrag{tpm0}[]{\footnotesize $0$}
\psfrag{tm1}[]{\footnotesize $-1$}
\psfrag{tp10}[]{\footnotesize $10$}
\psfrag{tp100}[]{\footnotesize $100$}
\psfrag{tp1000}[]{\footnotesize $1000$}
\psfrag{tp10000}[]{\footnotesize $10000$}
\psfrag{tt1}[l]{$x_n = 2\gamma  x_{n-1} - 5 x_{n-2}\, ,\quad \forall n\ge 3$}
\psfrag{tt2}[l]{$x_1 = x_2 = 1$}
\psfrag{tanb}[l]{\textcolor{black}{$\gamma =1.018$}}
\psfrag{tanb2}[l]{\textcolor{red}{{\bf not Benford}}}
\psfrag{tabf}[l]{\textcolor{black}{$\gamma = 0.9928 $}}
\psfrag{tabf2}[l]{\textcolor{blue}{{\bf Benford}}}
\psfrag{taun}[l]{\textcolor{black}{$\gamma = 1$}}
\psfrag{taun2}[l]{\textcolor{cyan}{{\bf unknown}}}
\psfrag{tlogdel}[l]{$\log \beta_N$}
\psfrag{tnn}[]{$N$}
\psfrag{betan}[l]{\small $\beta_N = \max_{d_1=1}^9 \left| {\displaystyle \frac{
    \# \{ n \le
      N : \mbox{\em leading digit}_{10} (x_n)  = d_1\}}{N}}  - \log_{10}(1 +d_1^{-1})\right|$}
%
%
\includegraphics{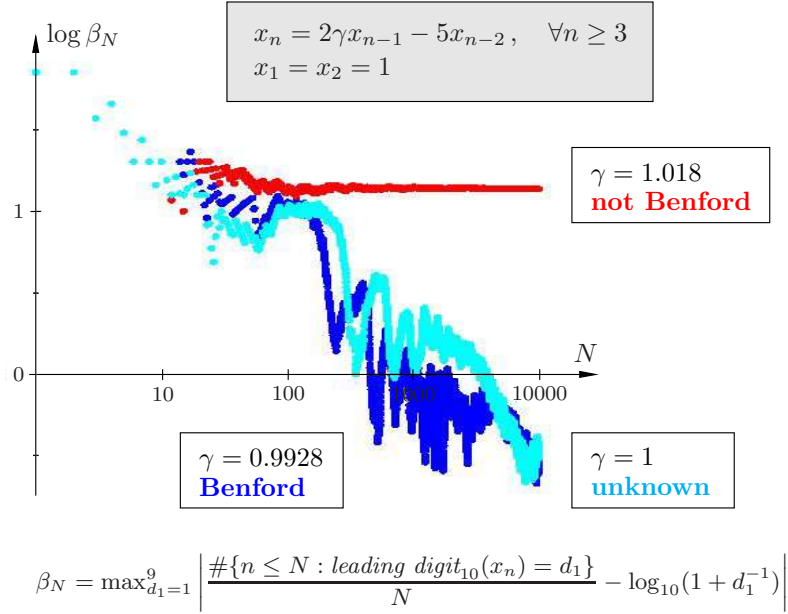}
\caption{For different values of the parameter $\gamma$, the solutions $(x_n)$ of
  (\ref{eqrec1}) may or may not be $10$-Benford; see Example
  \ref{ex316b} and also Figure \ref{fig1}.}\label{fig2}
\end{center}
\end{figure} 

\begin{rem}
Earlier, weaker forms and variants of the implication (ii)$\Rightarrow$(i) in
Theorems \ref{thm34} and \ref{thm380}, or special cases thereof, can
be traced back at least to \cite{NSh} and may also be found in
\cite{BDCDSA, BHPS, BS, KNRS, Sch88}. The reverse implication
(i)$\Rightarrow$(ii) seems to have been addressed previously only for $d<4$; see
\cite[Thm.5.37]{BHPS}. For the special case of $b=10$, partial proofs of Theorems
\ref{thm34} and \ref{thm380} have been presented in \cite{BE, BHPUP}.
\end{rem}

\section{Further examples and concluding remarks}\label{sec4}

This final section illustrates how key results of this
article (Theorems \ref{thm34} and \ref{thm380}) may take a
significantly different (and arguably simpler) form if
either their conclusion is weakened slightly or one additional assumption
is imposed. Concretely, Theorem \ref{thm34} for instance may be weakened in that
its $b$-Benford-or-terminating dichotomy (i) is assumed to hold only for
(Lebesgue) {\em almost all\/} $(x,y)\in \R^{d}\times
\R^{d}$. Alternatively, it may be assumed that the
matrix $A^N$ is {\em positive\/} for some $N\in \N$. As detailed
below, either of these modifications gives rise to
new forms of the results that may be of independent interest.

As throughout, $b\ge 2$ is a positive integer, and given any $A\in
\R^{d\times d}$, let
$$
\B_b (A):= \bigl\{(x,y)\in \R^{d}\times \R^{d} : (x^\top A^n y) \, \mbox{\rm is
$b$-Benford}\bigr\}\, .
$$
Denote Lebesgue measure on $\R^d \times \R^d$ by $\mbox{\rm
  Leb}_{d,d}$. Also recall from (\ref{eq359}) the definition of the
extremal peripheral spectrum $\sigma_{EP}(A)$. Although
$\sigma_{EP}(A)$ may constitute only a small part of $\sigma(A)$, it
nevertheless controls the Benford property of {\em most\/} sequence
$(x^\top A^n y)$. More precisely, the following variant of Theorem \ref{thm34}
holds.

\begin{theorem}\label{thm41}
Let $A\in \R^{d\times d}$ and $b\in \N\setminus \{1\}$. Assume $A$ is not nilpotent. Then the following are equivalent:
\begin{enumerate}
\item For almost every $(x,y)\in \R^d \times \R^d$ the sequence
  $(x^\top A^n y)$ is $b$-Benford, i.e., $\R^d \times \R^d \setminus
  \B_b(A)$ is a $\mbox{\rm Leb}_{d,d}$-nullset;
\item The set $\sigma_{EP}(A^N)$ is $b$-nonresonant for some $N\in \N$.
\end{enumerate}
\end{theorem}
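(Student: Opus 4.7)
The proof splits into two implications, each mirrored on an existing argument in the paper. For (ii) $\Rightarrow$ (i) I would parallel the proof of Theorem \ref{thm360}, but applied to the sesquilinear observable $x^{\top}A^n y$. Fix $N$ so that $\sigma_{EP}(A^N)$ is $b$-nonresonant and $m\in\{1,\ldots,N\}$. The dominant-term analysis from (\ref{eq3t3n1})--(\ref{eq3t3n12}) then yields
\[
x^{\top}A^{nN+m}y \;=\; r_{\sigma}(A)^{nN}\, n^{k_{\max}}\!\left(\Re\!\sum_{\lambda\in\sigma_{EP}^{+}(A^N)}\! c_{\lambda}(x,y,m)\,e^{\imath n\arg\lambda} \;+\; E_n(x,y,m)\right),
\]
where $(nE_n)$ is bounded and each $c_{\lambda}(x,y,m)\in\C$ depends bilinearly on $(x,y)$. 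Because $A$ is not nilpotent, every coefficient matrix $C_{\lambda}$ in the analogous expansion of $A^{nN+m}$ is nonzero, so each bilinear form $c_{\lambda}$ is nonzero as a polynomial in $(x,y)$ (a witness is produced via Lemma \ref{lem24a} applied to the real and imaginary parts of an eigenvector). The set where $c_{\lambda_1},\ldots,c_{\lambda_{L_0}}$ vanish simultaneously for the $\Q$-independent representatives is therefore contained in a proper real-algebraic subvariety of $\R^{d}\times\R^{d}$, hence is a $\mathrm{Leb}_{d,d}$-nullset. Off this nullset, the nonresonance argument from Theorem \ref{thm360}---using Lemmas \ref{lem220} and \ref{lem230}---shows that $(\log_b |x^{\top}A^n y|)$ is u.d.\ mod $1$.

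For (i) $\Rightarrow$ (ii) I argue contrapositively. Using $\mathrm{span}_{\Q}\Delta_{\sigma_{EP}(A)^N}=\mathrm{span}_{\Q}\Delta_{\sigma_{EP}(A)}$ (which holds because multiplication by $N\in\N$ preserves $\Q$-spans and integer shifts lie in $\mathrm{span}_{\Q}\{1\}$) together with the paragraph preceding Theorem \ref{thm350}, the hypothesis ``$\sigma_{EP}(A^N)$ is $b$-resonant for every $N$'' forces condition (ii) of Definition \ref{def31} to fail for $\sigma_{EP}(A)$, i.e.\ $\log_b r_{\sigma}(A)\in\mathrm{span}_{\Q}\Delta_{\sigma_{EP}(A)}$. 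Choosing $\lambda_1,\ldots,\lambda_{L_0}\in\sigma_{EP}^{+}(A)$ so that with $\alpha_{\ell}:=\arg\lambda_{\ell}/(2\pi)$ the numbers $1,\alpha_1,\ldots,\alpha_{L_0}$ form a $\Q$-basis of $\mathrm{span}_{\Q}\Delta_{\sigma_{EP}(A)}$, there exist integers $p_0,\ldots,p_{L_0}$ and $q\in\N$ with $q\log_b r_{\sigma}(A)=p_0+\sum_{\ell}p_{\ell}\alpha_{\ell}$. For generic $(x,y)$ the same dominant-term analysis produces, modulo $1$,
\[
q\log_b |x^{\top}A^n y| \;\equiv\; \sum_{\ell=1}^{L_0} p_{\ell}(n\alpha_{\ell}) \;+\; q\log_b\!\bigl|F_{x,y}(n\alpha_1,\ldots,n\alpha_{L_0})\bigr| \;+\; o(1),
\]
where $F_{x,y}$ is a real trig polynomial on $\T^{L_0}$ depending bilinearly on $(x,y)$, the $qk_{\max}\log_b n$ term having been absorbed via Lemma \ref{lem200}(v). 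I would then invoke Lemma \ref{lem2Omega}, upgraded to assert that the set of coefficient vectors producing a non-Haar pushforward of $\lambda_{\T^{L_0}}$ under $t\mapsto \sum_{\ell} p_{\ell}t_{\ell}+q\log_b|F(t)| \pmod 1$ is not merely nonempty but of positive Lebesgue measure, and pull this back through the surjective bilinear parametrization $(x,y)\mapsto F_{x,y}$ to exhibit a positive-measure set of $(x,y)$ for which $(x^{\top}A^n y)$ fails to be $b$-Benford.

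The chief obstacle is the upgrade of Lemma \ref{lem2Omega} from a pointwise existence statement to a positive-measure statement. The natural route is to note that the pushforward measure on $\T$ depends continuously on the coefficient vector $u$ in the weak-$*$ topology (the logarithmic singularities of $\log_b|F_u|$ being integrable, so Fourier coefficients of the pushforward are continuous functions of $u$), whence the bad set is open; this probably requires revisiting the appendix proof of Lemma \ref{lem2Omega} to extract a concrete Fourier coefficient witnessing non-uniformity on a neighborhood of a known bad parameter. The remaining ingredients---dominant-term expansions, algebraic nullset arguments, and the tools from Section \ref{sec2}---are routine variants of the corresponding steps in Theorems \ref{thm34}, \ref{thm350}, and \ref{thm360}.
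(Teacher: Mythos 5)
Your approach mirrors the paper's proof of Theorem \ref{thm41} in both directions: (ii)$\Rightarrow$(i) via the dominant-term expansion plus an algebraic-nullset argument paralleling Theorem \ref{thm360}, and (i)$\Rightarrow$(ii) contrapositively via Lemma \ref{lem2Omega} plus an openness argument. The one substantive gap you flag --- upgrading Lemma \ref{lem2Omega} from a pointwise existence statement to something producing a non-null bad set of $(x,y)$ --- is in fact already available in the paper's appendix: Lemma \ref{lema7} shows that the Fourier coefficient $J(u)=\widehat{\lambda_{\T^d}\circ\Lambda_u^{-1}}(2)$ is real-analytic and non-constant as a function of $u$ on each connected component of $\cE_d$, so (Lemma \ref{lema8}) its zero set in $\cE_d$ is nowhere dense and Lebesgue-null, and $\{u:J(u)\neq0\}$ is open and co-null in $\cE_d$. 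The terse ``continuity'' step in the paper's proof of Theorem \ref{thm41} is exactly your proposed weak-$*$-continuity route, already carried out: having chosen $u$ with $J(u)\neq0$, continuity of $J$ composed with the bilinear coefficient map $(x,y)\mapsto(x^\top C_{\lambda_\ell}y)_\ell$ keeps $J\neq0$ --- hence the sequence non-Benford --- on an entire open neighbourhood of $(x_u,y_u)$, and a nonempty open set is not a nullset. So the route you outline is correct and is the paper's route; the ``revisiting of the appendix'' you anticipate has essentially been done for you.

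Two lesser points. In (ii)$\Rightarrow$(i), the paper asserts $C_\lambda\neq0$ only for $\lambda$ in a nonempty subset $\sigma^{++}\subset\sigma_{EP}^+(A^N)$ (which may depend on $m$); claiming that \emph{every} $C_\lambda$ is nonzero after multiplying by $A^m$ is stronger than needed and not what the paper establishes, though the nullset argument goes through with $\sigma^{++}$ alone. In (i)$\Rightarrow$(ii), your argument as written covers only $L_0\ge1$. The degenerate case $L_0=0$, where all relevant arguments are rational and there is no nontrivial trigonometric polynomial to exploit, must be handled separately: the paper does so by showing that $x^\top A^n y$ then carries a $q$-periodic factor at scale $r_\sigma(A)^n n^{k_{\max}}$ and invoking Lemma \ref{lem240} to conclude that $\B_b(A)$ is itself contained in a nullset.
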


\begin{proof}
To demonstrate (i)$\Rightarrow$(ii), assume that
$\sigma_{EP}(A^n)=\sigma_{EP}(A)^n$ is $b$-resonant for every $n\in
\N$, and hence $\log_b r_{\sigma}(A)\in \mbox{\rm span}_{\Q}
\Delta_{\sigma_{EP}(A)}$. In analogy to (\ref{eq3t3n12}), write
\begin{equation}\label{eq4t1p1}
A^n = r_{\sigma}(A)^n n^{k_{\max}} \left(
\Re \left( 
\sum\nolimits_{\lambda \in \sigma_{EP}^+(A)} C_{\lambda} e^{\imath n
  \arg \lambda} 
\right) + E_n
\right) \, , \quad \forall n \in \N \, ,
\end{equation}
where $C_{\lambda} \in \C^{d\times d}$ for every $\lambda \in
\sigma_{EP}^+(A)$, and $(E_n)$ is a sequence in $\R^{d\times d}$ for
which $(n|E_n|)$ is bounded. If $C_{\lambda}=0$ for
all $\lambda\in \sigma_{EP}^+(A)$, then (\ref{eq4t1p1}) would imply that
$$
\lim\nolimits_{n\to \infty} \frac{|A^n|}{r_{\sigma}(A)^n
  n^{k_{\max}}} = 0 \, ,
$$
whereas on the other hand there always exist $x,y\in \R^d$ with
$$
1 \le \limsup\nolimits_{n\to \infty} \frac{|x^\top A^n y|}{r_{\sigma}(A)^n
  n^{k_{\max}}} \le |x| \, | y | \limsup\nolimits_{n\to \infty} \frac{|A^n|}{r_{\sigma}(A)^n
  n^{k_{\max}}} \, .
$$
This contradiction shows that $C_{\lambda}\ne 0$ for some $\lambda\in
\sigma_{EP}^+(A)$.

Similarly to the proofs in the previous section, let $L_0 + 1$ be the
dimension of $\mbox{\rm span}_{\Q}\Delta_{\sigma_{EP}(A)}$ and consider
first the case of $L_0 = 0$. Here, with the appropriate $q\in \N$, the
numbers $q\log_b r_{\sigma}(A)$ and $q\frac1{2\pi} \arg \lambda$ for
all $\lambda \in \sigma_{EP}^+(A)$ are integers, and so
(\ref{eq4t1p1}) takes the form
\begin{equation}\label{eq4t1p2}
A^n = r_{\sigma}(A)^n n^{k_{\max}} (B_n + E_n) \, , \quad \forall n
\in \N \, ,
\end{equation}
where the sequence $(B_n)$ in $\R^{d\times d}$ is $q$-periodic, i.e.\
$B_{n+q} = B_n$ for all $n\in \N$. Suppose that $B_{\ell} = 0$ for
some $\ell \in \{1, \ldots , q\}$. Then
$$
\lim\nolimits_{n\to \infty} \frac{|A^{nq + \ell}|}{r_{\sigma}(A)^{nq +
  \ell} (nq + \ell)^{k_{\max}}} = 0 \, ,
$$
whereas similarly as before,
$$
\limsup\nolimits_{n\to \infty} \frac{|x^\top A^{nq + \ell}
  y|}{r_{\sigma}(A)^{nq + \ell} (nq + \ell)^{k_{\max}}} \ge 1 
$$
with the appropriate $x,y\in \R^d$. This contradiction shows that $B_{\ell}\ne 0$
for every $\ell \in \{1, \ldots , q\}$. Consequently, for each $\ell$ the set
$$
\cR_{\ell} := \bigl\{ (x,y) \in \R^d \times \R^d : x^\top B_{\ell} \,
y = 0 \bigr\}
$$
is a $\mbox{\rm Leb}_{d,d}$-nullset, and so is $\cR:=
\bigcup_{\ell = 1}^q \cR_{\ell}$. Whenever $(x,y)\not \in \cR$, it
follows from (\ref{eq4t1p2}) that
$$
\log_b |x^\top A^n y| = n \log_b r_{\sigma}(A) + k_{\max}\log_b n
+ \log_b |x^\top B_n y + x^\top E_n y|
$$
for all sufficiently large $n$, and since $\log_b r_{\sigma}(A)$ is
rational and $(x^\top B_n y)$ is periodic, Lemma \ref{lem240} shows
that $(x^\top A^n y)$ is not $b$-Benford. In other words,
$\B_b(A) \subset \cR$, so in particular $\R^d \times \R^d \setminus
\B_b(A)$ is not a nullset, i.e., (i) fails.

It remains to consider the case of $L_0\ge 1$. In this case,
label the elements of $\sigma_{EP}^+(A)$ as $\lambda_1 , \ldots ,
\lambda_L$ with $L\ge L_0$ and assume w.l.o.g.\ that 
the $L_0 + 1$ numbers $1, \frac1{2\pi} \arg
\lambda_1 , \ldots , \frac1{2\pi} \arg \lambda_{L_0}$ are
$\Q$-independent. Given any $u\in \R^{L_0}$, there exist $x_u, y_u \in
\R^d$ such that
\begin{align}\label{eq4t1p3}
x_u^\top A^n y_u & = r_{\sigma}(A)^n n^{k_{\max}} \left( 
\sum\nolimits_{\ell = 1}^{L_0} u_{\ell} \cos (n \arg \lambda_{\ell}) +
z_n 
\right) \nonumber \\ 
& =
r_{\sigma}(A)^n n^{k_{\max}} \left( 
\Re \left(
\sum\nolimits_{\ell =1}^{L_0} u_{\ell} e^{\imath n \arg \lambda_{\ell}}
\right) + z_n 
\right)
 \, , \quad \forall n \in \N \, ,
\end{align}
where $(n z_n)$ is a bounded sequence in $\R$. On the other hand,
(\ref{eq4t1p1}) implies
\begin{equation}\label{eq4t1p4}
x_u^\top A^n y_u =  r_{\sigma}(A)^n n^{k_{\max}} \left( 
\Re \left(
\sum\nolimits_{\lambda\in \sigma_{EP}^+(A)} x_u^\top C_{\lambda} y_u
e^{\imath n \arg \lambda}
\right) + x_u^\top E_n y_u
\right) \, , \!\! \! \quad \forall n \in \N \, .
\end{equation}
Comparing (\ref{eq4t1p3}) and (\ref{eq4t1p4}) yields
$$
\Re \left(
\sum\nolimits_{\ell = 1}^{L_0} u_{\ell} e^{\imath n \arg
  \lambda_{\ell}}
-
\sum\nolimits_{\lambda \in \sigma_{EP}^+ (A)} x_u^\top C_{\lambda} y_u
e^{\imath n \arg \lambda}
\right) \: \stackrel{n\to \infty}{\longrightarrow} \: 0 \, .
$$
Lemma \ref{cor34c} shows that $x_u^\top C_{\lambda_{\ell}} y_u =
u_{\ell}$ for every $\ell \in \{ 1, \ldots , L_0\}$, and $x_u^\top
C_{\lambda_{\ell}} y_u =0$ for every $\ell \in \{L_0 + 1 , \ldots ,
L\}$. Recall now that $\log_b r_{\sigma(A)}\in \mbox{\rm
  span}_{\Q}\Delta_{\sigma_{EP}^+(A)}$. Lemma \ref{lem2Omega} guarantees
that it is possible to choose $u\in \R^{L_0}$ in such a way that the
sequence $(x_u^\top A^n y_u)$ in (\ref{eq4t1p3}) is neither
$b$-Benford nor terminating. The continuity of the map
$$
\left\{
\begin{array}{ccl}
\R^d \times \R^d & \to & \C^L \\
(x,y) & \mapsto &  (x^\top C_{\lambda_1} y , \ldots , x^\top
C_{\lambda_L} y) 
\end{array}
\right.
$$
implies that $(x^\top A^n y)$ is not $b$-Benford whenever $x$ and $y$
are sufficiently close to $x_u$ and $y_u$, respectively. Thus $\R^d
\times \R^d \setminus \B_b(A)$ contains a non-empty open set, and so
again (i) fails. This completes the proof of (i)$\Rightarrow$(ii).

To establish the reverse implication (ii)$\Rightarrow$(i), let
$\sigma_{EP}(A^N)$ be $b$-nonresonant and fix any $m\in \{1, \ldots ,
N\}$. It follows from (\ref{eq4t1p1}) that 
$$
A^{nN+m} = r_{\sigma}(A)^{nN} n^{k_{\max}} \left(
\Re \left(
\sum\nolimits_{\lambda \in \sigma^{++}} C_{\lambda} e^{\imath n \arg
  \lambda} 
\right) + E_n
\right) \, , \quad \forall n \in \N \, ,
$$
where $\sigma^{++} \subset \sigma_{EP}^+(A^N)$ is non-empty,
$C_{\lambda} \in \C^{d\times d} \setminus \{0\}$ for every $\lambda
\in \sigma^{++}$, and $(n|E_n|)$ is bounded. (Once again it should be
noted that the set $\sigma^{++}$, the matrices $C_{\lambda}$ and the
sequence $(E_n)$ may all vary with $m$.) The set
$$
\cR_{m,\lambda}:= \bigl\{
(x,y) \in \R^d \times \R^d : x^\top C_{\lambda} y = 0 
\bigr\}
$$
is a $\mbox{\rm Leb}_{d,d}$-nullset, and so is $\cR:= \bigcup_{m=1}^N
\bigcup_{\lambda \in \sigma^{++}} \cR_{m, \lambda}$. Whenever
$(x,y)\not \in \cR$, an argument completely analogous to the one
establishing (ii)$\Rightarrow$(i) in Theorem \ref{thm34} shows that
$(x^\top A^n y)$ is $b$-Benford. Thus $\R^d \times \R^d \setminus
\B_b(A) \subset \cR$, and the proof is complete.
\end{proof}

\begin{example}\label{ex42}
(i) As seen in Example \ref{ex39}(iii) the matrix $A = \frac12
\left[
\begin{array}{cc} 1 + \pi & 1 - \pi \\ 1 - \pi & 1 + \pi
\end{array}
\right]$
has $\sigma(A)=\{1,\pi\}$ $b$-resonant for every $b$. However,
$\sigma_{EP}(A) = \{\pi\}$ is $b$-nonresonant, and since
\begin{equation}\label{eq43}
A^n = \frac{\pi^n}{\pi - 1} (A- I_2) + \frac1{\pi - 1} (\pi I_2 - A)
\end{equation}
for all $n\in \N$, it is clear that 
\begin{align*}
\R^2 \times \R^2 \setminus \B_b
(A) & = \bigl\{ (x,y) \in
\R^2 \times \R^2 : x^\top (A-I_2)y = 0 \bigr\}\\
& =\bigl\{ (x,y) \in \R^2 \times \R^2 : (x_1 - x_2) (y_1 - y_2) = 0 \bigr\}
\end{align*} 
is a nullset. Also, $(|A^n x|)$ is Benford unless $Ax = x$.

(ii) Let $B:= A^{-1}$. Then $\sigma(B) = \{\pi^{-1}, 1\}$, so
$\sigma_{EP}(B^n) =\{1\}$ is $b$-resonant for every $b$ and $n\in \N$. Since
(\ref{eq43}) actually holds for all $n\in \Z$, the sequence $(x^\top
B^n y)$ can only be $b$-Benford if $x^\top (\pi I_2 - A)y=0$, i.e.
\begin{align*}
\B_b (B) & \subset \bigl\{ (x,y)\in \R^2 \times \R^2 : x^\top (\pi I_2 - A)y
= 0\bigr\}\\
& = \bigl\{ (x,y) \in \R^2 \times \R^2 : (x_1 + x_2) (y_1 + y_2) = 0
\bigr\} \, , 
\end{align*}
showing that $\B_b (B)$ is a nullset in this case.
Similarly, $(|B^n x|)$ can only be Benford if $Bx=\pi^{-1} x$.
\end{example}

\begin{rem}\label{rem42a}
Recall from Theorem \ref{thm350} that $(|A^nx|)$ is $b$-Benford
provided that $\sigma(A^N)$ is $b$-nonresonant for some $N\in
\N$. If $A$ is not nilpotent then $(|A^nx|)$ is terminating only if
$x$ is an element of the proper subspace (and hence nullset)
$\mbox{\rm ker} A^d$. For {\em almost\/} all $x\in \R^d$, therefore,
$(|A^n x|)$ is $b$-Benford. As it turns out, a much weaker assumption
suffices to guarantee the latter conclusion: Similarly to Theorem \ref{thm41}, it can be shown that
$b$-nonresonance of $\sigma_{EP}(A^N)$ for some $N$ implies
that $(|A^n x|)$ is $b$-Benford for almost all $x\in
\R^d$. Unlike in Theorem \ref{thm41} (yet much like in Theorem
\ref{thm350}), the converse does not
hold in general. In fact, as demonstrated already by Example \ref{exa314},
it is impossible to characterize the $b$-Benford property of $(|A^n
x|)$ for almost all $x\in \R^d$ using only $\sigma (A)$, let alone $\sigma_{EP}(A)$.
\end{rem}

The following variant of Theorem \ref{thm380} is motivated by
Theorem \ref{thm41}. Recall that $\cZ^n = \{z^n : z \in \cZ\}$ for any
$\cZ \subset \C$. If $p=p(z)$ is a non-constant polynomial and $\cZ =
\{z\in \C : p(z) = 0\}$, let $\zeta := \max_{z\in \cZ}|z|$
and, for each $z\in \cZ$, let $k(z)$ be the multiplicity of $z$ as a
root of $p$, that is, $k(z) = \min \{n\in \N : p^{(n)}(z) \ne 0\}$. In
analogy to the extremal peripheral spectrum, define
$$
\cZ_{EP} : = \{z\in \C : p(z) = 0\}_{EP} := \{z\in \cZ \cap \zeta \mS : k(z)
= k_{\max}\} \, ,
$$
where $k_{\max}  := \max\{k(z) : z\in \cZ \cap \zeta \mS\}$.

\begin{theorem}\label{thm43}
Let $a_1, a_2 , \ldots , a_{d-1}, a_d$ be real numbers with $a_d \ne
0$, and $b\in \N \setminus \{1\}$. Then the following are equivalent:
\begin{enumerate}
\item The solution $(x_n)$ of {\rm (\ref{eq3n1})} is $b$-Benford for
  almost all $(x_1, \ldots , x_d)\in \R^d$;
\item With the polynomial $p(z) = z^d - a_1 z^{d-1} - a_2 z^{d-2} - \ldots
  - a_{d-1}z - a_d$, the set
  $\{z\in \C : p(z) = 0\}_{EP}^N$ is $b$-nonresonant for some $N\in \N$.
\end{enumerate}
\end{theorem}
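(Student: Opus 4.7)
Let $A$ denote the companion matrix associated with (\ref{eq3n1}) via (\ref{eq3n2}). As in the proof of Theorem~\ref{thm380}, $\chi_A = (-1)^d p$, so $\sigma(A) = \cZ$. Because $A$ is non-derogatory (its minimal polynomial coincides with its characteristic polynomial), each eigenvalue $\lambda$ sits in a single Jordan block whose size equals the multiplicity of $\lambda$ as a root of $p$; comparing with the definition of $k(\lambda)$ preceding Theorem~\ref{thm360}, one obtains $\cZ_{EP} = \sigma_{EP}(A)$ and hence $\cZ_{EP}^N = \sigma_{EP}(A^N)$ for every $N \in \N$. Moreover, by (\ref{eq3n3}) the map $\Psi : (x_1, \ldots, x_d) \mapsto y = \sum_{j=1}^d x_{d+1-j}e^{(j)}$ is a linear bijection of $\R^d$, so (i) is equivalent to the statement that for almost every $y \in \R^d$, $\bigl((e^{(d)})^\top A^{n-1} y\bigr)$ is $b$-Benford.

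The plan is to reduce to Theorem~\ref{thm41}, exploiting the fact that $e^{(d)}$ is a cyclic vector for $A^\top$: from (\ref{eq3n2}), $(e^{(d)})^\top A^k = (e^{(d-k)})^\top$ for $0 \le k \le d-1$, so the $(A^\top)^k e^{(d)}$ form a basis of $\R^d$. Combined with $A$ non-derogatory (whence the commutant of $A$ equals $\C[A]$), this yields the key fact that every non-zero $C \in \C[A]$ satisfies $C^\top e^{(d)} \ne 0$. All the auxiliary matrices $C_{m,\lambda}$ and $B_\ell$ appearing in the proofs of the two implications of Theorem~\ref{thm41} are built out of spectral projections of $A$ (which commute with $A$ and thus lie in $\C[A]$), and so are themselves in $\C[A]$. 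Consequently, the slice at $x = e^{(d)}$ of any bilinear zero-set $\{(x,y) : x^\top C y = 0\}$ with non-zero $C \in \C[A]$ is a proper real-affine subspace $\{y \in \R^d : (C^\top e^{(d)})^\top y = 0\}$ (or the intersection of two such, if $C$ is complex), hence a Lebesgue nullset.

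For (ii) $\Rightarrow$ (i), assuming $\sigma_{EP}(A^N)$ is $b$-nonresonant, the proof of Theorem~\ref{thm41}(ii) $\Rightarrow$ (i) embeds $\R^d \times \R^d \setminus \B_b(A)$ into a finite union of such bilinear zero-sets, whose slice at $e^{(d)}$ is therefore a nullset in $\R^d$; thus (i) holds. For the contrapositive of (i) $\Rightarrow$ (ii), assume $\sigma_{EP}(A^N)$ is $b$-resonant for every $N$. In the $L_0 = 0$ case of the proof of Theorem~\ref{thm41}(i) $\Rightarrow$ (ii), one has $\B_b(A) \subset \bigcup_\ell \{(x,y) : x^\top B_\ell y = 0\}$ with $B_\ell \in \C[A] \setminus \{0\}$, so the slice at $e^{(d)}$ of the complement is a positive-measure set of ``bad'' $y$'s. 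In the $L_0 \ge 1$ case, the linear independence of the $L_0$ functionals $y \mapsto (e^{(d)})^\top C_{\lambda_\ell} y$ (a consequence of cyclicity combined with the linear independence of distinct spectral projections) lets one solve $(e^{(d)})^\top C_{\lambda_\ell} y = u_\ell$ for any $u \in \R^{L_0}$ with $x = e^{(d)}$ held fixed; choosing $u$ via Lemma~\ref{lem2Omega} to spoil uniform distribution mod $1$, and invoking continuity in $y$, produces a non-empty open set of bad $y$'s. Either way, (i) fails.

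The main obstacle is verifying the two structural inputs: that the auxiliary matrices $C_{m,\lambda}$ and $B_\ell$ inherited from the proof of Theorem~\ref{thm41} genuinely lie in $\C[A]$ (so that cyclicity of $e^{(d)}$ for $A^\top$ forces $C^\top e^{(d)} \ne 0$), and that in the $L_0 \ge 1$ subcase the $L_0$ functionals above are linearly independent. Both reduce to the non-derogatory nature of the companion matrix $A$, via the identification of its spectral projections as elements of $\C[A]$.
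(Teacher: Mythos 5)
Your proof is correct and follows the paper's intended route of reducing to Theorem~\ref{thm41}; the paper's own ``proof'' merely declares the argument ``completely analogous'' and leaves all details to the reader, so your contribution is to supply precisely the mechanism that makes the analogy work with $x = e^{(d)}$ held fixed. You correctly identify this mechanism: $e^{(d)}$ is a cyclic vector for $A^\top$ (via $(e^{(d)})^\top A^k = (e^{(d-k)})^\top$), the auxiliary matrices $C_{m,\lambda}$ and $B_\ell$ appearing in the proof of Theorem~\ref{thm41} lie in $\C[A]$ (being built from $A^m$ times spectral projections and nilpotent parts of $A^N$, all polynomials in $A$), and hence any non-zero $C \in \C[A]$ has $C^\top e^{(d)} \ne 0$, so the $x=e^{(d)}$ slices of the bilinear zero-sets $\{(x,y): x^\top C y=0\}$ are proper affine subspaces and hence nullsets. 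Two small imprecisions worth noting: (a) the spectral projections $P_\lambda$ lie in $\C[A]$ for \emph{every} $A$, not just non-derogatory ones, so the appeal to ``commutant $=\C[A]$'' is not actually needed for that step (though non-derogatory \emph{is} needed for $\cZ_{EP} = \sigma_{EP}(A)$); and (b) in the $L_0 \ge 1$ case, linear independence of the $\C$-valued functionals $y\mapsto (e^{(d)})^\top C_{\lambda_\ell}y$ over $\C$ does not by itself give surjectivity onto the real vector $(u_1,\dots,u_{L_0},0,\dots,0)$ for real $y$; one should restrict $y$ to the real generalized eigenspace $\bigoplus_{\ell\le L_0}(V_{\lambda_\ell}\oplus V_{\overline{\lambda_\ell}})\cap\R^d$, on which each $\phi_\ell$ factors $\C$-linearly through $V_{\lambda_\ell}$ and is non-zero (again by cyclicity), making the direct-sum map onto $\C^{L_0}$ surjective and the components at $\ell>L_0$ automatically zero. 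With these details filled in the argument is complete.
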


\begin{proof}
As seen in the proof of Theorem \ref{thm380}, for the matrix $A$
associated with (\ref{eq3n1}) via (\ref{eq3n2}), $\sigma(A) = \{z\in
\C : p(z)=0\}$, and in fact $\sigma_{EP}(A^n) = \{z\in \C:
p(z)=0\}_{EP}^n$ for every $n\in \N$. With this as well as
(\ref{eq3n3}) and (\ref{eq4t1p1}), the argument is completely
analogous to the proof of Theorem \ref{thm41}; details are left to the reader.
\end{proof}

\begin{example}
(i) For convenience let $b=10$ and consider the third-order equation
\begin{equation}\label{eq410}
x_n = 5 x_{n-1} - 11 x_{n-2} + 15 x_{n-3} \, , \quad \forall n \ge 4
\, .
\end{equation}
With the associated set
$$
\cZ = \{z\in \C : z^3 - 5z^2 + 11 z - 15 = 0\} = \{z\in \C : (z-3)(z^2
- 2z + 5) = 0\} \, ,
$$
clearly $\zeta = 3$, and $\cZ_{EP} = \{3\}$ is $b$-nonresonant. For
almost all $(x_1,x_2,x_3)\in \R^3$, therefore, the solution $(x_n)$ of
(\ref{eq410}) is $10$-Benford. In fact, since $\lim_{n\to \infty}
3^{-n} x_n = \frac1{24} (x_3 - 2 x_2 + 5x_1)$, the sequence $(x_n)$ is
$10$-Benford unless $x_3 = 2 x_2 - 5x_1$. Note that in the latter
case, $(x_n)$ solves the {\em second}-order equation $x_n = 2 x_{n-1}
- 5x_{n-2}$, i.e.\ (\ref{eqrec1}) with $\gamma = 1$, and as seen
in Example \ref{ex316b}, except for the trivial case of $x_n \equiv 0$ it is not known
whether $(x_n)$ is $10$-Benford.

(ii) The set $\cZ$ associated with the second-order equation
\begin{equation}\label{eq411}
x_n = \pi^{-2} x_{n-2} \, , \quad \forall n \ge 3 \, ,
\end{equation}
i.e.\ $\cZ = \{\pm \pi^{-1}\}$ is $b$-resonant for all $b\in \N
\setminus \{1\}$. However, with $\zeta = \pi^{-1}$, the set $\cZ^2 =
\cZ_{EP}^2= \{\pi ^{-2}\} $ is $b$-nonresonant. Hence the solution
$(x_n)$ of (\ref{eq411}) is Benford for almost all $(x_1, x_2)\in
\R^2$. Again, it is easy to check that in fact $(x_n)$ is Benford if
and only if $x_1 x_2 \ne 0$.

(iii) As a variant of (\ref{eq411}), consider the recursion
\begin{equation}\label{eq412}
x_n = (1 - \pi^{-2}) x_{n-1} + \pi^{-2} x_{n-2} \, , \quad \forall n
\ge 3 \, .
\end{equation}
Now $\cZ = \{-\pi^{-2}, 1\}$, hence $\zeta = 1$, and $\cZ_{EP}^n =
\{1\}$ is $b$-resonant for every $n\in \N$. By Theorem \ref{thm43},
the solution $(x_n)$ of (\ref{eq412}) is not Benford for almost all
$(x_1, x_2)\in \R^2$. In fact, $(x_n)$ can only be Benford if $x_1 +
\pi^2 x_2 = 0$, hence $\{(x_1,x_2)\in \R^2 : (x_n) \: \mbox{\rm is
  Benford}\}$ is a nullset.
\end{example}

\begin{rem}\label{rem44}
In light of the above examples, it may be conjectured that in the context of Theorem \ref{thm41},
$\B_b(A)$ is actually a nullset if $\sigma_{EP}(A^n)$ is
$b$-resonant for all $n\in \N$. Similarly, the solution $(x_n)$ of (\ref{eq3n1}) may for
almost all $(x_1, \ldots , x_d)\in \R^d$ not be $b$-Benford whenever $\{z\in
\C : p(z) = 0\}_{EP}^n$ is $b$-resonant for every $n$. Using Lemmas
\ref{lema6} and \ref{lema7}, it is not hard to verify this conjecture
for $d\in \{1,2,3\}$. However, the authors do not know of any proof of, or counter-example to
the conjecture for $d\ge 4$; cf.\ Remark \ref{rema9}(i).
\end{rem}

Clearly, if $\sigma(A^N)$ is $b$-nonresonant for some $N\in \N$ then so is
$\sigma_{EP}(A^N)$, and unless $A$ is nilpotent, this in turn implies that
$\log_b r_{\sigma}(A)$ is irrational. As the next result shows, even
the latter, seemingly much weaker condition alone suffices to recover a
strong form of Theorem \ref{thm34} --- provided that some power of $A$
is positive. Recall that $A\in \R^{d\times d}$ is {\em positive\/}
({\em nonnegative\/}), in symbols $A>0$ ($A\ge 0$), if $[A]_{jk}>0$
($[A]_{jk}\ge 0$) for all $j,k\in \{1, \ldots , d\}$; here $[A]_{jk}$
denotes the entry of $A$ at position $(j,k)$, i.e.\ in the $j$-th row
and $k$-th column, thus $[A]_{jk}= (e^{(j)})^{\top} A e^{(k)}$. For
convenience, write $x> 0$ ($x\ge 0$) for $x\in \R^d$ if $x_j > 0$ 
($x_j \ge 0$) for all $j\in \{1, \ldots , d\}$. A proof of the
following result can be found in \cite[Sec.3]{BE} for $b=10$, but the
argument given there immediately carries over to arbitrary base $b$.

\begin{prop}\label{prop45}
Let $A\in \R^{d\times d}$ and $b\in \N \setminus \{1\}$. Assume that
$A^N>0$ for some $N\in \N$. Then the following four statements are
equivalent:
\begin{enumerate}
\item For every $x,y\in \R^d \setminus \{0\}$ with $x\ge 0$, $y\ge 0$
  the sequence $(x^\top A^n y)$ is $b$-Benford;
\item For every $x\in \R^d\setminus \{0\}$ with $x\ge 0$ the sequence
  $(|A^nx|)$ is $b$-Benford;
\item The sequence $(|A^n|)$ is $b$-Benford;
\item $\log_b r_{\sigma}(A)$ is irrational.
\end{enumerate}
\end{prop}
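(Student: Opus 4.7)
The strategy is to apply the Perron--Frobenius theorem to the primitive matrix $B := A^N > 0$ and extract sharp asymptotics for the powers of $A$ along each arithmetic progression $n \mapsto nN+m$. Perron--Frobenius yields that $\rho := r_\sigma(B) = r_\sigma(A)^N > 0$ is a simple, strictly dominant eigenvalue of $B$ with strictly positive right and left eigenvectors $v, w \in \R^d$; normalizing $w^\top v = 1$ gives $\rho^{-n}B^n \to v w^\top$ geometrically fast. Since $A$ commutes with $B$, the vector $A v$ lies in the one-dimensional $\rho$-eigenspace of $B$, forcing $A v = \lambda v$ and $w^\top A = \lambda w^\top$ for some $\lambda \in \R$ with $\lambda^N = \rho$; thus $\lambda = \pm r$ where $r := r_\sigma(A) > 0$.

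Multiplying the Perron asymptotic by $A^m$, for each $m \in \{1,\ldots,N\}$ this gives
$$
A^{nN+m} = \rho^n \lambda^m v w^\top + O\bigl((\rho\theta)^n\bigr) \quad \mbox{for some fixed } 0 < \theta < 1.
$$
Substituting this expansion into the three sequences of interest yields, for $x,y \ge 0$ nonzero, respective leading terms $\rho^n \lambda^m (x^\top v)(w^\top y)$, $\rho^n r^m (w^\top x)|v|$, and $\rho^n r^m |v|\,|w|$ (the last using continuity of the spectral norm together with $|vw^\top| = |v||w|$). Strict positivity of $v,w$ guarantees $x^\top v > 0$ and $w^\top y > 0$, so every leading coefficient has nonzero modulus; hence along each residue class the base-$b$ logarithm of the absolute value equals $nN\log_b r + \gamma_m + o(1)$ for some real $\gamma_m$. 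By Lemma \ref{lem200}(iii) this sequence is u.d.\ mod $1$ iff $(nN\log_b r)$ is, i.e.\ iff $\log_b r$ is irrational; Lemma \ref{lem230} then glues together the $N$ residue classes to yield u.d.\ mod $1$ of the full sequence, and Proposition \ref{prop_dia} converts this into the $b$-Benford property. The converse is immediate: if $\log_b r \in \Q$ then each residue class modulo $1$ accumulates on a finite set, precluding u.d.\ mod $1$ and hence the Benford property. This establishes (i) $\Leftrightarrow$ (ii) $\Leftrightarrow$ (iii) $\Leftrightarrow$ (iv).

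The only genuine subtlety is ensuring that the leading coefficients in the three asymptotic expansions are genuinely nonzero for every admissible $x,y$, and this is precisely where the hypothesis $A^N > 0$ is indispensable: it forces both Perron eigenvectors $v,w$ of $B$ to be strictly positive, so $x^\top v$ and $w^\top y$ cannot vanish for nonnegative nonzero $x,y$. Without this strict positivity those pairings could vanish for particular choices of $x,y$, and the clean characterization ``Benford iff $\log_b r_\sigma(A)$ irrational'' would break down, as already illustrated by Theorem \ref{thm34} and Example \ref{exa314}.
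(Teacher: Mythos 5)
Your proof plan is correct. The paper does not itself give an argument for Proposition \ref{prop45}---it defers to the companion paper cited as [BE] (for $b=10$) and notes that the argument carries over to general $b$---so a direct comparison with the paper's own proof is not possible from the text provided. The Perron--Frobenius route you take is the natural one given the primitivity hypothesis $A^N>0$, and all the ingredients are in place: the spectral-gap asymptotic $\rho^{-n}B^n\to vw^\top$ with geometric rate, the commutation argument giving $Av=\lambda v$ and $w^\top A=\lambda w^\top$ with $|\lambda|=r_\sigma(A)$, strict positivity of $v,w$ ruling out vanishing leading coefficients for admissible $x,y$, Lemma \ref{lem230} to glue the $N$ residue classes, and Lemma \ref{lem200}(iii) together with Proposition \ref{prop_dia} for the reduction to irrationality of $\log_b r_\sigma(A)$, with the converse handled by noting that a rational $\log_b r_\sigma(A)$ forces the logarithms to accumulate modulo one on a finite set. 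One small detail worth spelling out when writing this up: that the same scalar $\lambda$ serves in both $Av=\lambda v$ and $w^\top A=\lambda w^\top$ follows by evaluating $w^\top A v$ in two ways and using $w^\top v\ne 0$. Apart from that the plan is sound.
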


\begin{example}\label{ex46}
(i) For the matrix
$$
A = \left[
\begin{array}{ccc}
0 & 1 & 0 \\ 0 & 0 & 1 \\ 6 & 1 & 0 
\end{array}
\right] \, ,
$$
one finds $\sigma (A) = \{-1 \pm \imath \sqrt{2}, 2\}$, and so $\sigma
(A)$ is $b$-resonant whenever $b\in \{2^n, 3^n : n \in \N\}$. For any
other base $b$, and similarly to Example \ref{ex316b}(iii), it is
apparently unknown whether $\sigma (A)$ is $b$-resonant. Note,
however, that $A\ge 0$ and $A^5>0$, hence Proposition \ref{prop45}
applies with $r_{\sigma}(A)=2$. For every $b$ not an integer power of $2$, therefore, and for all $x,
y \in \R^3 \setminus \{0\}$ with $x,y\ge 0$, the sequences $(x^\top
A^n y)$ and $(|A^n x|)$ are $b$-Benford. This nicely complements the
fact that $(x^\top A^n y)$ and $(|A^n x|)$ are $b$-Benford in this
case for almost all $(x,y)\in \R^3 \times \R^3$ (by Theorem
\ref{thm41}) and almost all $x\in \R^3$ (by Remark \ref{rem42a}),
respectively. Also, $(|A^n|)$ is $b$-Benford by Theorem \ref{thm360}.

(ii) For the matrix
$$
B = \left[
\begin{array}{rcc}
-3 & 1 & 0 \\ 1 & 0 & 1 \\ 0 & 1 & 6 
\end{array}
\right] \, ,
$$
it is easily checked that $B^8 > 0$. An argument similar to, but
simpler than the one in Example \ref{ex39}(ii) shows that
$\log_b r_{\sigma}(B)$ is irrational for every $b$. Hence again Proposition
\ref{prop45} applies. Note that in order to reach this conclusion it is not necessary to explicitly
determine the value of $r_{\sigma}(B)$.
\end{example}

\begin{rem}\label{rem47}
For nonnegative $A\in \R^{d\times d}$, it is well-known that $A^N>0$
for some $N\in \N$ (if and) only if $A^{d^2 - 2d + 2} >0$; see e.g.\ \cite[Prop.8.5]{HJ}. On the
other hand, for $d\ge 3$ and {\em arbitrary\/} $A\in \R^{d\times d}$, the
minmal number $N$ for which $A^N>0$, if at all existant, may be
arbitrarily large; see \cite[Sec.3]{BE}.
\end{rem}

Proposition \ref{prop45} has a counterpart for difference
equations which is a variant of Theorem \ref{thm380} under the
assumption of {\em positivity}, both for the coefficients and the
initial data; for a proof the reader is again referred to \cite{BE}.

\begin{prop}\label{prop46a}
Let $a_1, a_2, \ldots , a_{d-1}, a_d$ be positive real numbers, and
$b\in \N \setminus \{1\}$. Then the following are equivalent:
\begin{enumerate}
\item Every solution $(x_n)$ of {\rm (\ref{eq3n1})} with $x_1, \ldots
  , x_d \ge 0$ and $\max_{j=1}^d x_j > 0$ is $b$-Benford;
\item $\log_b \zeta$ is irrational where $z=\zeta$ is the right-most
  root of $p(z)=0$ with the polynomial $p(z) = z^d - a_{1}z^{d-1} -
  a_2 z^{d-2}  - \ldots - a_{d-1}z - a_d$.
\end{enumerate}
\end{prop}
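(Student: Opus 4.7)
The plan is to reduce Proposition \ref{prop46a} to a direct Perron--Frobenius computation on the companion matrix $A$ associated with (\ref{eq3n1}) via (\ref{eq3n2}). Since every $a_i$ is strictly positive, $A\ge 0$ entrywise; the entry $[A]_{11}=a_1>0$ provides a loop at vertex $1$ in the directed graph of $A$, the sub-diagonal $1$'s give the chain $1\to 2\to \cdots \to d$, and the positive entries $a_2,\ldots,a_d$ give back-edges $j\to 1$ for $2\le j \le d$. Hence the graph is strongly connected and aperiodic, so $A$ is primitive, i.e., $A^N>0$ for some $N\in \N$. Since $\chi_A(z)=(-1)^d p(z)$, Perron--Frobenius then identifies $r_\sigma(A)$ as a simple positive eigenvalue strictly dominating every other eigenvalue of $A$ in modulus; this eigenvalue is necessarily $\zeta$, the right-most root of $p$, and the corresponding right and left Perron eigenvectors $u,v\in \R^d$ can be chosen so that $u,v>0$ and $v^\top u=1$.

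Next, I would use (\ref{eq3n3}) to write the solution as $x_n = (e^{(d)})^\top A^{n-1} y$ with $y:=\sum_{j=1}^d x_{d+1-j}e^{(j)}$. Under the hypothesis on the initial data, $y\ge 0$ and $y\ne 0$. Perron--Frobenius then gives $\zeta^{-n}A^n = uv^\top + R_n$ with $|R_n|\to 0$ geometrically, so
$$
x_n = \zeta^{n-1}\, c_y + o(\zeta^{n-1}), \qquad c_y := \bigl((e^{(d)})^\top u\bigr)\bigl(v^\top y\bigr) > 0,
$$
the strict positivity of $c_y$ following from $u,v>0$, $e^{(d)}\ne 0$, and $y\ge 0$, $y\ne 0$. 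Taking $\log_b$,
$$
\log_b|x_n| = (n-1)\log_b \zeta + \log_b c_y + o(1).
$$

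To conclude, recall that $(n\vartheta)$ is u.d.\ mod $1$ precisely when $\vartheta\in \R\setminus \Q$, so $((n-1)\log_b \zeta + \log_b c_y)$ is u.d.\ mod $1$ iff $\log_b \zeta$ is irrational. Since the $o(1)$ perturbation above is convergent, Lemma \ref{lem200}(iii) transfers this equivalence to $(\log_b |x_n|)$: the latter sequence is u.d.\ mod $1$ iff $\log_b \zeta \notin \Q$. Proposition \ref{prop_dia} then yields the desired equivalence (i)$\Leftrightarrow$(ii). I expect the main obstacle to be the graph-theoretic verification of primitivity of $A$, as this is what unlocks the sharp Perron--Frobenius asymptotic with a strictly positive leading coefficient $c_y$; once that is in place, the remaining uniform-distribution argument is standard via the tools of Section \ref{sec2}.
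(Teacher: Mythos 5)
Your proof is correct. The paper defers the proof of Proposition~\ref{prop46a} to \cite{BE}, describing it only as a positivity counterpart of Theorem~\ref{thm380}; your Perron--Frobenius argument --- verifying primitivity of the companion matrix $A$ from $a_1,\ldots,a_d>0$, identifying $\zeta=r_{\sigma}(A)$ as the simple dominant eigenvalue with strictly positive Perron eigenvectors, and then reducing to the u.d.\ mod~$1$ characterization via Lemma~\ref{lem200} and Proposition~\ref{prop_dia} --- is exactly the natural route and is consistent with the setup of Proposition~\ref{prop45} (which also requires $A^N>0$ and whose proof in \cite{BE} rests on Perron--Frobenius). One remark: once primitivity is in hand, the implication (ii)$\Rightarrow$(i) could also be obtained directly from Proposition~\ref{prop45} with $x=e^{(d)}$ and $y$ the nonnegative nonzero vector of reversed initial data; the sharp asymptotic you derive is nevertheless what drives the converse (i)$\Rightarrow$(ii), since it shows that rationality of $\log_b \zeta$ forces \emph{every} admissible solution to fail to be $b$-Benford, not merely some.
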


To finally put Theorems \ref{thm34} and \ref{thm41} as well as
Corollary \ref{cor370} in perspective, recall that, informally put,
$b$-Benford sequences are prevalent among the sequences $(x^\top A^n
y)$, $(|A^n x|)$, and $(|A^n|)$ derived from $(A^n)$ whenever $\sigma (A^N)$
is $b$-nonresonant for some $N\in \N$. For {\em most\/} matrices $A\in \R^{d\times d}$
the set $\sigma (A)$ is $b$-nonresonant for every $b$, as are
$\sigma (A^n)$ and $\sigma_{EP}(A^n)$ for all $n\in \N$, and $\log_b
r_{\sigma}(A)$ is irrational. More formally, let
$$
\cG_{d,b}:= \bigl\{ A\in \R^{d\times d} : A \: \mbox{\rm is invertible and
  $\sigma (A)$ is $b$-nonresonant}\, \bigr\} \, .
$$
With this, it can be shown that while the set $\R^{d\times d}\setminus
\cG_{d,b}$ is dense in $\R^{d\times d}$, it nevertheless is a
first-category set (i.e.\ a countable union of nowhere dense sets) and
has (Lebesgue) measure zero. The same, therefore, is true for
$\bigcup\nolimits_{b\in \N \setminus \{1\}} (\R^{d\times d} \setminus
\cG_{d,b})$. In other words, most real $d\times d$-matrices, both
in a topological and measure-theoretical sense, belong to
$\bigcap_{b\in \N \setminus \{1\}} \cG_{d,b}$, and thus are invertible
with their spectrum $b$-nonresonant for every $b$; see e.g.\ \cite{BDCDSA,
  BHKR,BHPS} for details. This observation may help explain the
conformance to BL often observed empirically across a wide range of
scientific disciplines.

\subsubsection*{Acknowledgements}

The authors have been supported by an {\sc Nserc} Discovery
Grant. They like to thank T.P.\ Hill, B.\ Schmuland, M.\ Waldschmidt, A.\ Weiss and
R.\ Zweim\"{u}ller for helpful discussions and comments.

\begin{appendix}

\section{Some auxiliary results}

\setcounter{equation}{0}
\renewcommand{\theequation}{A.\arabic{equation}}

The purpose of this appendix is to provide proofs for several
analytical facts that have been used in establishing the main results
of this article. Throughout, let $d$ be a fixed positive integer.

\begin{lem}\label{lem34b}
Given any $z_1, \ldots , z_d \in \mS = \{z\in \C : |z| =
1\}$, the following are equivalent:
\begin{enumerate}
\item If $c_1, \ldots , c_d \in \C$ and $\lim_{n\to \infty} (c_1 z_1^n + \ldots + c_d z_d^n)$
  exists then $c_1 = \ldots = c_d = 0$;
\item $z_j \not \in \{1\}\cup \{z_k : k\ne j\}$ for every $1\le j \le d$.
\end{enumerate}
\end{lem}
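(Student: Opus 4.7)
The plan is to prove (i)$\Rightarrow$(ii) by contrapositive using two explicit counterexamples, and (ii)$\Rightarrow$(i) by a Cesàro-averaging argument that isolates each coefficient $c_j$.

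For the implication (i)$\Rightarrow$(ii), suppose (ii) fails. Two possibilities arise. First, if $z_j = 1$ for some index $j$, then setting $c_j = 1$ and all other $c_k = 0$ produces the constant sequence $a_n \equiv 1$, which converges while $c_j \ne 0$, contradicting (i). Second, if $z_j = z_k$ for some $j \ne k$, then setting $c_j = 1$, $c_k = -1$, and all remaining coefficients to zero yields $a_n \equiv 0$, which trivially converges while not all coefficients vanish, again contradicting (i).

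For the implication (ii)$\Rightarrow$(i), assume the $z_j$ are pairwise distinct and none equals $1$, and suppose $a_n := \sum_{j=1}^d c_j z_j^n$ converges to some $L \in \C$. Fix $j \in \{1, \ldots, d\}$ and consider the Cesàro average
\[
\frac{1}{N} \sum\nolimits_{n=1}^N a_n z_j^{-n} = \sum\nolimits_{k=1}^d c_k \cdot \frac{1}{N} \sum\nolimits_{n=1}^N (z_k z_j^{-1})^n.
\]
On the right-hand side, the $k=j$ term equals $c_j$ for every $N$, while for $k \ne j$ the ratio $z_k z_j^{-1}$ lies on $\mS \setminus \{1\}$ (using distinctness), so the partial sum $\sum_{n=1}^N (z_k z_j^{-1})^n$ is bounded and its Cesàro average tends to $0$. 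The left-hand side can be split as $\frac{1}{N} \sum_{n=1}^N (a_n - L) z_j^{-n} + L \cdot \frac{1}{N} \sum_{n=1}^N z_j^{-n}$; the first summand vanishes as $N \to \infty$ by a routine Cesàro estimate (since $a_n - L \to 0$ and $|z_j^{-n}| = 1$), and the second vanishes because $z_j \ne 1$ makes the partial sums bounded. Equating limits yields $c_j = 0$, and since $j$ was arbitrary, all $c_j$ vanish.

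The argument is entirely elementary; the only point that deserves care is the standard fact that if $(b_n)$ is a bounded sequence in $\C$ and $\varepsilon_n \to 0$, then $\frac{1}{N} \sum_{n=1}^N \varepsilon_n b_n \to 0$, which handles the $(a_n - L) z_j^{-n}$ term. Hypothesis (ii) is used precisely to ensure $z_j \ne 1$ (so the geometric average of $z_j^{-n}$ vanishes) and $z_k \ne z_j$ for $k \ne j$ (so the cross-terms vanish); these are exactly the two conditions built into (ii), and each is shown necessary by the counterexamples in the first paragraph.
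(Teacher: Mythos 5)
Your proof is correct, and the argument for (ii)$\Rightarrow$(i) is genuinely different from the one in the paper. The paper proves this implication by induction on $d$: after forming the difference $a_{n+1}-a_n \to 0$ and factoring out $z_{d+1}^n(z_{d+1}-1)$ (non-vanishing since $z_{d+1}\ne 1$), the $(d+1)$-term sum is reduced to a $d$-term sum in the ratios $z_j/z_{d+1}$, to which the induction hypothesis applies; the hypothesis $z_j\ne 1$ is then needed a second time to cancel the factors $(z_j-1)/(z_{d+1}-1)$ and conclude $c_j=0$. You instead isolate each $c_j$ in one pass by a Ces\`aro-averaging (Fourier--orthogonality) argument, exploiting that the partial geometric sums of $(z_kz_j^{-1})^n$ and of $z_j^{-n}$ are bounded whenever the ratio is a unimodular number different from $1$, together with the standard fact that $\frac1N\sum_{n\le N}\varepsilon_n b_n\to 0$ when $\varepsilon_n\to 0$ and $(b_n)$ is bounded. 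Both uses of hypothesis (ii) — distinctness and $z_j\ne 1$ — appear in your argument exactly where they are needed, and the (i)$\Rightarrow$(ii) counterexamples match the paper's. Your route avoids induction entirely, treats all indices symmetrically, and is arguably cleaner; the paper's telescoping trick is slicker once seen but less transparent about where the two parts of hypothesis (ii) enter.
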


\begin{proof}
Clearly (i)$\Rightarrow$(ii) because if $z_j = 1$ for some $j$ simply let
$c_j = 1$ and $c_{\ell}=0$ for all $\ell \ne j$, whereas if $z_j =
z_k$ for some $j\ne k$ take $c_j = 1$, $c_k = -1$, and $c_{\ell} = 0$
for all $\ell \in \{1, \ldots , d\}\setminus \{j,k\}$. To show that
(ii)$\Rightarrow$(i) as well, proceed by induction. Trivially, if
$d=1$ then $(c_1 z_1^n)$ with $z_1\in \mS$ converges only if $c_1=0$
or $z_1 = 1$. Assume now that (ii)$\Rightarrow$(i) has been
established already for some $d\in \N$, let $z_1, \ldots, z_{d+1}\in \mS$, and assume that
$z_j \not \in \{1\}\cup \{z_k : k \ne j\}$ for every $1\le j \le
d+1$. If $\lim_{n\to \infty} (c_1 z_1^n + \ldots + c_{d+1} z_{d+1}^n)$
exists then, as $z_{d+1}\ne 1$,
\begin{align*}
& \left\{ 
c_1 \left( \frac{z_1}{z_{d+1}}\right)^n  \frac{z_1 - 1}{z_{d+1} -1 } +
\ldots +
c_d \left( \frac{z_d}{z_{d+1}}\right)^n \frac{z_d - 1}{z_{d+1} -1 }
+ c_{d+1}
\right\} z_{d+1}^n (z_{d+1} - 1) \\[2mm]
& \quad \qquad  = c_1 z_1^n (z_1 - 1) + \ldots + c_{d+1} z_{d+1}^n (z_{d+1} - 1) \\[2mm]
& \quad \qquad  = c_1 z_1^{n+1}  + \ldots + c_{d+1} z_{d+1}^{n+1}
-
\bigl( c_1 z_1^n + \ldots + c_{d+1} z_{d+1}^n \bigr) 
\enspace \stackrel{n\to \infty}{\longrightarrow} \enspace 0 \, ,
\end{align*}
which in turn yields
$$
\lim\nolimits_{n\to \infty} \left\{ 
c_1 \frac{z_1 - 1}{z_{d+1} -1 }  \left( \frac{z_1}{z_{d+1}}\right)^n +
\ldots +
c_d \frac{z_d - 1}{z_{d+1} -1 } \left( \frac{z_d}{z_{d+1}}\right)^n 
\right\} = -c_{d+1} \, .
$$
Note that $\displaystyle \frac{z_j}{z_{d+1}} \not \in \{1\}\cup \left\{
\frac{z_k}{z_{d+1}} : k \ne j\right\}$ for every $1\le j \le d$. By
the induction assumption, $\displaystyle c_j \frac{z_j-1}{z_{d+1} - 1} = 0$ for all
$1\le j \le d$. Hence $c_1 = \ldots = c_{d}=0$, and clearly
$c_{d+1} = 0$ as well.
\end{proof}

Two simple consequences of Lemma \ref{lem34b} have been used
repeatedly.

\begin{lem}\label{cor34c}
Let $0 = t_0 < t_1 < \ldots < t_{d} < t_{d+1} = \pi$ and $c_0, c_1
\ldots , c_{d}, c_{d+1}\in \C$. If 
$$
\lim\nolimits_{n\to \infty} \Re (c_0
e^{\imath n t_0} + c_1 e^{\imath n t_1} + \ldots + c_d e^{\imath n
  t_d} + c_{d+1} e^{\imath n t_{d+1}}) = 0\, ,
$$
then $\Re c_0 = \Re c_{d+1} = 0$ and $c_1 = \ldots = c_d = 0$.
\end{lem}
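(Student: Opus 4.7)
The plan is to reduce the statement to Lemma \ref{lem34b} by writing real parts in terms of complex conjugate pairs. Using $\Re(c e^{\imath nt}) = \tfrac{1}{2}(c e^{\imath nt} + \overline{c} e^{-\imath nt})$, the fact that $e^{\imath nt_0} = 1$ and $e^{\imath nt_{d+1}} = e^{-\imath nt_{d+1}} = (-1)^n$, the hypothesis reads
\begin{equation*}
\Re c_0 \;+\; \Re c_{d+1} \cdot (-1)^n \;+\; \tfrac{1}{2}\sum_{j=1}^d \bigl( c_j e^{\imath n t_j} + \overline{c_j} e^{-\imath n t_j}\bigr) \; \stackrel{n\to\infty}{\longrightarrow}\; 0 \,.
\end{equation*}
Isolating the constant $\Re c_0$, the remainder is a linear combination of $n$-th powers of the $2d+1$ points $z = -1$ and $z = e^{\pm \imath t_j}$, $j=1,\ldots,d$, all lying on $\mS$.

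The next step is to verify that these $2d+1$ points are pairwise distinct and all different from $1$, so that Lemma \ref{lem34b}(ii) is satisfied. Since $0 < t_j < \pi$, each $e^{\imath t_j}$ has strictly positive imaginary part while each $e^{-\imath t_j}$ has strictly negative imaginary part, and neither can equal the real numbers $\pm 1$. The strict ordering $t_1 < \cdots < t_d$ makes the $e^{\imath t_j}$ (and likewise the $e^{-\imath t_j}$) mutually distinct, and $e^{\imath t_j} = e^{-\imath t_k}$ would force $t_j + t_k \in 2\pi\Z$, impossible since $t_j+t_k \in (0,2\pi)$. Hence the listed $2d+1$ points satisfy the equivalent condition from Lemma \ref{lem34b}.

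To conclude, since $\Re c_0$ is constant and the full expression above converges to $0$, the linear combination
\begin{equation*}
S_n \;:=\; \tfrac{1}{2}\sum_{j=1}^d \bigl( c_j e^{\imath n t_j} + \overline{c_j} e^{-\imath n t_j}\bigr) + \Re c_{d+1}\cdot (-1)^n
\end{equation*}
converges (to $-\Re c_0$). Lemma \ref{lem34b} applied to $S_n$ forces every coefficient to vanish: $c_j/2 = \overline{c_j}/2 = 0$ for $1 \le j \le d$, i.e.\ $c_1 = \cdots = c_d = 0$, and $\Re c_{d+1} = 0$. Then $S_n \equiv 0$, so its limit $-\Re c_0$ is zero as well, giving $\Re c_0 = 0$. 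There is no genuine obstacle here; the only care needed is the bookkeeping to ensure all $2d+1$ exponentials are accounted for and pairwise distinct.
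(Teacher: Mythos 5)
Your proof is correct and follows essentially the same route as the paper: both rewrite each $\Re(c_j e^{\imath n t_j})$ for $1\le j\le d$ as $\tfrac12(c_j e^{\imath n t_j}+\overline{c_j}e^{-\imath n t_j})$, note $e^{\imath n t_0}=1$ and $e^{\imath n t_{d+1}}=(-1)^n$, and apply Lemma~\ref{lem34b} to the resulting convergent linear combination of $n$-th powers of the $2d+1$ pairwise-distinct points $-1,\,e^{\pm\imath t_1},\ldots,e^{\pm\imath t_d}\in\mS\setminus\{1\}$. The bookkeeping differs only cosmetically (you isolate the constant $\Re c_0$ first and deduce it is zero at the end, whereas the paper subtracts $2\Re c_0$ at the start), so there is no substantive difference.
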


\begin{proof}
For every $j\in \{1, \ldots , 2d+1\}$ let
$$
z_j = \left\{
\begin{array}{lll}
e^{\imath t_j} & & \mbox{\rm if } 1 \le j \le d+1 \, , \\[1mm]
e^{-\imath t_{2d+2 - j}} & & \mbox{\rm if } d+2 \le j \le 2d+1 \, ,
\end{array}
\right.
$$
and note that $z_j \not \in \{1\} \cup \{z_k : k\ne j\}$. Since
\begin{align*}
2 \lim\nolimits_{n\to \infty} \Re & \left(
c_0 e^{\imath n t_0}  + c_1 e^{\imath n t_1} + \ldots + c_d e^{\imath n
t_d} + c_{d+1} e^{\imath n t_{d+1}}
\right)  - 2 \Re c_0  \\
& = 2  \lim\nolimits_{n\to \infty} \Re \left( c_1 e^{\imath n t_1} + \ldots + c_d e^{\imath n
t_d} + c_{d+1} e^{\imath n t_{d+1}} \right) \\
& = \lim\nolimits_{n\to \infty} \left(
\sum\nolimits_{j=1}^d c_j z_j^n + 2 (\Re c_{d+1} ) z_{d+1}^n +
\sum\nolimits_{j=d+2}^{2d+1} \overline{c_{2d+2 -j}} z_j^n
\right) 
\end{align*}
exists by assumption, Lemma \ref{lem34b} shows that $c_1 = \ldots =
c_d = 0$ and $\Re c_{d+1}=0$, and so clearly $\Re c_0 = 0$ as well.
\end{proof}

\begin{lem}\label{lem34c}
Given any $z_1, \ldots , z_d \in
\mS$, the following are equivalent:
\begin{enumerate}
\item If $c_1, \ldots , c_d \in \C$ and $\lim_{n\to \infty} \Re(c_1 z_1^n + \ldots + c_d z_d^n)$
  exists then $c_1 = \ldots = c_d =
  0$;
\item $z_j \not \in \{-1, 1\}\cup \{z_k, \overline{z_k} : k\ne j\}$ for every $1\le j \le d$.
\end{enumerate}
\end{lem}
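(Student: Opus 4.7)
The plan is to prove both directions, with the reverse implication reduced cleanly to Lemma \ref{lem34b} via a symmetrization trick.

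For (i)$\Rightarrow$(ii), I would argue the contrapositive by exhibiting, in each way (ii) can fail, a non-trivial choice of coefficients for which the real part of the combination converges. Specifically: if $z_j = 1$ or $z_j = -1$ for some $j$, take $c_j = \imath$ and all other coefficients zero, so that $\Re(c_j z_j^n) = 0$ for every $n$. If $z_j = z_k$ for some $j \ne k$, take $c_j = 1$, $c_k = -1$, and all others zero, so that $c_j z_j^n + c_k z_k^n = 0$. If $z_j = \overline{z_k}$ for some $j \ne k$, take $c_j = 1$, $c_k = -1$, so that $c_j z_j^n + c_k z_k^n = z_j^n - \overline{z_j}^{\,n} = 2\imath \Im(z_j^n)$, whose real part is identically $0$. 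In every case the limit exists (and equals $0$) while the coefficients are not all zero, contradicting (i).

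For (ii)$\Rightarrow$(i), the key observation is that $2\Re(w) = w + \overline{w}$ converts the statement about real parts into a statement about ordinary limits on a doubled list. Assume (ii) and suppose $\lim_{n\to\infty}\Re\bigl(\sum_{j=1}^d c_j z_j^n\bigr)$ exists. Then
$$
\lim_{n\to\infty}\Bigl(\sum_{j=1}^d c_j z_j^n \;+\; \sum_{j=1}^d \overline{c_j}\,\overline{z_j}^{\,n}\Bigr)
$$
exists as well. I plan to apply Lemma \ref{lem34b} to the list of $2d$ numbers $w_1 = z_1, \ldots, w_d = z_d,\, w_{d+1} = \overline{z_1}, \ldots, w_{2d} = \overline{z_d}$ in $\mS$ with associated coefficients $c_1, \ldots, c_d, \overline{c_1}, \ldots, \overline{c_d}$. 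The conclusion $c_1 = \ldots = c_d = 0$ then follows immediately.

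The one piece of real content is verifying the hypothesis $w_j \notin \{1\} \cup \{w_k : k \ne j\}$ for each $j$, which is a short case check: $z_j \ne 1$ directly from (ii); $\overline{z_j} \ne 1$ because $z_j \ne 1$; $z_j \ne z_k$ and $\overline{z_j} \ne \overline{z_k}$ for $j \ne k$ from (ii); $z_j \ne \overline{z_k}$ for $j \ne k$ from (ii); and finally $z_j \ne \overline{z_j}$, which amounts to $z_j \notin \R$, i.e.\ $z_j \ne \pm 1$. This last case is the subtle one, and it is precisely where the condition $z_j \ne -1$ in (ii) is used (the condition $z_j \ne 1$ alone would not exclude $z_j = -1 = \overline{z_j}$). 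Once this distinctness check is in place, Lemma \ref{lem34b} applies and completes the proof.
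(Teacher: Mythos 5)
Your proof is correct and follows essentially the same route as the paper: both directions are handled identically, with (i)$\Rightarrow$(ii) by exhibiting explicit non-trivial coefficients, and (ii)$\Rightarrow$(i) via the symmetrization $2\Re(w) = w + \overline{w}$ followed by an application of Lemma \ref{lem34b} to the doubled list $z_1, \ldots, z_d, \overline{z_1}, \ldots, \overline{z_d}$. Your explicit verification of the distinctness hypothesis (in particular noting that $z_j \ne \overline{z_j}$ is where $z_j \ne -1$ is used) is a nice touch but matches the paper's reasoning in substance.
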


\begin{proof}
Clearly (i)$\Rightarrow$(ii) because if $z_j\in \{-1,1\}$ for some $1\le j
\le d$ simply let $c_j = \imath$ and $c_{\ell} = 0$ for all $\ell \ne j$,
whereas if $z_j \in \{z_k , \overline{z_k}\}$ for some $j\ne k$, take
$c_j = 1$, $c_k = -1$, and $c_{\ell} = 0$ for all $\ell \in \{1,
\ldots , d\}\setminus \{j,k\}$.
Conversely, if
$$
\lim\nolimits_{n\to \infty} \Re(c_1 z_1^n + \ldots + c_d z_d^n) =
{\textstyle \frac12}
\lim\nolimits_{n\to \infty} (c_1 z_1^n + \overline{c_1} \, 
\overline{z_1}^n + \ldots + c_d z_d^n + \overline{c_d} \, \overline{z_d}^n)
$$
exists then, by Lemma \ref{lem34b}, $c_1 = \ldots = c_d = 0$ unless
either $z_j = 1$ or $z_j = \overline{z_j}$ (and hence $z_j \in
\{-1,1\}$) for some $j$, or else $z_j \in \{z_k , \overline{z_k}\}$ for
some $j\ne k$. Overall, $c_1 = \ldots = c_d=0$ unless $z_j \in
\{-1,1,z_k , \overline{z_k}\}$ for some $j\ne k$. Thus
(ii)$\Rightarrow$(i), as claimed.
\end{proof}

Let $\vartheta_1, \ldots ,
\vartheta_d$ and $\beta\ne 0$ be real numbers, and $p_1, \ldots ,
p_d$ integers. With
these ingredients, consider the sequence
$(x_n)$ of real numbers given by
\begin{equation}\label{ea1}
x_n = p_1 n \vartheta_1 + \ldots + p_d n \vartheta_d + \beta \ln
\big|u_1 \cos (2\pi n \vartheta_1) + \ldots + u_d \cos (2\pi n 
\vartheta_d) \big| \, , \quad \forall n \in \N \, ,
\end{equation}
where $u\in \R^d$. Recall that Lemma \ref{lem2Omega}, which has been
instrumental in the proof of Theorem \ref{thm34}, asserts that it
is possible to choose $u\in \R^d$ in such a way that $(x_n)$ is {\em
  not\/} u.d.\ mod $1$ whenever the $d+1$ numbers $1,\vartheta_1, \ldots , \vartheta_d$
are $\Q$-independent. The remainder of this appendix is devoted to
providing a rigorous proof of Lemma \ref{lem2Omega}.

To prepare for the argument, recall that $\T^d$ denotes the $d$-dimensional
torus $\R^d/\Z^d$, together with the $\sigma$-algebra $\cB(\T^d)$ of
its Borel sets. Let $\cP(\T^d)$ be the set of all probability
measures on $\bigl( \T^d , \cB (\T^d)\bigr)$, and given any $\mu \in
\cP(\T^d)$, associate with it the family $\bigl( \widehat{\mu}
(k)\bigr)_{k\in \Z^d}$ of its {\em Fourier coefficients}, defined as
$$
\widehat{\mu} (k) = \int_{\T^d} e^{2\pi \imath k^{\top} t} \, {\rm d} \mu (t)
=
\int_{\T^d} e^{2\pi \imath (k_1 t_1 + \ldots + k_d t_d)} \,  {\rm d} \mu (t_1,
\ldots , t_d) \, , \quad \forall k\in \Z^d \, .
$$
Recall that $\mu \mapsto \bigl( \widehat{\mu}
(k)\bigr)_{k\in \Z^d}$ is one-to-one, i.e., the Fourier coefficients
determine $\mu$ uniquely. Arguably the most prominent element in
$\cP(\T^d)$ is the Haar measure $\lambda_{\T^d}$
for which, with ${\rm d}\lambda_{\T^d}(t)$ abbreviated ${\rm d}t$ as usual,
$$
\widehat{\lambda_{\T^d}}(k) = \int_{\T^d} e^{2\pi \imath (k_1 t_1 + \ldots
  + k_d t_d)} \, {\rm d}t = \prod\nolimits_{j=1}^d \int_{\T} e^{2\pi
  \imath k_j t} {\rm d}t = \left\{
\begin{array}{cll}
1 & & \mbox{if }k=0 \in \Z^d \, , \\
 0 & & \mbox{if } k \ne  0 \, .
\end{array}
\right.
$$
Given $\mu \in \cP (\T^d)$, therefore, to show that $\mu \ne
\lambda_{\T^d}$ it is (necessary and) sufficient to find at least one
$k \in \Z^d \setminus \{0\}$ for which $\widehat{\mu} (k)\ne 0$.
Recall also that, given any (Borel) measurable map $T:\T^d \to \T$,
each $\mu \in \cP (\T^d)$ induces a unique $\mu
\circ T^{-1}\in \cP (\T)$, via
$$
\mu \circ T^{-1} (B) = \mu \bigl( T^{-1}(B)\bigr) \, , \quad \forall B \in
\cB(\T) \, .
$$
Note that the Fourier coefficients of $\mu \circ T^{-1}$ are simply
$$
\widehat{ \mu\circ T^{-1}}(k) = \int_{\T} e^{2\pi \imath k t} {\rm
  d}(\mu \circ T^{-1}) (t) = \int_{\T^d} e^{2\pi \imath k T(t)} {\rm
  d}\mu (t) \, , \quad k \in \Z \, .
$$
If in particular $d=1$ and $\mu \circ T^{-1} = \mu$ then $\mu$ is said
to be $T$-{\em invariant\/} (and $T$ is $\mu$-{\em
  preserving\/}).

With a view towards Lemma \ref{lem2Omega},
for any $p_1, \ldots , p_d \in \Z$ and $\beta \in
\R$ consider the map
\begin{equation}\label{ea2}
\Lambda_u : \left\{
\begin{array}{ccl}
\T^d & \to & \T \, , \\[1mm]
t  & \mapsto & \bigl\langle p_1 t_1 + \ldots + p_d t_d + \beta \ln \big|
u_1 \cos (2\pi t_1) + \ldots + u_d \cos (2\pi t_d)\big| \bigr\rangle \, ;
\end{array}
\right.
\end{equation}
here $u \in \R^d$ may be thought of as a parameter. (Recall the
convention, adhered to throughout, that $\ln 0 = 0$.) Note that
each map $\Lambda_u$ is (Borel) measurable, in fact differentiable
outside a set of $\lambda_{\T^d}$-measure zero. For
every $\mu \in \cP(\T^d)$, therefore, the measure $\mu \circ \Lambda_u^{-1}$ is a
well-defined element of $\cP(\T)$.
Lemma \ref{lem2Omega} is a consequence of the following fact which
may also be of independent interest.

\begin{theorem}\label{lemx}
For every $p_1, \ldots , p_d \in \Z$ and $\beta \in \R \setminus \{0\}$,
there exists $u\in \R^d$ such that $\lambda_{\T^d} \circ \Lambda_u^{-1} \ne
\lambda_{\T}$, with $\Lambda_u$ given by {\rm (\ref{ea2})}.
\end{theorem}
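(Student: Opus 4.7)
Set $c(t) := (\cos(2\pi t_1),\ldots,\cos(2\pi t_d))$ and $p\cdot t := p_1t_1+\cdots+p_dt_d$ for brevity. Since $\mu\in\cP(\T)$ equals $\lambda_{\T}$ if and only if every one of its nonzero-frequency Fourier coefficients vanishes, it is enough to exhibit $u\in\R^d$ and $k\in\Z\setminus\{0\}$ for which
\[
\widehat{\mu}(k) \;=\; \int_{\T^d} e^{2\pi\imath k\, p\cdot t}\, |u\cdot c(t)|^{2\pi\imath k \beta}\, dt \;\neq\; 0.
\]
Observe that rescaling $u\mapsto \lambda u$ with $\lambda>0$ only multiplies this integral by the unit-modulus factor $\lambda^{2\pi\imath k\beta}$, so vanishing depends on the direction of $u$ alone.

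\textbf{Support reduction.} I proceed by induction on $d$, reducing to two clean sub-cases: (i) $p=0$, and (ii) $p_j\neq 0$ for all $j$. Indeed, if $u_{j_0}=0$ then $|u\cdot c(t)|$ is independent of $t_{j_0}$, so the $t_{j_0}$-factor $\int_{\T}e^{2\pi\imath k p_{j_0}t_{j_0}}\,dt_{j_0}$ equals $1$ when $p_{j_0}=0$ and $0$ otherwise. Hence whenever $p_{j_0}=0$ one can set $u_{j_0}=0$ and drop to dimension $d-1$; and whenever $p_{j_0}\neq 0$, the factor $u_{j_0}$ must be nonzero.

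\textbf{Base computations.} For (i), take $u=e^{(1)}$, so $\widehat\mu(k)=\int_0^1|\cos(2\pi t)|^{2\pi\imath k\beta}\,dt = \pi^{-1/2}\Gamma(\tfrac12+\pi\imath k\beta)/\Gamma(1+\pi\imath k\beta)$ by a standard Beta-function reduction; this is nonzero for every $k\neq 0$ because $\Gamma$ has no zeros. For (ii) with $d=1$, take $u=(1)$ and $k=2$ so that $kp_1$ is even: after splitting by parity, $\widehat\mu(2)$ reduces to a nonzero ratio of $\Gamma$-values. For $d=2$, take $u=(1,1)$ and exploit
\[
\cos(2\pi t_1)+\cos(2\pi t_2) \;=\; 2\cos(\pi(t_1+t_2))\cos(\pi(t_1-t_2)),
\]
together with the fact that $(t_1,t_2)\mapsto(t_1+t_2,t_1-t_2)\bmod 1$ is a measure-preserving $2$-to-$1$ self-cover of $\T^2$. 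For any even $k$ the integrand descends through this cover, and $\widehat\mu(k)$ splits into a product of two $1$-dimensional Beta integrals of the type handled in (i), each with an integer linear-phase prefactor of frequency $k(p_1\pm p_2)/2$, and each nonzero.

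\textbf{Main obstacle.} Completing (ii) for $d\geq 3$, when no convenient pair of $p_j$ permits the above factorization, is the technical heart of the argument. My plan is a perturbative continuation: set $u^{(\epsilon)}=(u_1',\ldots,u_{d-1}',\epsilon)$, where $(u_j')$ realizes the conclusion in the reduced $(d-1)$-dimensional problem (so that $\widehat{\mu_{u^{(0)}}}(k)=0$ only because $p_d\neq 0$ kills the $t_d$-integral), and then switch $\epsilon$ on. The logarithmic singularity of $\ln|u\cdot c(t)|$ along the zero hypersurface $\Sigma_u=\{u\cdot c(t)=0\}$ governs the $\epsilon$-dependence, and a principal-value analysis of the singular integrand $|u\cdot c(t)|^{2\pi\imath k\beta-1}\mathrm{sgn}(u\cdot c(t))$ obtained from $\partial_\epsilon|u\cdot c(t)|^{2\pi\imath k\beta}$ at $\epsilon=0$ should identify the leading term as a nonzero $(d-1)$-dimensional integral over $\Sigma_{u^{(0)}}$ that reduces to products of $\Gamma$-ratios via the same Beta calculus as in the base. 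Making these distributional manipulations rigorous, and verifying that the resulting boundary integral does not accidentally vanish for the chosen $k$, is what I expect to account for the ``lengthy'' nature of the proof advertised by the authors.
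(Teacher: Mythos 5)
Your support reduction and the explicit base computations are correct and in fact parallel the paper's own calculations: the paper's integral $I_{p,\beta}$ in (\ref{ea3}) is, up to normalization, your $d=1$ Fourier coefficient, and the identity $\cos(2\pi t_1)+\cos(2\pi t_2)=2\cos(\pi(t_1+t_2))\cos(\pi(t_1-t_2))$ driving your $d=2$ factorization appears verbatim in the proof of Lemma \ref{lema6} and again in the $d=2$ step of Lemma \ref{lema7}. The place where your proposal has a genuine gap is exactly where you flag one: the case $d\ge 3$, and the perturbative plan you sketch is not a viable route as stated. Setting $u^{(\varepsilon)}=(u_1',\ldots,u_{d-1}',\varepsilon)$ places the logarithmic singularity at the center of the analysis: the formal $\varepsilon$-derivative of the integrand involves $|u'\cdot c'(t')|^{2\pi\imath k\beta-1}$, whose modulus $|u'\cdot c'|^{-1}$ is non-integrable across the codimension-one hypersurface $\{u'\cdot c'=0\}\subset\T^{d-1}$. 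You have therefore not established that the Fourier coefficient is even differentiable in $\varepsilon$ at $0$, let alone identified a leading singular term, and there is nothing in your setup forcing that (hypothetical) boundary contribution to be nonzero. Invoking ``principal values'' and ``distributional manipulations'' names the problem but does not solve it.

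The paper sidesteps the singularity rather than confronting it, and this is the key idea you are missing. Instead of perturbing a \emph{small} coordinate, one scales out a \emph{dominant} one: on the cone $\cE_d$ where some $|u_{j_0}|>\sum_{k\ne j_0}|u_k|$, one may take $j_0=d$ and write $J(u)=e^{4\pi\imath\beta\ln|u_d|}\,\widetilde J(u_1/u_d,\ldots,u_{d-1}/u_d)$, so that the remaining argument $\sum_{j<d}(u_j/u_d)\cos(2\pi t_j)$ lies in $(-1,1)$ and never touches $\pm1$. The crucial technical ingredient is then Lemma \ref{lema6}: the one-variable function $i_{p,\beta}(x)=\int_{\T}e^{4\pi\imath pt+2\imath\beta\ln|x+\cos(2\pi t)|}\,{\rm d}t$ is \emph{real-analytic} on $(-1,1)$, proved by computing its Fourier coefficients explicitly (this is where $I_{p,\beta}$ and the product-to-sum identity reappear) and recognizing the resulting generating function as a $\prescript{}{3}{F}^{ }_{2}$ hypergeometric function, analytic on $\C\setminus[1,+\infty)$. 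Feeding this analyticity into the $(d-1)$-fold integral shows that $\widetilde J$ is real-analytic on the simplex $\{\sum_j|u_j|<1\}$, and nonconstancy there is then deduced precisely from your $d=2$ base computation, which forces infinitely many derivatives $i_{p,\beta}^{(2n)}(0)$ to be nonzero (Lemma \ref{lema7}). Since a nonconstant real-analytic function has a nowhere dense zero set of measure zero (Lemma \ref{lema8}), some $u\in\cE_d$ with $J(u)\ne0$ exists, proving the theorem. So retain your base cases and reduction, but replace the perturbation-in-$\varepsilon$ idea by the dominant-coordinate scaling together with the analyticity lemma.
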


To see that Theorem \ref{lemx} does indeed imply Lemma \ref{lem2Omega},
let $p_1, \ldots , p_d \in \Z$ and $\beta \in \R \setminus \{0\}$ be
given, and pick $u\in \R^d$ such that $\lambda_{\T^d} \circ
\Lambda_u^{-1} \ne \lambda_{\T}$. Consequently, there exists a continuous
function $f:\T \to \C$ for which $\int_{\T} f \, {\rm d}(
\lambda_{\T^d}\circ \Lambda_u^{-1} ) \ne \int_{\T} f \, {\rm d}\lambda_{\T}$. Note
that $f\circ \Lambda_u: \T^d \to \C$ is continuous $\lambda_{\T^d}$-almost
everywhere as well as bounded, hence Riemann integrable. Also recall
that the sequence $\bigl(   (n\vartheta_1, \ldots ,
n\vartheta_d) \bigr)$
is u.d.\ mod $1$ in $\R^d$ whenever $1, \vartheta_1, \ldots ,
\vartheta_d$ are $\Q$-independent \cite[Exp.I.6.1]{KN}. In the latter
case, therefore, 
\begin{align*}
\lim\nolimits_{N\to \infty} \frac1{N} \sum\nolimits_{n=1}^N f(\langle
x_n \rangle) &
= \lim\nolimits_{N\to \infty} \frac1{N} \sum\nolimits_{n=1}^N  f \circ
\Lambda_u \bigl( \langle (n\vartheta_1, \ldots , n\vartheta_d) \rangle
\bigr) \\
& = \int_{\T^d} f \circ \Lambda_u \, {\rm d}\lambda_{\T^d} =
\int_{\T} f \, {\rm d}(\lambda_{\T^d} \circ \Lambda_u^{-1} ) \ne
\int_{\T} f \, {\rm d}\lambda_{\T} \, , 
\end{align*}
showing that $(x_n)$ is {\em not\/} u.d.\ mod $1$.

Thus it remains to prove Theorem \ref{lemx}. Though the assertion of the
latter is quite plausible intuitively, the authors do not know of any
simple but rigorous justification. The proof presented here is
computational and proceeds
in essentially two steps: First the case of $d=1$ is analyzed in
detail. Specifically, it is shown that $\lambda_{\T} \circ \Lambda_u^{-1}
\ne \lambda_{\T}$ unless $p_1 \ne 0$ and $\beta u_1 = 0$. For itself,
this could be seen directly by noticing that the map $\Lambda_u :\T \to \T$ has a
non-degenerate critical point whenever $\beta u_1 \ne 0$, and hence cannot possibly preserve
$\lambda_{\T}$, see e.g.\ \cite[Lem.2.6]{BE} or
\cite[Ex.5.27(iii)]{BHPS}. The more elaborate calculation given here, however, is
useful also in the second step of the proof, i.e.\ the analysis for
$d\ge 2$. As it turns out, the case of $d\ge 2$ can, in essence, be reduced
to calculations already done for $d=1$.

To concisely formulate the subsequent
results, recall that the Euler Gamma function, denoted $\Gamma = \Gamma
(z)$ as usual, is a meromorphic function with poles precisely
at $z\in - \N_0 = \{ 0,-1,-2, \ldots\}$, and $\Gamma (z+1) = z \Gamma (z)\ne 0$ for every $z\in \C
\setminus (-\N_0)$. Also, for convenience every ``empty
sum'' is understood to equal zero, e.g.\ $\sum_{2\le j \le 1} j^2 =0$, whereas
every ``empty product'' is understood to equal $1$, e.g.\ $\prod_{2\le
j \le 1}
j^2 = 1$. Finally, the standard (ascending) Pochhammer symbol $(z)_n$
will be used where, given any $z\in \C$,
$$
(z)_n:= z (z+1) \ldots (z+n-1) = \prod\nolimits_{\ell =0}^{n-1}
(z+\ell) \, , \quad \forall n \in \N \, , 
$$
and $(z)_0:= 1$, in accordance with the convention on empty
products. Note that $(z)_n = \Gamma (z+n)/\Gamma (z)$ whenever $z\not
\in \C \setminus (-\N_0)$.

For every $p\in \Z$ and $\beta \in \R$, consider now the integral
\begin{equation}\label{ea3}
I_{p,\beta}:= \int_{\T} e^{4\pi \imath p t + 2\imath \beta \ln |\cos (2\pi
  t)|} \, {\rm d}t \, .
\end{equation}
The specific form of $I_{p, \beta}$ is suggested by the Fourier
coefficients of $\lambda_{\T} \circ \Lambda_u^{-1}$ in the case of $d=1$; see
the proof of Lemma \ref{lema4} below. Not surprisingly, the value of
$I_{p,\beta}$ can be expressed explicitly by means of special functions.

\begin{lem}\label{lema2}
For every $p\in \Z$ and $\beta \in \R \setminus \{0\}$,
\begin{equation}\label{ea4}
I_{p,\beta} =  
 (-1)^p e^{-\imath \beta \ln 4} \frac{2\imath \beta \Gamma (2\imath \beta)}{\bigl(
  \imath \beta \Gamma (\imath \beta) \bigr)^2} \cdot
\frac{(-\imath \beta)_{|p|}}{(1+\imath \beta)_{|p|}} \, ,
\end{equation}
and hence in particular
\begin{equation}\label{ea5}
|I_{p,\beta}|^2 = \frac{\beta \tanh (\pi \beta)}{ \pi ( p^2 + \beta^2)
} > 0 \, .
\end{equation}
\end{lem}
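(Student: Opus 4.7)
The plan is to evaluate $I_{p,\beta}$ by first reducing it, via symmetries of $\cos(2\pi t)$, to a standard trigonometric integral, then applying a classical cosine--power formula, and finally deriving $|I_{p,\beta}|^2$ from the reflection formula for $\Gamma$.

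The first step is a change of variable $u=2\pi t$ together with period-shifting. Since $|\cos u|$ has period $\pi$ and since $e^{2\pi i p} = 1$, the integral over $[0,2\pi]$ splits as twice the integral over $[0,\pi]$; and on $[\pi/2,\pi]$ the substitution $u\mapsto \pi-u$ together with $|\cos(\pi-u)|=\cos u$ pairs the two halves. After combining exponentials into a cosine, one obtains
\begin{equation*}
I_{p,\beta}=\frac{2}{\pi}\int_0^{\pi/2}\cos(2pu)\,(\cos u)^{2i\beta}\,du.
\end{equation*}
Note that this expression depends on $p$ only through $|p|$, which is consistent with the right-hand side of (\ref{ea4}) since $(-i\beta)_{|p|}/(1+i\beta)_{|p|}$ and $(-1)^p=(-1)^{|p|}$ do too.

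The second step is to invoke the standard integral (Gradshteyn--Ryzhik 3.631.9)
\begin{equation*}
\int_0^{\pi/2}(\cos x)^{\mu-1}\cos(ax)\,dx=\frac{\pi\,\Gamma(\mu)}{2^{\mu}\,\Gamma\!\bigl(\tfrac{\mu+a+1}{2}\bigr)\Gamma\!\bigl(\tfrac{\mu-a+1}{2}\bigr)},\qquad \Re\mu>0,
\end{equation*}
with $\mu=1+2i\beta$ and $a=2p$. This yields
\begin{equation*}
I_{p,\beta}=e^{-i\beta\ln 4}\,\frac{\Gamma(1+2i\beta)}{\Gamma(1+i\beta+p)\,\Gamma(1+i\beta-p)}.
\end{equation*}
Writing $\Gamma(1+2i\beta)=2i\beta\,\Gamma(2i\beta)$ and $\Gamma(1+i\beta)=i\beta\,\Gamma(i\beta)$, it remains to show
\begin{equation*}
\frac{\Gamma(1+i\beta)^2}{\Gamma(1+i\beta+p)\,\Gamma(1+i\beta-p)}=(-1)^{p}\,\frac{(-i\beta)_{|p|}}{(1+i\beta)_{|p|}}.
\end{equation*}
This is a direct bookkeeping exercise: $\Gamma(1+i\beta+|p|)/\Gamma(1+i\beta)=(1+i\beta)_{|p|}$, while iterating $\Gamma(z)=\Gamma(z+1)/z$ downwards gives $\Gamma(1+i\beta-|p|)=\Gamma(1+i\beta)/[(i\beta)(i\beta-1)\cdots(i\beta-|p|+1)]=(-1)^{|p|}\Gamma(1+i\beta)/(-i\beta)_{|p|}$.

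For $|I_{p,\beta}|^2$, the main tool is the reflection identity $\Gamma(1+z)\Gamma(1-z)=\pi z/\sin(\pi z)$. Applied with $z=2i\beta$ it produces $2\pi\beta/\sinh(2\pi\beta)$ in the numerator. Applied with $z=i\beta\pm p$ (where $p\in\Z$ forces $\sin(\pi(i\beta\pm p))=(-1)^p\sin(\pi i\beta)=(-1)^p i\sinh(\pi\beta)$) the two denominator factors combine as
\begin{equation*}
\Gamma(1+i\beta+p)\Gamma(1-i\beta-p)\,\Gamma(1+i\beta-p)\Gamma(1-i\beta+p)=\frac{\pi^{2}(\beta^{2}+p^{2})}{\sinh^{2}(\pi\beta)},
\end{equation*}
the $(-1)^p$ factors cancelling and $(i\beta+p)(i\beta-p)/i^{2}=\beta^{2}+p^{2}$ appearing. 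Dividing and using $\sinh(2\pi\beta)=2\sinh(\pi\beta)\cosh(\pi\beta)$ collapses the expression to $\beta\tanh(\pi\beta)/[\pi(p^{2}+\beta^{2})]$, as required. The only mildly delicate point is verifying the symmetry reduction in the first step and tracking the $(-1)^{p}$ factor through the Pochhammer identity; both are routine once set up.
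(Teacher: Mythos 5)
Your proof is correct and takes a genuinely more efficient route than the paper's. After the symmetry reduction to
$I_{p,\beta}=\frac{2}{\pi}\int_0^{\pi/2}\cos(2pu)(\cos u)^{2\imath\beta}\,du$
(which both approaches effectively make), the paper proceeds from scratch: it rewrites the cosine factor via the Chebyshev polynomial $T_{2|p|}$, expands $T_{2|p|}$ as a sum, reduces each term to a Beta integral, and then introduces an auxiliary polynomial $R_m(z)$ whose explicit factorization $R_m(z)=(-1)^m\prod_{\ell=0}^{m-1}(z-2\ell)$ it establishes by identifying all its roots and evaluating at $z=-1$. That is a fully self-contained but fairly long computation. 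You instead invoke the classical cosine-power formula (G\&R~3.631.9, valid for $\Re\mu>0$, which is satisfied since $\Re(1+2\imath\beta)=1$) to land immediately on
$I_{p,\beta}=e^{-\imath\beta\ln 4}\,\Gamma(1+2\imath\beta)/\bigl(\Gamma(1+\imath\beta+p)\Gamma(1+\imath\beta-p)\bigr)$,
and then convert to the Pochhammer form by iterating the functional equation of $\Gamma$; I checked the sign bookkeeping and the identity $\prod_{\ell=0}^{|p|-1}(\imath\beta-\ell)=(-1)^{|p|}(-\imath\beta)_{|p|}$ is exactly what is needed. For (\ref{ea5}) the paper uses $|\Gamma(\imath\beta)|^2=\pi/(\beta\sinh\pi\beta)$ together with the Legendre duplication formula, while you use the reflection identity $\Gamma(1+z)\Gamma(1-z)=\pi z/\sin\pi z$ at $z=2\imath\beta$ and $z=\imath\beta\pm p$; both collapse to the same answer via $\sinh 2\pi\beta=2\sinh\pi\beta\cosh\pi\beta$. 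The trade-off is that the paper's argument is elementary and self-contained (needing only the duplication formula and $|\Gamma(\imath\beta)|^2$), whereas yours is shorter but outsources the heavy lifting to a tabulated integral. Both are valid proofs of the lemma.
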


\begin{proof}
Substituting $-t$ for $t$ in (\ref{ea3}) shows that $I_{p,\beta} =
I_{|p|, \beta}$, and a straightforward calculation, with $T_{\ell}$
denoting the $\ell$-th Chebyshev polynomial ($\ell \in \N_0$), yields
\begin{align*}
I_{p,\beta} & = \int_{\T} e ^{4\pi \imath |p|t + 2 \imath \beta \ln |\cos (2\pi
  t)|} \, {\rm d}t  = \int_0^1 e^{2\pi \imath |p| x + 2 \imath
  \beta \ln |\cos (\pi x)|} \, {\rm d}x \\[2mm]
& = \int_0^{\frac12} 2 \cos (2\pi |p| x ) e^{2\imath \beta \ln |\cos (\pi
  x)|} \, {\rm d}x = 2 \int_0^{\frac12} T_{2|p|} \bigl(\cos(\pi
x)\bigr) e^{2\imath \beta \ln |\cos (\pi x)|} \, {\rm d}x \\[2mm]
& = \frac2{\pi} \int_0^1 \frac{T_{2|p|}(x)}{\sqrt{1-x^2}} e^{2\imath \beta
  \ln x} \, {\rm d}x = \frac{2}{\pi} \int_0^{+\infty} T_{2|p|} \left(
  \frac1{\sqrt{1+x^2}}\right) \frac{e^{-\imath \beta \ln (1+x^2)}}{1+x^2}
\, {\rm d} x \, .
\end{align*}
As the polynomial $T_{2|p|}$ can, for every $p\in \Z$ and $y\ne 0$, be
written as
$$
T_{2|p|}(y) = y^{2|p|} \sum\nolimits_{\ell = 0}^{|p|}
\left( \!\! \begin{array}{c} 2|p| \\ 2 \ell \end{array} \!\! \right)
(1-y^{-2})^{\ell} \, ,
$$
it follows that 
\begin{align*}
I_{p,\beta} & = \frac2{\pi} \sum\nolimits_{\ell = 0}^{|p|} (-1)^{\ell}
\left( \!\! \begin{array}{c} 2|p| \\ 2 \ell \end{array} \!\! \right)
\int_0^{+\infty} \frac{x^{2\ell}}{(1+x^2)^{1+|p| + \imath \beta}} \, {\rm d}
x \\[2mm]
& = \frac1{\pi}   \sum\nolimits_{\ell = 0}^{|p|} (-1)^{\ell}
\left( \!\! \begin{array}{c} 2|p| \\ 2 \ell \end{array} \!\! \right) 
\int_0^{+\infty} \frac{x^{\ell - \frac12}}{ (1+x)^{1+|p| + \imath \beta}} \,
{\rm d}x \\[2mm]
& = \frac1{\pi \Gamma (1 + |p| + \imath \beta)}  \sum\nolimits_{\ell = 0}^{|p|} (-1)^{\ell}
\left( \!\! \begin{array}{c} 2|p| \\ 2 \ell \end{array} \!\! \right)
\Gamma (\textstyle{\frac12} + \ell ) \Gamma (\textstyle{\frac12} + |p| - \ell + \imath \beta) \, .
\end{align*}
Note that $\Gamma$ is finite and non-zero for each argument appearing in this
sum. Recall that
$$
\Gamma ({\textstyle \frac12} + \ell ) = \frac{(2\ell)! \sqrt{\pi}}{\ell
  ! \,  2^{2\ell} } \, ,
\quad \forall \ell \in \N_0 \, ,
$$
and so
\begin{align*}
I_{p,\beta} & = \frac{(-1)^p (2|p|)!} { \sqrt{\pi} 2^{2|p|} \Gamma
  (1+|p| + \imath \beta)} \sum\nolimits_{\ell=0}^{|p|} \left\{
(-1)^{\ell} \: \frac{2^{2\ell} \Gamma (\frac12 + \ell + \imath
  \beta)}{(2\ell)! (|p| - \ell)!} \right\} \\[2mm]
& = \frac{(-1)^p \Gamma (\frac12 + |p|) \Gamma (\frac12 + \imath
  \beta)}{\pi \Gamma (1+|p| + \imath \beta)} \sum\nolimits_{\ell=0}^{|p|} \left\{
(-1)^{\ell} \! \left( \!\! \begin{array}{c} |p| \\  \ell \end{array} \!\!
\right) \prod\nolimits_{k=1}^{\ell} \frac{2k -1 + 2\imath \beta}{2k -1}
\right\} \\[2mm]
& = \frac{(-1)^p \Gamma (\frac12 + \imath \beta)}{\sqrt{\pi} 2^{|p|} \Gamma
(1+|p| + \imath \beta)}  \cdot \\
& \qquad \qquad \cdot  \sum\nolimits_{\ell=0}^{|p|} \left\{
(-1)^{\ell} \! \left( \!\! \begin{array}{c} |p| \\  \ell \end{array} \!\!
\right) \prod\nolimits_{k=1}^{\ell} (2k - 1 + 2\imath \beta)
\prod\nolimits_{k= \ell + 1}^{|p|} (2k-1)
\right\} \\[2mm]
& =  \frac{(-1)^p \Gamma (\frac12 + \imath \beta)}{\sqrt{\pi} 2^{|p|} \Gamma
(1+|p| + \imath \beta)}  R_{|p|} (2\imath \beta) \, ,
\end{align*}
where, for every $m\in \N_0$, the polynomial $R_m$ is given by
\begin{equation}\label{ea5a}
R_m(z) =  \sum\nolimits_{\ell=0}^{m} \left\{
(-1)^{\ell} \! \left( \!\! \begin{array}{c} m \\  \ell \end{array} \!\!
\right) \prod\nolimits_{k=1}^{\ell} (2k - 1 + z)
\prod\nolimits_{k= \ell + 1}^{m} (2k-1)
\right\} \, .
\end{equation}
Thus for example
$R_0 (z) \equiv 1$, $R_1(z) = - z$, $R_2(z) = -2z + z^2$.
Note that the degree of $R_m$ equals $m$, and for every $m\in \N$ and $j\in
\{0,1,\ldots , m-1\}$,
\begin{align*}
R_m(2j) & =  \sum\nolimits_{\ell=0}^{m} \left\{
(-1)^{\ell} \! \left( \!\! \begin{array}{c} m \\  \ell \end{array} \!\!
\right) \prod\nolimits_{k=1}^{\ell} (2k - 1 + 2j)
\prod\nolimits_{k= \ell + 1}^{m} (2k-1)
\right\} \\[2mm]
& = \sum\nolimits_{\ell=0}^{m} \left\{
(-1)^{\ell} \! \left( \!\! \begin{array}{c} m \\  \ell \end{array} \!\!
\right) \prod\nolimits_{k=j+1}^{m} (2k - 1)
\prod\nolimits_{k= \ell + 1}^{\ell + j} (2k-1)
\right\} \\[2mm]
& = \left\{ \prod\nolimits_{k=j+1}^{m} (2k-1) \right\} \sum\nolimits_{\ell=0}^{m} \left\{
(-1)^{\ell} \! \left( \!\! \begin{array}{c} m \\  \ell \end{array} \!\!
\right) \prod\nolimits_{k=1}^{j} (2\ell + 2k -1)
\right\} \: = 0 \, .
\end{align*}
Here the elementary fact has been used that
$ \sum\nolimits_{\ell=0}^{m} 
(-1)^{\ell} \! \left( \!\! \begin{array}{c} m \\  \ell \end{array} \!\!
\right) Q(\ell) = 0$
holds for every polynomial $Q$ of degree less than $m$. As the
polynomial $R_m$ has
degree $m$, it cannot have any further roots besides $0,2,4, \ldots , 2m-2$,
and so
\begin{equation}\label{ea5aa}
R_m(z) = c_m \prod\nolimits_{\ell = 0}^{m-1} (z - 2\ell) \, , 
\end{equation}
with a constant $c_m$ yet to be determined. The correct value of $c_m$
is readily found by observing that (\ref{ea5aa}) yields
$$
R_m(-1) = c_m \prod\nolimits_{\ell = 0}^{m-1} ( -1 -2\ell) = c_m
(-1)^m \cdot 1 \cdot 3 \cdot \ldots \cdot (2m-1) \, ,
$$
whereas, by the very definition (\ref{ea5a}) of $R_m$,
$$
R_m(-1) = \sum\nolimits_{\ell=0}^{m} \! \left\{
(-1)^{\ell} \! \left( \!\! \begin{array}{c} m \\  \ell \end{array} \!\!
\right) \! \prod\nolimits_{k=1}^{\ell} (2k -2)
\prod\nolimits_{k= \ell + 1}^{m} (2k-1)
\right\}  = \prod \nolimits_{k=1}^m (2k-1) \, .
$$
Thus $c_m = (-1)^m$, and overall
$$
R_m(z) = (-1)^m \prod\nolimits_{\ell = 0}^{m-1} (z- 2\ell) =
\prod\nolimits_{\ell = 0}^{m-1} (2\ell - z) = 2^m
\bigl( -\textstyle{\frac12} z \bigr)_m \, .
$$
With this, one obtains
\begin{align*}
I_{p,\beta} &= \frac{(-1)^p \Gamma (\frac12 + \imath \beta)}{\sqrt{\pi}
  2^{|p|} \Gamma (1+|p| + \imath \beta)} \prod \nolimits_{\ell = 0}^{|p|-1}
(2\ell - 2\imath \beta)  \\
& = \frac{2 (-1)^{p+1} e^{-\imath \beta \ln 4}
}{|p| - \imath \beta}\cdot \frac{\Gamma (2\imath \beta)}{\Gamma
  (\imath \beta)^2} 
\prod\nolimits_{\ell=1}^{|p|} \frac{\ell - \imath \beta}{\ell + \imath
  \beta}  \\[2mm]
& = (-1)^p e^{-\imath \beta \ln 4}
\frac{2\imath \beta \Gamma (2\imath \beta)}{\bigl(
  \imath \beta \Gamma (\imath \beta) \bigr)^2} \cdot
\frac{(-\imath \beta)_{|p|}}{(1+\imath \beta)_{|p|}} 
\, , 
\end{align*}
where the so-called Legendre duplication formula for the
$\Gamma$-function has been used in the form
$$
\Gamma (\imath \beta) \Gamma (\textstyle{\frac12} + \imath \beta) = 2^{1 - 2\imath \beta}
\sqrt{\pi} \, \Gamma (2 \imath \beta) \, , \quad \forall \beta \in \R
\setminus \{0\}\, .
$$
Thus (\ref{ea4}) has been established, and 
together with the standard fact 
$$
|\Gamma (\imath \beta)|^2 = \frac{\pi}{\beta \sinh (\pi \beta)} \, , \quad
\forall \beta \in \R \setminus \{0\} \, ,
$$
this immediately yields
$$
|I_{p,\beta}|^2  = 
\frac{4}{p^2 +\beta^2} \cdot \frac{|\Gamma (2\imath \beta)|^2}{|\Gamma
  (\imath \beta)|^4}
=
\frac{4\beta^2 \pi}{2\beta \sinh (2\pi \beta)} \cdot
\frac{\sinh^2 (\pi  \beta)}{\pi^2 (p^2 + \beta ^2)} = 
\frac{\beta \tanh (\pi \beta)}{\pi ( p^2 + \beta^2)} \, ,
$$
i.e., (\ref{ea5}) holds as claimed.
\end{proof}

An immediate consequence of Lemma \ref{lema2} is that for $d=1$ the map $\Lambda_u$
does typically not preserve $\lambda_{\T}$. Notice that the following
result is much stronger than (and hence obviously proves) Theorem \ref{lemx} for $d=1$.

\begin{lem}\label{lema4}
Let $p_1 \in \Z$, $\beta \in \R$ and $u_1 \in \R$. Then $\lambda_{\T}
\circ \Lambda_u^{-1} = \lambda_{\T}$, where $\Lambda_u$ is given by {\rm (\ref{ea2})} with
$d=1$, if and only if $p_1 \ne 0$ and $\beta u_1 = 0$.
\end{lem}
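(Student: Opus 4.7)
My plan is to split the proof according to the direction of the biconditional, relying on the explicit Fourier-coefficient computation in Lemma \ref{lema2} to handle the hard direction.

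First I would dispense with the easy implication. If $\beta u_1 = 0$ then, by the convention $\ln 0 := 0$ together with the case $\beta = 0$, the term $\beta \ln|u_1 \cos(2\pi t)|$ vanishes identically, and $\Lambda_u$ reduces to $t \mapsto \langle p_1 t\rangle$. If additionally $p_1 \ne 0$, this is the standard $p_1$-fold covering of $\T$, whose Fourier coefficients $\int_{\T} e^{2\pi \imath k p_1 t}\, {\rm d}t$ vanish for every $k\in\Z\setminus\{0\}$, so $\lambda_{\T} \circ \Lambda_u^{-1} = \lambda_{\T}$.

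For the converse, I would show contrapositively that if either $p_1 = 0$ or $\beta u_1 \ne 0$, some Fourier coefficient of $\lambda_{\T}\circ\Lambda_u^{-1}$ is non-zero. The trivial sub-case is $p_1 = 0$ together with $\beta u_1 = 0$, where $\Lambda_u \equiv \langle 0\rangle$ and the pushforward is a point mass. The main sub-case is $\beta u_1 \ne 0$ (with $p_1$ arbitrary). Here the key observation is that
\begin{equation*}
\widehat{\lambda_{\T}\circ\Lambda_u^{-1}}(2) = \int_{\T} e^{4\pi \imath p_1 t + 4\pi \imath \beta \ln|u_1|+ 4\pi \imath \beta \ln|\cos(2\pi t)|}\,{\rm d}t = e^{4\pi \imath \beta \ln|u_1|}\, I_{p_1,\, 2\pi\beta},
\end{equation*}
where $I_{p,\beta}$ is precisely the integral (A.3) evaluated in Lemma \ref{lema2}. (The choice $k=2$, rather than $k=1$, is what makes the $t$-coefficient match the $4\pi \imath p t$ form of $I_{p,\beta}$ for every integer $p_1$, whether even or odd.) Since $\beta \ne 0$, formula (A.5) gives $|I_{p_1, 2\pi\beta}|^2 = \beta\tanh(2\pi^2\beta)/(p_1^2 + 4\pi^2\beta^2) > 0$, so this Fourier coefficient is non-zero and $\lambda_{\T}\circ\Lambda_u^{-1} \ne \lambda_{\T}$. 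The remaining case $p_1 = 0$ and $\beta u_1 \ne 0$ is handled in the same way, but using the coefficient at $k=1$, which equals $e^{2\pi \imath \beta \ln|u_1|} I_{0,\pi\beta}$ and is again non-zero by (A.5).

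The main (and essentially the only) obstacle is the bookkeeping needed to line up the Fourier coefficient integral with the normalization chosen in the definition of $I_{p,\beta}$, and in particular to verify that a single choice of $k$ (namely $k=2$ when $p_1 \ne 0$) works uniformly in $p_1$. Once that alignment is made, Lemma \ref{lema2} does all of the work; no extra analytic input is required.
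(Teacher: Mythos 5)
Your proposal is correct and matches the paper's proof almost verbatim: the paper likewise disposes of $\beta u_1 = 0$ by inspecting $\widehat{\lambda_{\T}\circ\Lambda_u^{-1}}(k)$ for all $k$, and for $\beta u_1 \ne 0$ computes the single coefficient $\widehat{\lambda_{\T}\circ\Lambda_u^{-1}}(2) = e^{4\pi\imath\beta\ln|u_1|}\,I_{p_1,2\pi\beta}\ne 0$ via Lemma \ref{lema2}. Two cosmetic points: your final $k=1$ sub-case is redundant (the $k=2$ computation already covers $p_1=0$), and your instantiation of (\ref{ea5}) drops a factor of $2$ — it should read $|I_{p_1,2\pi\beta}|^2 = 2\beta\tanh(2\pi^2\beta)/(p_1^2 + 4\pi^2\beta^2)$ — though of course positivity is all that is needed.
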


\begin{proof}
Simply note that for $\beta u_1 = 0$ and every $k\in \Z$,
$$
\widehat{\lambda_{\T} \circ \Lambda_u^{-1}} (k) = \left\{
\begin{array}{cll}
1 & & \mbox{\rm if } kp_1 = 0 \, , \\
0 & & \mbox{\rm if } kp_1 \ne 0 \, ,
\end{array}
\right.
$$
and hence $\lambda_{\T} \circ \Lambda_u^{-1} = \lambda_{\T}$ precisely if
$p_1 \ne 0$. On the other hand, for $\beta u_1 \ne 0$,
\begin{align*}
\widehat{\lambda_{\T} \circ \Lambda_u^{-1}} (2)  & =  \int_{\T}
e^{4\pi \imath t} \, {\rm d}(\lambda_{\T} \circ \Lambda_u^{-1} ) (t)  = 
\int_{\T} e^{4\pi \imath (p_1 t + \beta \ln |u_1 \cos (2\pi t)|)} \,
{\rm d}t  \\
& = e^{4\pi \imath \beta \ln|u_1|} I_{p_1, 2\pi \beta} \ne 0 \, ,
\end{align*}
showing that $\lambda_{\T}\circ \Lambda_u^{-1} \ne \lambda_{\T}$ in this case.
\end{proof}

As indicated earlier, the case of $d\ge 2$ of Theorem \ref{lemx} is now
going to be studied and, in a
way, reduced to the case of $d=1$. To this end, let again $p\in \Z$ and
$\beta \in \R$ be given, and consider the function
$i_{p,\beta}:\R \to \C$ with
\begin{equation}\label{funca}
i_{p,\beta} (x) = \int_{\T} e^{4\pi \imath p t + 2 \imath \beta \ln
  |x + \cos (2\pi t) |} \, {\rm d}t \, , \quad
\forall x \in \R \, .
\end{equation}
A few elementary properties of $i_{p,\beta}$ are contained in

\begin{lem}\label{lema5}
For every $p\in \Z$ and $\beta\in \R$, the function
$i_{p,\beta}$ is continuous and even, with
$|i_{p,\beta}(x)|\le 1$ for all $x\in \R$. Moreover, $i_{p,\beta}(0) = I_{p,\beta}$ and $i_{p,\beta}(1) = e^{\imath \beta
  \ln 4} I_{2p,2\beta}$; in particular, $i_{p,\beta}(0)\ne
i_{p,\beta}(1)$ whenever $\beta \ne 0$.
\end{lem}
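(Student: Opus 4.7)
The plan is to dispose of the six assertions one by one, each following from a short and direct manipulation of the defining integral (\ref{funca}).

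I will begin with the three structural claims. The estimate $|i_{p,\beta}(x)|\le 1$ is immediate because the integrand of (\ref{funca}) has modulus identically $1$ on the complement of the $\lambda_{\T}$-null set $\{t\in\T:x+\cos(2\pi t)=0\}$ (and, given the convention $\ln 0:=0$, modulus $1$ there too). Evenness follows from the substitution $t\mapsto t+\tfrac12$: since $\cos(2\pi(t+\tfrac12))=-\cos(2\pi t)$ and $e^{4\pi\imath p(t+\frac12)}=e^{4\pi\imath p t}$ (using $p\in\Z$), the integral defining $i_{p,\beta}(-x)$ is transformed into that defining $i_{p,\beta}(x)$. Continuity at any $x_0\in\R$ is a routine dominated convergence argument: the ``singular set'' $\{t\in\T:\cos(2\pi t)=-x_0\}$ has at most two points, so given $\varepsilon>0$ one picks an open neighborhood $U\subset\T$ of it with $\lambda_{\T}(U)<\varepsilon$; on $\T\setminus U$ the integrand depends jointly continuously on $(x,t)$ with uniform bound $1$ for $x$ sufficiently near $x_0$, while the $U$-contribution remains bounded by $\varepsilon$ uniformly in $x$.

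Next I turn to the two explicit evaluations. Taking $x=0$ in (\ref{funca}) reproduces (\ref{ea3}) verbatim, so $i_{p,\beta}(0)=I_{p,\beta}$. For $x=1$, I would apply the half-angle identity $1+\cos(2\pi t)=2\cos^{2}(\pi t)$ to split off the constant factor $e^{2\imath\beta\ln 2}=e^{\imath\beta\ln 4}$, leaving the integral $\int_{\T}e^{4\pi\imath p t+4\imath\beta\ln|\cos(\pi t)|}\,{\rm d}t$. Substituting $u=t/2$ converts this into $2\int_{0}^{1/2}e^{8\pi\imath p u+4\imath\beta\ln|\cos(2\pi u)|}\,{\rm d}u$; the key observation is that the new integrand is $\tfrac12$-periodic in $u$ (the exponential factor because $p\in\Z$, and $|\cos(2\pi u)|$ trivially), so doubling its integral over a half-period reconstitutes its full integral over $\T$, which is exactly $I_{2p,2\beta}$.

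The final claim, separating $i_{p,\beta}(0)$ from $i_{p,\beta}(1)$ when $\beta\ne 0$, is cleanest at the level of moduli. Since $|e^{\imath\beta\ln 4}|=1$, formula (\ref{ea5}) of Lemma \ref{lema2} yields
\begin{equation*}
\frac{|i_{p,\beta}(0)|^{2}}{|i_{p,\beta}(1)|^{2}}=\frac{|I_{p,\beta}|^{2}}{|I_{2p,2\beta}|^{2}}=\frac{2\tanh(\pi\beta)}{\tanh(2\pi\beta)},
\end{equation*}
and the double-angle identity $\tanh(2x)=2\tanh x/(1+\tanh^{2}x)$ shows this ratio equals $1+\tanh^{2}(\pi\beta)$, which is strictly greater than $1$ whenever $\beta\ne 0$. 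The only point in the whole argument that requires any care is the continuity step, where the logarithmic singularity must be controlled; but because the outer exponential renders the integrand bounded by $1$, even this reduces to a routine application of dominated convergence.
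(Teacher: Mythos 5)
Your proof is correct and follows essentially the same route as the paper's: dominated convergence for continuity (the paper cites it directly, using the pointwise a.e.\ convergence of $\ln|y+\cos(2\pi t)|$ and the bound $1$; your $\varepsilon$-neighborhood splitting is just a hands-on version of the same argument), the half-angle identity plus a half-period substitution for $i_{p,\beta}(1)$ (the paper writes this in one line, leaving the change of variable implicit exactly as you spell it out), and the modulus ratio from (\ref{ea5}) for the final inequality. The only cosmetic difference is that you express the ratio as $|i_{p,\beta}(0)|^2/|i_{p,\beta}(1)|^2 = 1+\tanh^2(\pi\beta) > 1$ via the $\tanh$ double-angle formula, whereas the paper computes the reciprocal $\tfrac12\bigl(1+1/\cosh(2\pi\beta)\bigr)<1$; these are the same identity.
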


\begin{proof}
Since for every $x\in \R$,
$$
\lim\nolimits_{y \to x} \ln| y + \cos (2\pi t) | = \ln | x + \cos (2\pi
t) |
$$
holds for all but (at most) two $t\in \T$, the continuity of
$i_{p,\beta}$ follows from the Dominated Convergence
Theorem. Clearly, $i_{p,\beta}$ is even, with $|i_{p,\beta}(x)| \le
\int_{\T} 1\, {\rm  d}\lambda_{\T} = 1$ for every $x\in \R$, and $i_{p,\beta}(0) =I_{p,\beta}$. 
Finally, it follows from
$$
i_{p,\beta} (1)  = e^{\imath \beta \ln 4}
\int_{\T} e^{4\pi \imath p t + 4 \imath \beta \ln |\cos (\pi t)|} \, {\rm
  d}t = e^{\imath \beta \ln 4} I_{2 p , 2 \beta} \, ,
$$
and (\ref{ea5}) that, for every $p\in \Z$ and $\beta \in \R \setminus \{0\}$,
$$
\left|
\frac{i_{p,\beta}(1)}{i_{p,\beta}(0)}
\right|^2  
= 
\frac{|I_{2p, 2\beta}|^2}{|I_{p,\beta}|^2}
= \frac{2\beta \tanh (2\pi \beta )}{4 p^2 + 4 \beta^2}
\cdot \frac{p^2 + \beta ^2}{\beta \tanh (\pi \beta)} = \frac12 \left(
  1 + \frac1{\cosh (2\pi  \beta)}\right)  < 1 \, ,
$$
and hence $i_{p,\beta}(1)\ne i_{p,\beta}(0)$.
\end{proof}

The subsequent analysis crucially depends on the fact that
$i_{p,\beta}$ is actually much smoother than Lemma
\ref{lema5} seems to suggest. Recall that a function $f:\R^m\to \C$ is {\em
  real-analytic\/} on an open set $\cU\subset \R^m$ if $f$ can, in a
neighbourhood of each point in $\cU$, be represented as a convergent power
series. As will become clear soon, the ultimate proof of Theorem \ref{lemx} relies heavily on
the following refinement of Lemma \ref{lema5}.

\begin{lem}\label{lema6}
For every $p\in \Z$ and $\beta\in \R$, the function
$i_{p,\beta}$ is real-analytic on $(-1,1)$.
\end{lem}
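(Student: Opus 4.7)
Fix $x_0 \in (-1,1)$ and write $t_0 = t_0(x) := \tfrac{1}{2\pi}\arccos(-x)$, which is real-analytic in $x$ on $(-1,1)$, takes values in $(0,\tfrac12)$, and satisfies $\cos(2\pi t_0) = -x$. The plan is to factor out explicitly the zeros of $x + \cos(2\pi t)$ from the integrand in (\ref{funca}), so that the remaining $x$-dependence becomes manifestly analytic.

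Viewing $\T = [-\tfrac12,\tfrac12]$, split
$$
i_{p,\beta}(x) = \int_{-t_0}^{t_0} e^{4\pi \imath p t} |x+\cos(2\pi t)|^{2\imath \beta}\, {\rm d}t + \int_{t_0}^{1-t_0} e^{4\pi \imath p t} |x+\cos(2\pi t)|^{2\imath \beta}\, {\rm d}t \, ,
$$
so that the bracket is positive inside the first integral and negative inside the second. Substitute $t = t_0 s$ in the first integral; the identity $\cos A - \cos B = -2\sin\tfrac{A+B}{2}\sin\tfrac{A-B}{2}$ together with $\cos(2\pi t_0) = -x$ yields
$$
x + \cos(2\pi t_0 s) = -2\sin\bigl(\pi t_0(s+1)\bigr)\sin\bigl(\pi t_0(s-1)\bigr) = (1-s^2)\, h(s,t_0)\, ,
$$
with $h(s,t_0):= 2\pi^2 t_0^2 \, \mathrm{sinc}(\pi t_0(1+s))\, \mathrm{sinc}(\pi t_0(1-s))$, where $\mathrm{sinc}(y):=\sin(y)/y$ (extended by $1$ at $0$). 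The function $h$ is jointly real-analytic in $(s,t_0)\in \R^2$ and strictly positive on $[-1,1]\times (0,\tfrac12)$. Shifting $t \mapsto t - \tfrac12$ and substituting analogously reduces the second integral to the same form, with $t_0$ replaced by $\widetilde{t}_0 := \tfrac12 - t_0$.

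Consequently, modulo the unit-modulus factor $(1-s^2)^{2\imath\beta}:= e^{2\imath\beta \ln(1-s^2)}$ (which does not depend on $x$), each piece of $i_{p,\beta}(x)$ is of the form
$$
J(\tau) := \tau \int_{-1}^{1} e^{4\pi \imath p \tau s}\, (1-s^2)^{2\imath \beta}\, h(s,\tau)^{2\imath \beta}\, {\rm d}s \, , \quad \tau \in \{t_0(x),\widetilde{t}_0(x)\} \, .
$$
For $\tau_0 \in \{t_0(x_0), \widetilde{t}_0(x_0)\}\subset (0,\tfrac12)$, the continuous positive function $h(\cdot,\tau_0)$ is bounded away from $0$ and $\infty$ on $[-1,1]$, so $h(s,\tau)$ remains in a small complex neighbourhood of the positive real axis when $\tau$ is allowed to vary in a sufficiently small complex neighbourhood $U$ of $\tau_0$, uniformly in $s\in [-1,1]$. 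Using the principal branch of the logarithm, $h(s,\tau)^{2\imath\beta}$ is then holomorphic in $\tau \in U$ for each fixed $s$, with $|h(s,\tau)^{2\imath\beta}|$ uniformly bounded. A standard application of Morera's theorem together with Fubini (or direct differentiation under the integral sign) shows that $J$ extends holomorphically to $U$. Composing with the real-analytic maps $x \mapsto t_0(x)$ and $x \mapsto \widetilde{t}_0(x)$ yields real-analyticity of $i_{p,\beta}$ near $x_0$; since $x_0 \in (-1,1)$ was arbitrary, the claim follows.

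The main obstacle is the verification that the factorization $x + \cos(2\pi t_0 s) = (1-s^2) h(s,t_0)$ genuinely isolates \emph{all} the non-analyticity in the variable that matters, namely $x$: one must check that $h$ stays real-analytic and uniformly bounded away from $0$ and $\infty$ on the closed parameter domain, so that the only non-smooth behaviour of the integrand in $s$ is confined to the factor $(1-s^2)^{2\imath\beta}$ of unit modulus, which is harmless under the integral sign. Once this cleaning-up is done, the transfer of analyticity from $\tau$ to $x$ via $\arccos$ is routine.
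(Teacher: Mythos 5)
Your argument is correct and takes a genuinely different route from the paper. The paper's proof is Fourier-analytic: it expands $f(t)=i_{p,\beta}\bigl(\cos(\pi t)\bigr)$ in a Chebyshev/Fourier series with coefficients $c_k = e^{\imath\beta\ln 4}I_{|p|-k,\beta}I_{|p|+k,\beta}$, shows $|c_n|=\cO(n^{-2})$, identifies the tail series as a $\prescript{}{3}{F}^{}_{2}$ hypergeometric function analytic on $\C\setminus[1,+\infty)$, and then composes with $z\mapsto 2z^2-1\pm 2\imath z\sqrt{1-z^2}$. By contrast, you split the $t$-integral at the (simple) zeros of $x+\cos(2\pi t)$, rescale each arc so that those zeros land at $s=\pm 1$, factor the integrand as $(1-s^2)^{2\imath\beta}h(s,\tau)^{2\imath\beta}$ with $h$ entire and strictly positive on the relevant compact region, and then complexify the real parameter $\tau=t_0(x)$ (resp.\ $\widetilde{t}_0(x)$) to get holomorphy of $J$ via Morera/Fubini; composing with the real-analytic $\arccos$ finishes the job. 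Your computations (the product-to-sum identity, the $\mathrm{sinc}$ factorization, the positivity and joint analyticity of $h$, and the reduction of the second arc to the same form via $t\mapsto t-\tfrac12$) all check out. This is arguably more conceptual and avoids special-function machinery.

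One practical caveat worth flagging: the paper's proof yields more than the stated lemma. It shows that $i_{p,\beta}$ agrees on $(-1,1)$ with a function holomorphic on the full open unit disc, and hence that the Taylor series of $i_{p,\beta}$ at $0$ converges on all of $(-1,1)$ — a fact explicitly invoked later in the proof of Lemma~\ref{lema7} (equation~\eqref{eqamons}). Your argument establishes local real-analyticity on $(-1,1)$ — which is exactly what Lemma~\ref{lema6} asserts — but it only produces a holomorphic extension to some complex neighbourhood of $(-1,1)$, not necessarily to the whole unit disc, so it does not by itself give the radius-of-convergence-$\ge 1$ statement at $0$. If you were to replace the paper's proof with yours, that auxiliary global power-series fact would need a separate justification before it could be used in Lemma~\ref{lema7}.
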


\begin{proof}
As $i_{p,0}$ is constant, and thus trivially real-analytic, henceforth
assume $\beta \ne 0$. 
By Lemma \ref{lema5}, the function $f:\T \to \C$ with $f(t)=
i_{p,\beta}\bigl(\cos (\pi t) \bigr)$ is well-defined and continuous.
Hence it can be represented, at least in the
$L^2(\lambda_{\T})$-sense, as a Fourier series $f(t) \sim \sum_{k\in \Z} c_k e^{2\pi \imath k t}$ where, for every
$k\in \Z$,
\begin{align*}
c_k & = \int_{\T } f(t) e^{-2\pi \imath k t} \, {\rm d} t
 = \int_{\T^2} e ^{-2\pi \imath k t_1 + 4 \pi \imath |p|t_2 + 2\imath \beta \ln | \cos (\pi
  t_1) + \cos (2\pi t_2) |} \, {\rm d} t \\[1mm]
& = \int_{\T^2} e^{4\pi \imath |p| (t_1 - t_2) - 4\pi \imath k (t_1 + t_2) + 2\imath
  \beta \ln |2\cos (2\pi t_1) \cos (2\pi t_2)|} \, {\rm
  d}t \\[1mm]
& = e^{\imath \beta \ln 4} \int_{\T } e^{4\pi \imath (|p|-k)t + 2 \imath \beta
  \ln |\cos (2\pi t)|} \, {\rm d}t  \int_{\T }
e^{4\pi \imath (|p|+k) t + 2\imath \beta \ln|\cos(2 \pi t)|} \, {\rm
  d} t  \\[1mm]
& = e^{\imath \beta \ln 4} I_{|p|-k, \beta} I_{|p|+k, \beta} \, .
\end{align*}
Since $c_{-k} = c_k$, the Fourier series of $f$ is 
$$
c_0 + 2  \sum\nolimits_{n\in \N} c_n \cos (2\pi n
t ) = c_0 + 2 \sum\nolimits_{n=1}^{\infty} c_n T_{2n}\bigl( \cos (\pi t )
\bigr) \, , 
$$ 
and since furthermore
$$
|c_n| = |I_{n-|p|, \beta} I_{n+|p|, \beta}| =\frac{\beta
  \tanh (\pi \beta)}{\pi \sqrt{(n^2 + p^2 + \beta^2)^2 - 4 n^2 p^2}} = \cO
(n^{-2}) \, , \quad \mbox {as } n \to \infty \, ,
$$
and hence $\sum_{n=1}^{\infty}|c_n|<+\infty$, this series converges
uniformly on $\T$, by the Weierstrass M-test.
It follows that $i_{p,\beta} (x) = c_0 + 2 \sum\nolimits_{n=1}^{\infty} c_n
T_{2n}(x)$ uniformly in $x\in [-1,1]$.

For every $y \in (-1,1)$, consider now the auxiliary function
$$
h(x,y):= 2 \sum\nolimits_{n = 1+ |p|}^{\infty} c_n T_{2n}(x)
y^n \, .
$$
Note that $
i_{p,\beta} (x) = c_0 + 2 \sum\nolimits_{n=1}^{|p|} c_n
T_{2n}(x) + \lim\nolimits_{y \uparrow 1} h(x,y) $ uniformly in $x\in [-1,1]$.
In addition, introduce an analytic function on the open unit disc as
\begin{equation}\label{ea6}
H(z) := \sum\nolimits_{n=1+ |p|}^{\infty} c_n z^n \, , \quad \forall
z\in \C :  |z| < 1 \, ,
\end{equation}
and observe that 
\begin{align*}
H&( z)  = z^{1+|p|} \sum\nolimits_{n=0}^{\infty} c_{n+1+ |p|} z^n =
e^{\imath \beta \ln 4} z^{1+ |p|} \sum\nolimits_{n=0}^{\infty} I_{n+1,\beta} I_{n+1+
2|p|, \beta} z^n \\[2mm]
& = e^{-\imath \beta \ln 4}z^{1+|p|} \frac{\bigl( 2 \imath \beta \Gamma (2\imath
  \beta)\bigr)^2}{\bigl( \imath \beta \Gamma (\imath \beta) \bigr)^4}
\sum\nolimits_{n=0}^{\infty}
\frac{(-\imath \beta)_{n+1}(-\imath \beta)_{n+1+2|p|}}{(1+\imath \beta)_{n+1} (1+\imath \beta)_{n+1+
    2|p|}} z^n \\[2mm]
& = e^{-\imath \beta \ln 4}z^{1+ |p|} \frac{\bigl( 2 \imath \beta \Gamma (2\imath
  \beta)\bigr)^2}{\bigl( \imath \beta \Gamma (\imath \beta) \bigr)^4}
\cdot \frac{(\imath \beta)^2}{(1+\imath \beta)^2}\cdot \frac{(1-\imath
  \beta)_{2|p|}}{(2+\imath \beta)_{2|p|}}
\cdot \\
& \qquad \qquad 
\cdot
\sum\nolimits_{n=0}^{\infty}
\frac{(1-\imath \beta)_n(1+2|p|-\imath \beta)_{n}}{(2+\imath \beta)_n (2+2|p|+\imath \beta)_{n}}
z^n \\[2mm]
& =
\frac{4 e^{-\imath \beta \ln 4} \Gamma (2\imath \beta)^2 (1-\imath
  \beta)_{2|p|}}{(1+\imath \beta)^2 \Gamma(\imath \beta)^4 (2+\imath
  \beta)_{2|p|}} \cdot \\
& \qquad \qquad \cdot
\prescript{}{3}{F}^{ }_{2} (1-\imath \beta ,1+ 2|p| - \imath \beta , 1; 2+\imath \beta, 2+2|p|+ \imath \beta ; z) z^{1+|p|}\, ;
\end{align*}
here the standard notation for (generalized) hypergeometric functions
has been used, see e.g.\ \cite[Ch.II]{MOS} or \cite[Ch.16]{NIST}. Recall that $\prescript{}{3}{F}^{
}_{2}$ is an analytic function on $\C \setminus [1,+\infty)$, that is, on the entire complex plane minus a
cut from $1$ to $\infty$ along the positive real axis. Hence $H$ as
given by (\ref{ea6}) can be extended analytically to $\C \setminus [1,+\infty)$ as well. Observe now that 
\begin{align*}
H (e^{2\pi \imath t} y )  +   H(e^{-2\pi \imath t} y) & = 2 \! \sum\nolimits_{n=1+ |p|}^{\infty} \! \! c_n T_{2n} \bigl(\cos (\pi t )
\bigr) y^n \\
& = h\bigl( \cos (\pi t) , y \bigr) \, , \quad 
\forall t \in \T , y \in (-1,1) \, .
\end{align*}
It follows that, for all $x\in [-1,1]$,
\begin{align*}
i_{p,\beta}(x) & = c_0 + 2 \sum\nolimits_{n=1}^{|p|} c_n
T_{2n}(x) \: + \\
& \quad \!\!
\: + \:  \lim \nolimits_{y \uparrow 1} \left\{
H \bigl( (2x^2 - 1 + 2\imath x \sqrt{1-x^2}) y \bigr) +
H \bigl( (2x^2 - 1  - 2\imath x \sqrt{1-x^2}) y \bigr)
\right\} \\
& = c_0 + 2 \sum\nolimits_{n=1}^{|p|} c_n T_{2n}(x) \:
+ \\
&  \quad \!\! + 
H \bigl( 2x^2 - 1 + 2\imath x \sqrt{1-x^2}\, \bigr) +
H \bigl( 2x^2 - 1 - 2\imath x \sqrt{1-x^2}\, \bigr)  \, .
\end{align*}
Note now that $2z^2 - 1\pm 2\imath z \sqrt{1-z^2}\not \in [1,+\infty)$
whenever $|z|<1$. The function
$$
z\mapsto c_0 + 2 \sum\nolimits_{n=1}^{|p|} c_n T_{2n}(z)
+ 
H \bigl( 2z^2 - 1 + 2\imath z \sqrt{1-z^2}\, \bigr) +
H \bigl( 2z^2 - 1 - 2\imath z \sqrt{1-z^2}\, \bigr)  \, ,
$$
therefore, is analytic on the open unit disc and coincides with
$i_{p,\beta}$ on $\{z: |z|<1\}\cap \R = (-1,1)$. Thus $i_{p,\beta}$ is real-analytic on
$(-1,1)$, and in fact $i_{p,\beta}(x) = \sum_{n=0}^{\infty}
i_{p,\beta}^{(n)}(0) x^n/n!$ for all $x\in (-1,1)$.
\end{proof}

\begin{rem}
Since $t \mapsto x+\cos (2\pi t)$ does not change sign on $\T$ whenever $|x|>1$, it
is clear from (\ref{funca}) that the function $i_{p,\beta}$ is real-analytic on $\R
\setminus [-1,1]$ as well.
\end{rem}

For every $d\in \N$, define a non-empty open subset of $\R^d$ as
$$
\cE_d:= \left\{ u \in \R^d : \exists j \in \{1, \ldots , d\}\:
  \mbox{\rm with } |u_{j}|> \sum\nolimits_{k\ne j} |u_k|
\right\} \, .
$$
Geometrically, $\cE_d$ is the disjoint union of $2d$ open cones. For
example, $\cE_1 = \R \setminus \{0\}$ and $\cE_2 = \{u\in \R^2 :
|u_1| \ne  |u_2|\}$, hence $\cE_d$ is also dense in $\R^d$ for
$d=1,2$. For $d\ge 3$ this is no longer the case. In fact, a simple
calculation shows that 
$$
\frac{\mbox{Leb} (\cE_d \cap [-1,1]^d)}{\mbox{Leb} ([-1,1]^d)} =
\frac{2^d/\Gamma (d)}{2^d} = \frac1{\Gamma(d)} \,  , \quad \forall d \in \N
\, ,
$$
and so the (relative) portion of $\R^d$ taken up by $\cE_d$ decays
rapidly with growing $d$.

In order to utilize Lemma \ref{lema6} for a proof of Theorem
\ref{lemx}, given any $p_1, \ldots, p_d \in \Z$ and
$\beta \in \R$, recall the map $\Lambda_u$ from
(\ref{ea2}) and consider the integral
\begin{align}\label{ea7}
J= J(u)  & = \widehat{\lambda_{\T^d} \circ \Lambda_{u}^{-1}} (2) =
\int_{\T} e^{4\pi \imath t} \, {\rm d} (\lambda_{\T^d} \circ \Lambda_u^{-1})
(t) \nonumber \\
& = \int_{\T^d} e^{4\pi \imath (p_1 t_1 + \ldots p_d t_d + \beta \ln |u_1
  \cos (2\pi t_1) + \ldots + u_d \cos (2\pi t_d)|)} \, {\rm d}t \, . 
\end{align}
An important consequence of Lemma \ref{lema6} is

\begin{lem}\label{lema7}
For every $p_1, \ldots , p_d \in \Z$ and $\beta \in
\R\setminus \{0\}$, the function $u\mapsto J(u)$ given by
{\rm (\ref{ea7})} is
real-analytic and non-constant on each connected component of $\cE_d$. 
\end{lem}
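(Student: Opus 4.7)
I would reduce the $d$-dimensional integral (\ref{ea7}) to a one-dimensional integration that can be expressed via $i_{p_j,2\pi\beta}$, where $j$ is the unique dominant index of $u$ on its component. First observe that each connected component of $\cE_d$ is a convex open cone labelled by an index $j\in\{1,\dots,d\}$ and a sign $\epsilon\in\{\pm 1\}$ (with $\epsilon u_j>0$): indeed, if $|u_j|>\sum_{k\ne j}|u_k|$ then $|u_j|>|u_k|$ for every $k\ne j$, so only one such $j$ can exist, and $\epsilon$ is locally constant. Setting $v_k=u_k/u_j$ for $k\ne j$, pulling $u_j$ out of the logarithm, and using Fubini to integrate over $t_j$ first, one obtains
\begin{equation*}
J(u)\;=\;e^{4\pi\imath\beta\ln|u_j|}\,G(v)\, ,\quad
G(v)=\int_{\T^{d-1}}e^{4\pi\imath\sum_{k\ne j}p_kt_k}\,i_{p_j,2\pi\beta}\!\left(\sum_{k\ne j}v_k\cos(2\pi t_k)\right)\prod_{k\ne j}dt_k\, ,
\end{equation*}
with the argument of $i_{p_j,2\pi\beta}$ lying in $(-1,1)$ since $\sum_{k\ne j}|v_k|<1$ on the component. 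As $i_{p_j,2\pi\beta}$ is real-analytic on $(-1,1)$ by Lemma \ref{lema6} and its power series converges uniformly on compacta, interchanging sum and integral shows $G$ is real-analytic on $\{v\in\R^{d-1}:\sum|v_k|<1\}$; combined with the real-analyticity of $\ln|u_j|$ (sign of $u_j$ fixed), this yields real-analyticity of $J$ on the component.

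For non-constancy, once $G\not\equiv 0$ is established, I pick $v_0$ with $G(v_0)\ne 0$ and slide $u_j$ along the ray $u=u_j\bigl(e^{(j)}+\sum_{k\ne j}v_{0,k}e^{(k)}\bigr)\in\cE_d$ with $u_j\in \epsilon\R^+$: there $J(u)=|u_j|^{4\pi\imath\beta}G(v_0)$ traces a circle of positive radius in $\C$ because $\beta\ne 0$, so $J$ is non-constant. To prove $G\not\equiv 0$, I expand $i_{p_j,2\pi\beta}(x)=\sum_{n\ge 0}a_n x^n$ (noting $i_{p,\beta}$ is even, so $a_n=0$ for odd $n$) and the multinomial in $v$, collecting monomials:
\begin{equation*}
G(v)=\sum_\alpha a_{|\alpha|}\binom{|\alpha|}{\alpha}v^\alpha\prod_{k\ne j}c_{\alpha_k,p_k}\, ,\qquad c_{m,p}:=\int_\T e^{4\pi\imath pt}\cos(2\pi t)^m\,dt\, .
\end{equation*}
A short Fourier computation shows $c_{m,p}\ne 0$ precisely when $m$ is even and $m\ge 2|p|$. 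Hence $G\equiv 0$ would force, via the choice $\alpha_k=2|p_k|+2m_k$ with $m_k\in\N_0$ arbitrary, that $a_n=0$ for every even $n\ge N:=2\sum_{k\ne j}|p_k|$, and therefore (by evenness of $i_{p_j,2\pi\beta}$) for all $n\ge N$; in other words, $i_{p_j,2\pi\beta}$ would have to agree with a polynomial of degree less than $N$ on $(-1,1)$.

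The main obstacle is ruling out this polynomial scenario, and the decisive input is the Chebyshev-type expansion already derived inside the proof of Lemma \ref{lema6}: uniformly on $[-1,1]$,
\begin{equation*}
i_{p_j,2\pi\beta}(x)=c_0+2\sum_{n=1}^{\infty}c_n T_{2n}(x)\, ,\qquad c_n=e^{2\pi\imath\beta\ln 4}\,I_{|p_j|-n,2\pi\beta}\,I_{|p_j|+n,2\pi\beta}\, ,
\end{equation*}
and the explicit formula (\ref{ea5}) of Lemma \ref{lema2} yields $|I_{p,\beta}|^2>0$ for every $p\in\Z$ whenever $\beta\ne 0$, so $c_n\ne 0$ for all $n\ge 0$. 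Since this expansion is (via $x=\cos\pi t$) the unique Fourier series of the continuous function $t\mapsto i_{p_j,2\pi\beta}(\cos\pi t)$ on $\T$, having infinitely many non-zero coefficients is incompatible with $i_{p_j,2\pi\beta}$ being a polynomial on $(-1,1)$. This contradiction delivers $G\not\equiv 0$ and completes the proof.
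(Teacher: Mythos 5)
Your proof is correct and, for the real-analyticity and the initial reduction (pulling out the dominant coordinate $u_j$, rescaling, and integrating over $t_j$ first to produce the kernel $i_{p_j,2\pi\beta}$), it matches the paper's argument almost exactly. Where you depart is in establishing non-constancy, i.e.\ that $G\not\equiv 0$. The paper does this by first treating $d=2$ directly: it evaluates $\widetilde{J}$ at $u_1=0$ and $u_1=1$ and uses $|I_{p,\beta}|<1$ to show these two values differ, then for $d\ge 3$ bootstraps by observing that, were $i_{p,2\pi\beta}^{(2n)}(0)$ eventually zero, the $d=2$ function would be constant for $|p_1|$ large, contradicting the case just settled. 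You instead argue uniformly in $d$: you compute the power-series coefficients of $G$ via the exact vanishing pattern of $c_{m,p}=\int_\T e^{4\pi\imath pt}\cos(2\pi t)^m\,{\rm d}t$, note that $G\equiv 0$ would force all sufficiently high-order Taylor coefficients of $i_{p_j,2\pi\beta}$ at $0$ to vanish (so $i_{p_j,2\pi\beta}$ would be a polynomial on $[-1,1]$), and then contradict this by appealing to the Chebyshev--Fourier expansion from the proof of Lemma \ref{lema6}, whose coefficients are all non-zero by (\ref{ea5}). Both arguments draw on the same two underlying facts (the explicit Chebyshev coefficients of $i_{p,\beta}$ and the positivity in (\ref{ea5})); yours avoids the case split and the $|I_{p,\beta}|<1$ estimate at the cost of a multinomial bookkeeping step, and it packages the key obstruction conceptually as ``$i_{p,\beta}$ is not a polynomial,'' which is arguably cleaner. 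One small caveat: the line ``$a_n=0$ for every even $n\ge N$'' requires $d\ge 2$ so that there is at least one free index $k\ne j$ to vary; for $d=1$ the multi-index is empty and you only get $a_0=0$, but that case is anyway trivial since $G=a_0=I_{p_1,2\pi\beta}\ne 0$ directly from (\ref{ea5}).
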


\begin{proof}
If $d=1$ then, as seen in essence already in the proof of Lemma \ref{lema4},
$$
u_1 \mapsto J(u_1) = \int_{\T } e^{4\pi \imath p_1 t + 4 \pi \imath \beta \ln |u_1
  \cos (2\pi t)|} \, {\rm d}t = e^{4\pi i \beta \ln
  |u_1|} I_{p_1 , 2 \pi \beta}
$$
is real-analytic and non-constant on each of the two connected
parts of $\R \setminus \{0\} =
\cE_1$.

Assume in turn that $d\ge 2$. As the roles of $t_1, \ldots , t_d$ can
be interchanged in (\ref{ea7}), assume w.l.o.g.\ that $u_d \ne 0$. Since
$J(\pm u_1, \ldots , \pm u_d)= J(u_1, \ldots , u_d)$ for
all $u \in \R^d$ and every possible combination of $+$ and $-$
signs, and since also
$$
J(u) = e^{4\pi i \beta \ln |u_d|} J \left( 
  \frac{u_1}{u_d} , \ldots , \frac{u_{d-1}}{u_d}, 1\right) \, ,
$$
it suffices to show that $\widetilde{J} = \widetilde{J}(u):= J(u_1,
\ldots , u_{d-1}, 1)$ is real-analytic and non-constant on
$\widetilde{\cE}_{d-1}:= \{u \in \R^{d-1}:
\sum_{j=1}^{d-1}|u_j|<1\}$. To this end note first that
$$
\widetilde{J}(u) = \int_{\T^{d-1}} e^{4\pi \imath (p_1 t_1 + \ldots +
  p_{d-1}t_{d-1})}
i_{p_d, 2 \pi \beta} \bigl( 
u_1 \cos (2\pi t_1) + \ldots + u_{d-1} \cos (2\pi t_{d-1})
\bigr) \, {\rm d}t \, .
$$
With Lemma \ref{lema5} and the Dominated Convergence Theorem, it is
clear that $\widetilde{J}$ is continuous on $\R^{d-1}$. Recall from
the proof of Lemma \ref{lema6} that $i_{p, \beta}$ can be represented
by a power series, namely $i_{p,\beta}(x) = \sum_{n=0}^{\infty}
i_{p, \beta}^{(n)}(0) x^n/n!$ for all $p\in \Z$, $\beta \in \R$ and $|x|<1$. For every $u\in
\widetilde{\cE}_{d-1}$, therefore,
\begin{align}\label{eqamons}
\widetilde{J}(u) &  = 
\int_{\T^{d-1}} e^{4\pi \imath (p_1 t_1 + \ldots + p_{d-1}t_{d-1})}
\sum\nolimits_{n=0}^{\infty} \frac{i_{p_d, 2\pi \beta}^{(n)}(0)}{n!}
\left( 
\sum\nolimits_{j=1}^{d-1} u_{j} \cos (2\pi t_{j})
\right)^n \nonumber
\\[2mm]
& = \sum\nolimits_{n=0}^{\infty}  \frac{ i_{p_d, 2\pi \beta}^{(2n)} (0) }{2^{2n}}  \sum\nolimits_{|\nu| = n} \left\{
  \prod\nolimits_{j=1}^{d-1} \frac{u_j^{2\nu_j}}{(2\nu_j)!} \left(  \!\! \begin{array}{c} 2\nu_j \\ \nu_j \! + \! |p_j|  \end{array}
  \!\! \right) \right\} \, ,
\end{align}
where the standard notation for multi-indices $\nu = (\nu_1, \ldots ,
\nu_{d-1})\in (\N_0)^{d-1}$ has been used, see e.g.\
\cite[pp.25--29]{KP}. Thus $\widetilde{J}$ is real-analytic on
$\widetilde{\cE}_{d-1}$, by \cite[Prop.2.2.7]{KP}.

It remains to show that $\widetilde{J}$ is non-constant on
$\widetilde{\cE}_{d-1}$. Consider first the case of $d=2$, for which
(\ref{eqamons}) takes the form
\begin{equation}\label{ea9}
\widetilde{J}(u_1) 
= \sum\nolimits_{n=|p_1|}^{\infty}  \frac{ i_{p_2, 2\pi \beta}^{(2n)} (0) }{2^{2n}} \left(  \!\! \begin{array}{c} 2n \\  n \! + \! |p_1|  \end{array}
  \!\! \right) \frac{u_1^{2n}}{(2n)!} \, , \quad \forall u_1 \in
\widetilde{\cE}_1 = (-1,1) \, .
\end{equation}
Recall that $u_1 \mapsto \widetilde{J}(u_1)$ is continuous. If $p_1
\ne 0$ then $\widetilde{J}(0)=0$ whereas
\begin{align*}
\widetilde{J}(1) & = \int_{\T^2} e^{4\pi \imath (p_1 t_1 + p_2 t_2 + \beta \ln |
  \cos (2\pi t_1) + \cos (2\pi t_2)|)} \, {\rm d} t  \\[1mm]
& = \int_{\T^2} e^{4\pi \imath (p_1 (t_1 - t_2) + p_2 (t_1 + t_2) + \beta \ln |2
  \cos (2\pi t_1)  \cos (2\pi t_2)|)} \, {\rm d} t \\
& = e^{4\pi \imath
  \beta \ln 2} I_{p_1 + p_2 , 2\pi \beta} I_{p_1 - p_2 , 2 \pi\beta} \ne 0 \, ,
\end{align*}
since $\beta \ne 0$.  If, on the other hand, $p_1 = 0$ then $\widetilde{J}(0) = I_{p_2, 2\pi \beta}$,
while $\widetilde{J}(1) = e^{4\pi \imath \beta \ln 2} I_{p_2, 2\pi \beta}^2 \ne \widetilde{J}(0)$.
In either case, therefore, $u_1 \mapsto \widetilde{J}(u_1)$ is
non-constant on $\widetilde{\cE}_1=(-1,1)$. This concludes the proof for $d=2$.

Finally, to deal with the case of $d\ge 3$, note first that the above
argument for $d=2$ really shows
that, given any $p\in \Z$ and $\beta \in \R \setminus \{0\}$, the number $i_{p, 2\pi
  \beta}^{(2n)}(0)$ is non-zero for infinitely many $n\in
\N_0$. (Otherwise, by (\ref{ea9}), the function $u_1 \mapsto
\widetilde{J}(u_1)$ would be constant for $|p_1|$ sufficiently large, which
has just been shown not to be the case.) But then
$$
\widetilde{J}(u)  = \sum\nolimits_{n=|p_1| + \ldots +
  |p_{d-1}|}^{\infty} \frac{  i_{p_d, 2\pi \beta}^{(2n)} (0) }{2^{2n}} \sum\nolimits_{|\nu| = n} \left\{
  \prod\nolimits_{j=1}^{d-1} \frac{u_j^{2\nu_j}}{(2\nu_j)!} \left(  \!\! \begin{array}{c} 2\nu_j \\ \nu_j \! + \! |p_j|  \end{array}
  \!\! \right) \right\} 
$$
is obviously non-constant on $\widetilde{\cE}_{d-1}$.
\end{proof}

Given $p_1, \ldots , p_d \in \Z$ and $\beta \in \R$, denote by $\cD_d$
the set of all $u\in \R^d$ for which
$\lambda_{\T^d} \circ \Lambda_u^{-1}$ coincides with $\lambda_{\T}$,
i.e., let $
\cD_d =  \{u \in \R^d : \lambda_{\T^d} \circ \Lambda_u^{-1} =
\lambda_{\T}  \}$.
An immediate consequence of Lemma \ref{lema7} is

\begin{lem}\label{lema8}
For every $p_1, \ldots , p_d \in \Z$ and $\beta \in \R
\setminus \{0\}$ the set $\cD_d \cap \cE_d\subset \R^d$ is nowhere dense and has
Lebesgue measure zero.
\end{lem}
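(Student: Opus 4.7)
The plan is to reduce the statement to a fact about the zero set of the function $J$ from \eqref{ea7}, and then invoke Lemma \ref{lema7}.

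First, I would observe the elementary containment
\[
\cD_d \cap \cE_d \subset Z := \{u \in \cE_d : J(u) = 0\}.
\]
Indeed, if $u \in \cD_d$ then $\lambda_{\T^d} \circ \Lambda_u^{-1} = \lambda_{\T}$, so \emph{every} Fourier coefficient of $\lambda_{\T^d} \circ \Lambda_u^{-1}$ must agree with the corresponding Fourier coefficient of $\lambda_{\T}$; matching the $k=2$ coefficient gives $J(u) = \widehat{\lambda_{\T}}(2) = 0$. Thus it suffices to show that $Z$ is nowhere dense in $\R^d$ and has Lebesgue measure zero.

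Next, note that $\cE_d$ is the union of (exactly $2d$) open connected components, namely the open cones $C_j^\pm = \{u \in \R^d : \pm u_j > \sum_{k \ne j} |u_k|\}$. By Lemma \ref{lema7}, the restriction of $J$ to each connected component $C$ of $\cE_d$ is a real-analytic function that is not constant; in particular it does not vanish identically on $C$. I would then invoke the standard fact (see e.g.\ \cite[Prop.\ 0]{Mit} or \cite[Cor.\ 10]{KP}) that the zero set of a non-constant real-analytic function on a connected open subset of $\R^d$ is relatively closed with empty interior and has $d$-dimensional Lebesgue measure zero. Applying this to each $C$ and taking the finite union yields that $Z$ has Lebesgue measure zero, is relatively closed in $\cE_d$, and has empty interior.

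Finally, to upgrade empty interior in $\cE_d$ to nowhere-denseness in $\R^d$, observe that the closure $\overline{Z}$ in $\R^d$ satisfies $\overline{Z} \subset Z \cup \partial \cE_d$, because a point of $\overline{Z} \cap \cE_d$ lies in $Z$ (as $Z$ is relatively closed in the open set $\cE_d$). The boundary $\partial \cE_d$ is contained in the finite union of algebraic hypersurfaces $\{u \in \R^d : |u_j| = \sum_{k \ne j} |u_k|\}$ for $j = 1, \ldots, d$, hence is itself nowhere dense in $\R^d$ and of measure zero. As a finite union of nowhere-dense sets, $\overline{Z}$ is nowhere dense, completing the proof.

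The only real obstacle is the cited analytic fact that the vanishing locus of a non-constant real-analytic function on a connected open set has measure zero; everything else is bookkeeping involving the finitely many cones making up $\cE_d$ and the algebraic description of their common boundary. Since Lemma \ref{lema7} has already done the analytic work (non-constancy of $J$ on each component via explicit power-series computations), the present lemma is a short corollary.
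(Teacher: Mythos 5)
Your proof is correct and takes essentially the same approach as the paper: reduce to the containment $\cD_d\cap\cE_d\subset\{u\in\cE_d: J(u)=0\}$, invoke Lemma \ref{lema7} for real-analyticity and non-constancy of $J$ on each cone of $\cE_d$, and apply the standard fact about zero sets of non-constant real-analytic functions. The only difference is that you explicitly handle the passage from ``nowhere dense relative to $\cE_d$'' to ``nowhere dense in $\R^d$'' via the algebraic description of $\partial\cE_d$, a point the paper leaves implicit.
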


\begin{proof}
This is clear from the fact that $\cD_d \cap \cE_d \subset  \{u \in
\cE_d: J(u) = 0\}$. As $u\mapsto J(u)$ is
real-analytic and non-constant on each component of $\cE_d$, the zero-locus of $J$
on $\cE_d$ is nowhere dense and has Lebesgue measure zero; see e.g.\
\cite[Lem.19]{BHKR} or \cite[Sec.4.1]{KP}.
\end{proof}

At long last, the {\em Proof of Theorem \ref{lemx}\/} has become very simple: Since $\cD_d \cap \cE_d$ is nowhere dense, $\cE_d \setminus
\cD_d \ne \varnothing$, and $\lambda_{\T^d}\circ \Lambda_u^{-1} \ne
\lambda_{\T}$ for every $u\in \cE_d \setminus \cD_d$, by the 
definition of $\cD_d$. \hfill $\qed$

\begin{rem}\label{rema9}
(i) Since $\cE_1$ and $\cE_2$ are dense in $\R$ and $\R^2$,
respectively, the set $\cD_d$ is nowhere dense in $\R^d$ for
$d=1,2$ whenever $\beta \ne 0$. It may be conjectured that $\cD_d$ is
nowhere dense (and has Lebesgue measure zero) for $d\ge 3$ also; no
proof of, or counter-example to this conjecture is known to the authors.

(ii) Note that $\lambda_{\T^d}
\circ \Lambda_u^{-1} = \lambda_{\T }$ if, for some $j\in \{1,\ldots , d\}$, both $p_j \ne 0$
and $\beta u_j =0$. Thus
\begin{equation}\label{ea10}
\bigcup\nolimits_{j:p_j \ne 0} \left\{  u\in \R^d : \beta u_j = 0 \right\}
\subset \cD_d \, ,
\end{equation}
and hence for $\beta \ne 0$ the set $\cD_d$ contains the union of at most $d$
coordinate hyper-planes. Beyond the conjecture formulated in (i), it is tempting to speculate
whether in fact equality holds in (\ref{ea10}) always, i.e.\ for any
$p_1, \ldots , p_d\in \Z$ and $\beta \in \R$ ---
as it does for $\beta =0$ (trivial) and $d=1$ (Lemma \ref{lema4}).
Obviously, equality in (\ref{ea10}) would establish a much stronger
version of Theorem \ref{lemx}.

(iii) Even if the set $\cD_d \subset \R^d$ is indeed nowhere dense and has
Lebesgue measure zero for every $d\in \N$, as conjectured in (i), for
large values of $d$
the equality $\lambda_{\T^d}\circ \Lambda_u^{-1} = \lambda_{\T }$,
though generically false, is nevertheless often true approximately --- in some
sense, and quite independently of the specific values of $p_1, \ldots
, p_d \in \Z$ and $\beta \in
\R \setminus \{0\}$. Under mild conditions on these parameters, this
observation can easily be made rigorous as follows:
Assume, for instance, that the integer sequence $(p_n)$ is not
identically zero, say $p_1 \ne 0$ for convenience, and $\beta \ne
0$. Also assume that
\begin{equation}\label{ea13}
(u_n) \: \, \mbox{\rm  is a bounded sequence in} \: \, \R \,  \: \mbox{\rm with } \sum\nolimits_{n=1}^{\infty} u_n^2 = +\infty \, .
\end{equation}
If
$u_{1} =0$ then $\lambda_{\T^d} \circ \Lambda_u^{-1} =
\lambda_{\T }$ for all $d\in \N$. On the other hand, if $u_{1}
\ne 0$, let $\sigma_d := \sqrt{1 +
  \sum_{j=2}^d u_j^2}$ and observe that, for every $k\in
\Z \setminus \{0\}$,
\begin{align*}
 \widehat{\lambda_{\T^d}  \circ \Lambda_u^{-1}} (k)  & = e^{2\pi \imath k
   \beta \ln \sigma_d}  \cdot \\
& \quad \cdot  \int_{\T^d} 
e^{2\pi \imath k \left(p_{1} t_{1} + \sum_{j=2}^d p_j t_j + \beta
  \ln \left|u_1/ \sigma_d  \cos (2\pi t_1) + \sum_{j=2}^d u_j / \sigma_d
  \cos (2\pi t_j) \right|\right)} \, {\rm d} t \, .
\end{align*}
Since $\sigma_d \to +\infty$ as $d  \to  \infty$ yet $(u_n)$ is
bounded, it follows from the Central
Limit Theorem (see e.g.\ \cite[Sec.9.1]{CT}) that $\lim_{d\to \infty}
\widehat{\lambda_{\T^d} \circ \Lambda_u^{-1}} (k)  =  0$. Under the mild assumption
(\ref{ea13}), therefore, $
\lim\nolimits_{d \to \infty} \lambda_{\T^d} \circ \Lambda_u^{-1} =
\lambda_{\T } $ in $\cP (\T )$
in the sense of weak convergence of
probability measures. Informally put, the probability measure
$\lambda_{\T^d}\circ \Lambda_u^{-1}$ typically differs but little from
$\lambda_{\T}$ whenever $d$ is large.

(iv) The above {\em proof\/} of Theorem \ref{lemx} relies heavily on
specific properties of the logarithm, notably on the fact that $\ln
|xy| = \ln |x| + \ln |y|$ whenever $xy\ne 0$. It seems plausible,
however, that the {\em conclusion\/} of that theorem may remain valid if
the function $\ln |\cdot|$ in (\ref{ea2}) is replaced by virtually
any non-constant function that is real-analytic on $\R \setminus
\{0\}$ and has $0$ as a mild singularity. Establishing such a much
more general version of Theorem \ref{lemx} will likely require a
conceptual approach quite different from the rather computational
strategy pursued herein.
\end{rem}

\end{appendix}

\end{document}